\title[Model theory of Galois actions of torsion Abelian groups]{Model theory of Galois actions of torsion Abelian groups}
\author[\"{O}. BEYARSLAN]{\"{O}zlem Beyarslan$^{\clubsuit}$}
\thanks{$^{\clubsuit}$ Supported by the T\"{u}bitak 1001 grant no. 119F397.}
\address{$^{\clubsuit}$Bo\v{g}azi\c{c}i \"{U}niversitesi}
\email{ozlem.beyarslan@boun.edu.tr}
\author[P. KOWALSKI]{Piotr Kowalski$^{\spadesuit}$}
\thanks{$^{\spadesuit}$ Supported by the Narodowe Centrum Nauki grant no. 2018/31/B/ST1/00357 and by the T\"{u}bitak 1001 grant no. 119F397.}
\address{$^{\spadesuit}$Instytut Matematyczny\\
Uniwersytet Wroc{\l}awski\\
Wroc{\l}aw\\
Poland}
\email{pkowa@math.uni.wroc.pl} \urladdr{http://www.math.uni.wroc.pl/\textasciitilde pkowa/ }
\thanks{2010 \textit{Mathematics Subject Classification} Primary 03C60; Secondary 12H10, 11S20, 20E18.}
\thanks{\textit{Key words and phrases}. Difference field, Model companion, Pr\"{u}fer group, Frattini cover.}
\DeclareMathOperator{\cl}{cl}
\DeclareMathOperator{\gl}{GL} \DeclareMathOperator{\aut}{Aut} \DeclareMathOperator{\id}{id}
 \DeclareMathOperator{\fr}{Fr} 
  \DeclareMathOperator{\fix}{Fix}
  \DeclareMathOperator{\gal}{Gal}
 \DeclareMathOperator{\theo}{Th}\DeclareMathOperator{\alg}{alg}
\DeclareMathOperator{\coli}{\underrightarrow{\lim}}
\DeclareMathOperator{\li}{\underleftarrow{\lim}}
\DeclareMathOperator{\spec}{Spec}\DeclareMathOperator{\rat}{rat}
\DeclareMathOperator{\sep}{sep}
\DeclareMathOperator{\dcf}{DCF}
\newtheorem{theorem}{Theorem}[section]
\newtheorem{prop}[theorem]{Proposition}
\newtheorem{lemma}[theorem]{Lemma}
\newtheorem{cor}[theorem]{Corollary}
\newtheorem{fact}[theorem]{Fact}
\newtheorem{conj}[theorem]{Conjecture}
\theoremstyle{definition}
\newtheorem{definition}[theorem]{Definition}
\newtheorem{example}[theorem]{Example}
\newtheorem{remark}[theorem]{Remark}
\begin{document}
\newcommand{\lili}{\underleftarrow{\lim }}
\newcommand{\coco}{\underrightarrow{\lim }}
\newcommand{\twoc}[3]{ {#1} \choose {{#2}|{#3}}}
\newcommand{\thrc}[4]{ {#1} \choose {{#2}|{#3}|{#4}}}
\newcommand{\Zz}{{\mathds{Z}}}
\newcommand{\Ff}{{\mathds{F}}}
\newcommand{\Cc}{{\mathds{C}}}
\newcommand{\Rr}{{\mathds{R}}}
\newcommand{\Nn}{{\mathds{N}}}
\newcommand{\Qq}{{\mathds{Q}}}
\newcommand{\Kk}{{\mathds{K}}}
\newcommand{\Pp}{{\mathds{P}}}
\newcommand{\ddd}{\mathrm{d}}
\newcommand{\Aa}{\mathds{A}}
\newcommand{\dlog}{\mathrm{ld}}
\newcommand{\ga}{\mathbb{G}_{\rm{a}}}
\newcommand{\gm}{\mathbb{G}_{\rm{m}}}
\newcommand{\gaf}{\widehat{\mathbb{G}}_{\rm{a}}}
\newcommand{\gmf}{\widehat{\mathbb{G}}_{\rm{m}}}
\newcommand{\ka}{{\bf k}}
\newcommand{\ot}{\otimes}
\newcommand{\si}{\mbox{$\sigma$}}
\newcommand{\ks}{\mbox{$({\bf k},\sigma)$}}
\newcommand{\kg}{\mbox{${\bf k}[G]$}}
\newcommand{\ksg}{\mbox{$({\bf k}[G],\sigma)$}}
\newcommand{\ksgs}{\mbox{${\bf k}[G,\sigma_G]$}}
\newcommand{\cks}{\mbox{$\mathrm{Mod}_{({A},\sigma_A)}$}}
\newcommand{\ckg}{\mbox{$\mathrm{Mod}_{{\bf k}[G]}$}}
\newcommand{\cksg}{\mbox{$\mathrm{Mod}_{({A}[G],\sigma_A)}$}}
\newcommand{\cksgs}{\mbox{$\mathrm{Mod}_{({A}[G],\sigma_G)}$}}
\newcommand{\crats}{\mbox{$\mathrm{Mod}^{\rat}_{(\mathbf{G},\sigma_{\mathbf{G}})}$}}
\newcommand{\crat}{\mbox{$\mathrm{Mod}^{\rat}_{\mathbf{G}}$}}
\newcommand{\cratinv}{\mbox{$\mathrm{Mod}^{\rat}_{\mathbb{G}}$}}
\newcommand{\ra}{\longrightarrow}
\newcommand{\bdcf}{B-\dcf}
\makeatletter
\providecommand*{\cupdot}{%
  \mathbin{%
    \mathpalette\@cupdot{}%
  }%
}
\newcommand*{\@cupdot}[2]{%
  \ooalign{%
    $\m@th#1\cup$\cr
    \sbox0{$#1\cup$}%
    \dimen@=\ht0 %
    \sbox0{$\m@th#1\cdot$}%
    \advance\dimen@ by -\ht0 %
    \dimen@=.5\dimen@
    \hidewidth\raise\dimen@\box0\hidewidth
  }%
}

\providecommand*{\bigcupdot}{%
  \mathop{%
    \vphantom{\bigcup}%
    \mathpalette\@bigcupdot{}%
  }%
}
\newcommand*{\@bigcupdot}[2]{%
  \ooalign{%
    $\m@th#1\bigcup$\cr
    \sbox0{$#1\bigcup$}%
    \dimen@=\ht0 %
    \advance\dimen@ by -\dp0 %
    \sbox0{\scalebox{2}{$\m@th#1\cdot$}}%
    \advance\dimen@ by -\ht0 %
    \dimen@=.5\dimen@
    \hidewidth\raise\dimen@\box0\hidewidth
  }%
}
\makeatother

\def\Ind#1#2{#1\setbox0=\hbox{$#1x$}\kern\wd0\hbox to 0pt{\hss$#1\mid$\hss}
\lower.9\ht0\hbox to 0pt{\hss$#1\smile$\hss}\kern\wd0}

\def\ind{\mathop{\mathpalette\Ind{}}}

\def\notind#1#2{#1\setbox0=\hbox{$#1x$}\kern\wd0
\hbox to 0pt{\mathchardef\nn=12854\hss$#1\nn$\kern1.4\wd0\hss}
\hbox to 0pt{\hss$#1\mid$\hss}\lower.9\ht0 \hbox to 0pt{\hss$#1\smile$\hss}\kern\wd0}

\def\nind{\mathop{\mathpalette\notind{}}}

%\mbox{\rule{8pt}{8pt}}\vspace{0.3cm}\newline}
%\maketitle

\maketitle
\begin{abstract}
We show that the theory of Galois actions of a torsion Abelian group $A$ is companionable if and only if for each prime $p$, the $p$-primary part of $A$ is either finite or it coincides with the Pr\"{u}fer $p$-group. We also provide a model-theoretic description of the model companions we obtain.
\end{abstract}

\tableofcontents

\section{Introduction}
A \emph{difference field} is a field with an endomorphism. If this endomorphism is invertible, then a difference field is the same as an action of the group $(\Zz,+)$ by field automorphisms. Model theory of difference fields has been extensively studied for more than 20 years (see e.g. \cite{Mac1, acfa1, acfa2}). It is also natural to study model theory of fields with actions of an \emph{arbitrary} (fixed) group, instead of the infinite cyclic group. This topic had not been considered much until recently, we give a short account of earlier works below.
\begin{itemize}
\item Besides the theory ACFA corresponding to the action of $\Zz$, model theory of fields with free group actions has been also considered, which resulted in the theory ACFA$_n$, see e.g. \cite{Hr9}, \cite{KiPi}, \cite[Theorem 16]{Sjo}, and \cite[Proposition 4.12]{MS2}.

\item Actions of the group $\Zz\times \Zz$ were considered by Hrushovski, who proved that a model companion does not exist in this case (see \cite{Kikyo1}).

\item Actions of finite groups were considered first by Sj\"{o}gren in \cite{Sjo}.

\item Model theory of actions of $(\Qq,+)$ on fields were studied in \cite{med1}.
\end{itemize}
For a fixed group $G$, the first natural question to be considered is the following: does a model companion of the theory of fields with $G$-actions exist? In the examples given above, the corresponding model companions exist, except for the case of the group $\Zz\times \Zz$. If such a model companion exists, then we call this model companion $G$-TCF and we say that ``$G$-TCF exists''.

More recently, Daniel Hoffmann and the second author considered in \cite{HK3} the case of finite groups (being unaware then of Sj\"{o}gren's work from \cite{Sjo}). In \cite{BK}, the authors of this paper extended some of the results from \cite{acfa1} and \cite{HK3} into a very natural common context of \emph{virtually free} groups. This work is a continuation of the general line of research from \cite{BK}, however, it goes in a different direction, that is we consider infinite torsion Abelian groups. Let $A$ be a torsion Abelian group. This paper is almost exclusively devoted to the proof of the following result.
\begin{theorem}\label{superthm}
The theory $A-\mathrm{TCF}$ exists if and only if for each prime $p$, the $p$-primary part of $A$ is either finite or it is isomorphic with the Pr\"{u}fer $p$-group. Moreover, if the theory $A-\mathrm{TCF}$ exists, then it is simple; and it is strictly simple (that is: simple, not stable, and not supersimple) when $A$ is infinite.
\end{theorem}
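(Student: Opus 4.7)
The plan is to treat the two directions of the biconditional separately, exploiting the canonical decomposition $A = \bigoplus_p A_p$ of a torsion Abelian group into its $p$-primary components. The first step is to show that $A$-TCF exists if and only if $A_p$-TCF exists for every prime $p$: since the finite orders appearing in distinct primary components are coprime, a Galois $A$-action splits as a compatible family of Galois $A_p$-actions, and any obstruction to amalgamation has to be visible inside a single primary component. This reduces the whole problem to the $p$-primary case.

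For the non-existence direction, suppose $A_p$ is infinite but not isomorphic to the Pr\"ufer $p$-group $\Zz(p^\infty)$. By the classification of Abelian $p$-groups, such an $A_p$ either has infinite socle, and hence contains $\bigoplus_{\omega}\Zz/p$, or is not divisible and contains a cyclic summand $\Zz/p^n$ with an infinite complement. In either case I would adapt the Hrushovski-style obstruction used in \cite{Kikyo1} for $\Zz\times\Zz$: construct two Galois $A_p$-extensions of a common algebraically closed substructure which cannot be amalgamated inside a third. The ``extra'' subgroup structure that distinguishes $A_p$ from $\Zz(p^\infty)$ is exactly what lets one arrange conflicting constraints on the Galois action that existential closure would force to be compatible, thereby blocking the model companion.

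For the existence direction, assume every $A_p$ is finite or Pr\"ufer. The finite case is already handled by \cite{Sjo} and \cite{HK3}, so the new content is the Pr\"ufer case $A_p=\Zz(p^\infty)$. I would describe a Galois $\Zz(p^\infty)$-action as a coherent tower of Galois $\Zz/p^n$-actions and propose axioms in the spirit of ACFA: the field is algebraically closed, the Galois structure is as dictated by the tower (with the definable fixed subfields matching), and a geometric ACFA-type axiom produces existential closure. The crux is a Galois-theoretic amalgamation lemma: two such actions sharing an algebraically closed base admit a common extension. This is where the Pr\"ufer hypothesis enters, through projectivity of the universal Frattini cover of $\Zz(p^\infty)$ --- namely $\Zz_p$ in the pro-$p$ category --- which guarantees that partial Galois data always lift compatibly. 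Combining with the finite case and the coprime decomposition then yields $A$-TCF together with an explicit axiomatization.

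For the simplicity assertion I would identify non-forking independence in $A$-TCF with algebraic independence in the field reduct over the algebraic closure and verify the usual independence axioms. Non-stability for infinite $A$ follows because the subgroup tree of $A$ produces definable infinite linear orders, giving the order property; non-supersimplicity comes from the infinite descending chain of subgroups of $A$ yielding an infinite forking chain for a generic element. The principal obstacle I expect is the existence direction in the Pr\"ufer case: both formulating a genuinely workable geometric axiom that captures all first-order properties of the intended model companion, and proving the amalgamation lemma in the presence of infinite Galois-theoretic data, sit at the heart of the argument.
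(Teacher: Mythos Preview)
Your proposal has genuine gaps in both directions, and the overall architecture differs substantially from what the paper does.

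\textbf{The reduction to primary components.} You assert that $A$-TCF exists if and only if $A_p$-TCF exists for every $p$, on the grounds that ``any obstruction to amalgamation has to be visible inside a single primary component.'' This is not proved, and the paper does not take this route. In fact the paper's Remark~\ref{sectworem}(2) shows that even for finite groups the chain of model companions need not increase (e.g.\ $C_2$-TCF $\not\subseteq C_6$-TCF), so naive ``one prime at a time'' reasoning is delicate. The paper treats the full group $A$ throughout; the positive direction works with an increasing chain $A^{(m)}$ of subgroups whose $p$-primary parts are \emph{all} finite, and appeals to Proposition~\ref{mcfact} on chains of model companions.

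\textbf{Non-existence.} Your plan is to adapt Hrushovski's $\Zz\times\Zz$ obstruction to the forbidden subgroups $C_p^{(\omega)}$ and $C_p\oplus C_{p^{\infty}}$. But that obstruction (Theorem~\ref{hrthm}) hinges on two automorphisms of infinite order and a specific arithmetic configuration on third roots of unity; there is no evident analogue when every group element has $p$-power order. Section~\ref{secudi} of the paper explains precisely why this style of argument does not transport readily even to groups containing $\Zz\times\Zz$. The paper's actual non-existence proof is of a completely different nature: a structural theorem (Theorem~\ref{mgt}) showing that for any $A$-closed strict $A$-field the groups $\gal(K^{A_i})$ are pronilpotent, hence their $p$-Sylow pieces are free pro-$p$; then the profinite Nielsen--Schreier formula forces a rank contradiction under the ``bad'' hypothesis, producing for each $i$ a degree-$p$ $A_i$-extension of $K$, which by saturation yields a proper algebraic $A$-extension of an $A$-closed field. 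None of this is visible in your outline.

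\textbf{Existence.} You propose ACFA-style geometric axioms. The paper instead uses \emph{Galois axioms} (Definition~\ref{galaxioms}): the key Theorem~\ref{axioms} gives an explicit description of $\gal(K_n)$ for the intermediate subgroups $B_n$, and the backward implication is established not via a single amalgamation lemma but by building auxiliary fields $K'_{(n)}\models B_n$-TCF approximating $K$, together with a descent lemma for constructible sets over an intersection of PAC fields (Proposition~\ref{pacint}). Your Frattini-cover remark that $\widetilde{\Zz(p^\infty)}\cong\Zz_p$ is in the right spirit but is far from the full story.

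\textbf{Simplicity.} Your sketch is plausible in outline, but the paper gets simplicity for free from \cite[Corollary~4.31]{Hoff3}: $G$-TCF is simple iff the underlying field is bounded, and boundedness follows from the explicit Galois computations. Non-stability uses that $K$ is PAC and not separably closed; non-supersimplicity is exactly your descending-chain idea, realized via the tower of definable fixed subfields.
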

Regarding the question of the existence of the theory $G$-TCF for $G$ coming from a given class of groups, the theorem above is a rare instance of a situation when a \emph{full} answer is given. For example, we are still far from obtaining a corresponding answer for the class of all finitely generated groups: we showed in \cite{BK} that for virtually free groups the corresponding model companions exist, and we only conjectured in \cite{BK} the opposite implication (this conjecture is confirmed in \cite{BK} in the case of finitely generated commutative groups). It should be also noted that Theorem \ref{superthm} disproves our own  \cite[Conjecture 5.12]{BK}, that is $G$-TCF need not exist for a locally virtually free (even locally finite) group $G$.

Let us fix our (standard) notation here. We denote the set of all prime numbers by $\Pp$. For $n>0$, the cyclic group of order $n$ is denoted by $C_n$  and
$$C_{p^{\infty}}=\coli_{n}C_{p^n}$$
is the Pr\"{u}fer $p$-group. For any group $G$ and any ordinal number $\alpha$, by $G^{(\alpha)}$ we denote the direct sum of $\alpha$ copies of $G$. If a group $G$ acts on a set $X$, then by $X^G$ we denote the set of invariants of this action. For a field $K$, $\gal(K)$ denotes the (profinite) absolute Galois group of $K$, that is the group $\gal(K^{\sep}/K)$, where $K^{\sep}$ is the separable closure of $K$.

By a \emph{$G$-field}, we mean a field with an action of the group $G$ by field automorphisms. Similarly, we consider $G$-rings, $G$-field extensions, etc. By $L_G$, we denote the natural language of $G$-fields, where the elements of $G$ serve as unary function symbols.

\begin{remark}\label{equivcond}
We give here two conditions on an Abelian group $A$, which are equivalent to the condition appearing in the statement of Theorem \ref{superthm}.
\begin{enumerate}
\item The group $A$ does not contain (up to an isomorphism) any of the following two ``forbidden subgroups'':
\begin{itemize}
\item $C_p^{(\omega)}$,

\item $C_p\oplus C_{p^{\infty}}$.
\end{itemize}

\item There is no $p\in \Pp$ such that there exists an infinite strictly increasing sequence
$$C_p^2\cong P_1<P_2<P_3<\ldots,$$
where each $P_i$ is a finite $p$-subgroup of $A$.
\end{enumerate}
%\textbf{Similarities with $\Zz\times \Zz$ and finitely generated groups, but, of course, there we don't have any implication (and we don't believe in one of %them).}
\end{remark}
We would like to say a few words about the shape of the axioms of the theory $A$-TCF from the statement of Theorem \ref{superthm}. The axioms of ACFA from \cite{acfa1} are \emph{geometric}, that is they describe the intersections of algebraic varieties with the graph of the generic automorphism. In the case of a finite group, geometric axioms were used in \cite{HK3} as well, which was the main difference with the approach taken in \cite{Sjo}. Using the Bass-Serre theory, the geometric axioms from \cite{HK3} were ``glued'' in \cite{BK} to obtain geometric axioms for actions of arbitrary finitely generated virtually free groups. Let us now go back to the axioms from \cite{Sjo}. They are not geometric in the above sense, since they describe the properties of certain absolute Galois groups. This is why we call them \emph{Galois axioms} and we formalize this notion below. Before that, let us recall that a field $K$ is \emph{pseudo algebraically closed} (abbreviated \emph{PAC}), if each absolutely irreducible variety defined over $K$ has a $K$-rational point.
\begin{definition}\label{galaxioms}
We say that the theory of a $G$-field $K$ is \emph{axiomatised by Galois axioms}, if $G$ is the union of its finitely generated subgroups $(G_i)_{i\in I}$ (for convenience we assume that $0$ is a distinguished element of $I$ and $G_0=\{1\}$) such that the theory of the $G$-field $K$ is implied by the following statements:
\begin{enumerate}
\item the action of $G$ on $K$ is faithful (we say that the $G$-field $K$ is \emph{strict});

\item $K$ is a perfect field;

\item for each $i\in I$, $K^{G_i}$ is PAC;

\item for each $i\in I$, we have:
$$\gal\left(K^{G_i}\right)\cong \mathcal{G}_{i},$$
where $(\mathcal{G}_i)_{i\in I}$ is a fixed collection of small profinite groups.
\end{enumerate}
\end{definition}
Let us remark here that the definition of ``strict'' above extends \cite[Definition 2.2]{HK3} in the case of a finite group $G$.

Clearly, Items $(1)$ and $(2)$ are first-order. By \cite[Chapter 11.3]{FrJa}, Item $(3)$ is a first-order condition as well. Since the set of extensions of a field $F$ (inside a fixed algebraic closure of $F$) of
a fixed degree $n$ is ``uniformly definable'' in $F$ (see e.g. \cite[Remark 2.6(i)]{PiPolk}) and there are finitely many of them in the situation of Definition \ref{galaxioms} (by the smallness assumption), we see that Item $(4)$ is also first-order.
%\\
%\textbf{Any better reference [?2?, Fried-Jarden!!]?}
%\\
We would like to point out that in the case of a group $G$ which is not finitely generated, the field of constants $K^G$ is \emph{not} definable in the $G$-field $K$ (it is merely type-definable). Hence, there is not much chance for any statement about $\gal(K^G)$ to be first-order.

As we have said above, the theory $G-\mathrm{TCF}$ is axiomatised by Galois axioms for a finite $G$ (by \cite{Sjo}), which we will also point out in Proposition \ref{finitecr}. In this paper, we prove a version of this result for torsion Abelian groups satisfying the equivalent conditions from Remark \ref{equivcond}.

This paper is organized as follows. In Section \ref{secgen}, we collect some general results (originating often from \cite{Sjo}) about existentially closed $G$-fields and we also discuss Hrushovski's notion of pseudo-existentially closed $G$-fields. In Section \ref{secabsolute}, we show a crucial technical result about pronilpotency of certain absolute Galois groups. Section \ref{secneg} is concerned with the negative (or non-existence) results. More precisely, we show there the left-to-right implication from Theorem \ref{superthm}. Section \ref{secpos} is about the positive results, that is we show there the right-to-left implication from Theorem \ref{superthm}. In Section \ref{secmisc}, we collect several miscellaneous results and observations regarding the model theory of $G$-fields.

\section{General results about $G$-fields}\label{secgen}
In this section, we discuss Hrushovski's notion of pseudo-existentially closed $G$-fields and we also collect the results about existentially closed $G$-fields and PAC fields, which will be important in the sequel. We finish this section with some well-known results about chains of theories and we give examples of such chains in the case of group actions on fields.

\subsection{Pseudo-existentially closed $G$-fields and PAC fields}\label{pecsec}
The following notion we learnt from Udi Hrushovski (private communication). It originated from our attempts to show that if $G$ has a subgroup isomorphic to $\Zz\times \Zz$, then $G$-TCF does not exist (those attempts will be discussed in Section \ref{secudi}).
\begin{definition}
A $G$-field $F$ is \emph{pseudo-existentially closed} (abbreviated \emph{p.e.c.}), if for any $G$-field extension $F\subseteq K$ such that the pure field extension $F\subseteq K$ is regular, the $G$-field $F$ is existentially closed in the $G$-field $K$.
\end{definition}
We would like to point out that if $G=\{1\}$, then p.e.c. $G$-fields are exactly PAC fields (since being PAC is the same as being existentially closed in regular extensions, see \cite[Proposition 11.3.5]{FrJa}), which justifies the choice of the term ``pseudo-existentially closed'' above. We will also use in the sequel the abbreviation ``e.c.'' for ``existentially closed''. We recall that a $G$-field is \emph{$G$-closed}, if it has no proper algebraic (as pure fields) $G$-field extensions.

The crucial good property of p.e.c. $G$-fields is the result below, which is clearly false for e.c. $G$-fields (consider $H=\{1\}$). This result and its proof was pointed out to us by Udi Hrushovski.
\begin{prop}\label{udipec}
Suppose that $M$ is a p.e.c. $G$-field and $H\leqslant G$. Then $M$ is a p.e.c. $H$-field as well.
\end{prop}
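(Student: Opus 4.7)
The plan is to reduce the proposition to the p.e.c.\ property of $M$ as a $G$-field via an induced-representation construction. Given any $H$-field extension $M\subseteq L$ with $L/M$ regular, I would build a $G$-field extension $M\subseteq K$ with $K/M$ regular together with an $H$-equivariant $M$-algebra embedding $L\hookrightarrow K$. Granted such a $K$, any quantifier-free $L_H$-formula $\varphi(x)$ with parameters from $M$ is automatically a quantifier-free $L_G$-formula; if it has a witness in $L$ then via the embedding it has one in $K$, and the p.e.c.\ property of $M$ as a $G$-field then produces a witness in $M$, which is exactly what existential closedness of $M$ in $L$ as an $H$-field requires.

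For the construction, fix a system $\{g_i\}_{i\in I}$ of left coset representatives of $H$ in $G$ with $g_0=1$, and for each $g\in G$ and $i\in I$ write $g\,g_i = g_{\tau_g(i)}\, h_{g,i}$ with $\tau_g(i)\in I$ and $h_{g,i}\in H$ uniquely determined. For each $i$ let $L_i$ be a copy of $L$ regarded as an $M$-algebra via the twisted structure map $m\mapsto g_i^{-1}(m)$, and form
$$R \;=\; \bigotimes_{i\in I,\,M} L_i.$$
Define $\sigma_g\colon R\to R$ to send the $i$-th tensor factor to the $\tau_g(i)$-th via the ring map $\ell\mapsto h_{g,i}(\ell)$. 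The twist by $g_i^{-1}$ on the structure maps is chosen precisely so that $\sigma_g$ descends to the tensor product, $\sigma_{g'}\circ\sigma_g=\sigma_{g'g}$, and $\sigma_g|_M$ coincides with the given action of $g$ on $M$. Regularity of $L/M$ implies regularity of each $L_i/M$ (the structure map differs from the inclusion by the automorphism $g_i^{-1}$ of $M$), and a standard induction then shows that every finite subtensor is a domain whose fraction field is regular over $M$. Writing $R$ as the filtered union of such finite subtensors yields that $R$ is a domain and that $K:=\mathrm{Frac}(R)$ is a regular $G$-field extension of $M$; the inclusion $L=L_0\hookrightarrow K$ is $H$-equivariant because $g_0=1$ forces $h_{h,0}=h$ for every $h\in H$.

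The main obstacle is the bookkeeping for the twisted tensor product: verifying simultaneously that each $\sigma_g$ is a well-defined $M$-balanced endomorphism of $R$, that the family $\{\sigma_g\}_{g\in G}$ is a group action, and that it extends the original $G$-action on $M\subseteq R$. Once these compatibilities are in place, the remaining steps — domainness of $R$, regularity of $K/M$, and the transfer of existential satisfaction from $L$ to $M$ through the $H$-equivariant embedding $L\hookrightarrow K$ via the p.e.c.\ hypothesis on $M$ — are routine.
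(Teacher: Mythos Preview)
Your proposal is correct and follows essentially the same tensor-induction construction as the paper: both build $R=\bigotimes_{G/H}L$ over $M$, extend the $G$-action by permuting factors along $g g_i=g_{\tau_g(i)}h_{g,i}$, and pass to the fraction field. The only cosmetic difference is that the paper realizes the factors as explicit copies $rK$ carrying the untwisted inclusion of $F$, whereas you keep a single copy of $L$ with the $M$-algebra structure twisted by $g_i^{-1}$; you are also slightly more explicit than the paper in checking that the resulting extension of $M$ is regular, which is indeed needed to invoke the p.e.c.\ hypothesis.
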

\begin{proof}
Assume that $F$ is a $G$-field and $F\subseteq K$ is an $H$-field extension, which is regular (as an extension of pure fields). To conclude the proof, it is enough to construct a field extension of $K\subseteq L$ and an action of $G$ on $L$ such that $F\subseteq L$ is a $G$-field extension and $K\subseteq L$ is an $H$-field extension.

Let $W$ of be a set of representatives for $G/H$ such that $1\in W$. For each $r\in W$ we fix a set $rK$ such that:
\begin{enumerate}
\item[(i)] $1K=K$;

\item[(ii)] for all $r,r'\in W$, if $r\neq r'$, then $rK\cap r'K=F$;

\item[(iii)] there is a bijection (denoted by $r$ as well) $r:K\to rK$ such that for all $x\in F$, we have $r(x)=r\cdot x$ (the action of $G$ on $F$).
\end{enumerate}
For any $g\in G$ and $r\in W$, there are unique $r'\in W$ and $h\in H$ such that $gr=r'h$ and we define a bijection $g:rK\to r'K$ by the following commutative diagram:
\begin{equation*}
 \xymatrix{ rK \ar[rr]^{g} \ar[d]_{r^{-1}}   & &  r'K \\
K  \ar[rr]^{\cdot h}  & &   K \ar[u]_{r'},}
\end{equation*}
where $\cdot h$ comes from the given action of $H$ on $K$. Let us also define:
$$Z:=\bigcup_{r\in W}rK.$$
It is easy to see that the above diagram defines an action of $G$ on the \emph{set} $Z$, which extends the action of $H$ on $K$ and the action of $G$ on $F$. For each $r\in W$, we define a field structure on $rK$ in such a way that $r:K\to rK$ is a field isomorphism. Then for each $g,r,r'$ as above, the map $g:rK\to r'K$ is a field isomorphism as well.

We define now:
$$R:=\bigotimes_{r\in W}rK,$$
where the tensor product is taken over the field $F$. By the universal property of the tensor product (in the category of $F$-algebras), the action of $G$ on $Z$ uniquely extends to an action of $G$ on $R$ by ring automorphisms. Since the field extension $F\subseteq K$ is regular, the ring $R$ is a domain by \cite [Proposition 2 in \S 17, A.V.141]{bourbalgebra}. Hence, we can take as $L$ the field of fractions of $R$ with the induced action of $G$.
\end{proof}
\begin{remark}\label{pecobs}
We collect here several observations about e.c., p.e.c, and $G$-closed fields. Let $H\leqslant G$.
\begin{enumerate}
\item[(0)] Any $G$-closed
$G$-field is perfect \cite[proof of Theorem 1]{Sjo} and if $K$ is a perfect $G$-field then $K^G$
is also perfect \cite[Lemma 3.2]{HK3}. 

\item We will give a quick argument showing that a $G$-field $K$ is e.c. if and only if $K$ is both p.e.c. and $G$-closed. 
\\
The left-to-right implication is clear (two weakenings). For the right-to-left implication, we assume that $K$ is a $G$-closed $G$-field and $K\subseteq M$ is a $G$-field extension. It is enough to show that the field extension $K\subseteq M$ is regular. By Item $(0)$ above, $K$ is perfect, hence this extension is separable. Since $K$ is $G$-closed, $K$ is relatively algebraically closed in $M$ (note that this relative algebraic closure is a $G$-subfield of $M$), therefore the field extension $K\subseteq M$ is regular indeed.

\item It is also clear that if $K$ is a $G$-field, which is $H$-closed, then $K$ is $G$-closed as well.

\item Proposition \ref{udipec} says that the opposite happens with the notion of p.e.c:
if $K$ is a p.e.c. $G$-field, then $K$ is a p.e.c. $H$-field.

\item It is easy to find examples of e.c. $G$-fields, which are not e.c. $H$-fields (taking $H=\{1\}$).

\item We still do not know whether the existence of $G$-TCF implies the existence of $H$-TCF (see Conjecture \ref{zzconj} for a special case).

\item The notion of a p.e.c. $G$-field and Proposition \ref{udipec} should generalize to the context of an arbitrary theory  (instead of the theory of fields) having a stable completion, which is the context considered in \cite{Hoff3}.
\end{enumerate}
\end{remark}
The following result will be crucial in the sequel.
\begin{prop}\label{perfpac}
If $K$ is a p.e.c. $G$-field and $G$ is finitely generated, then the field $K^G$ (the field of constants) is PAC.
\end{prop}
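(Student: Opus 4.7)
The plan is to take an arbitrary geometrically integral affine variety $V$ defined over $F := K^G$ and produce a point $x \in V(F)$, by leveraging the p.e.c.\ property over a carefully chosen $G$-field extension of $K$. Say $V \subseteq \mathbb{A}^n_F$ is cut out by polynomials $f_1,\ldots,f_m \in F[X_1,\ldots,X_n]$, so that ``$x \in V$'' is a quantifier-free $L_G$-formula with parameters from $F$.

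First I would form the $F$-algebra $R := K \otimes_F F(V)$. Geometric integrality of $V$ means the extension $F \subseteq F(V)$ is regular; since regular extensions are stable under base change, $R$ is an integral domain, and its fraction field $L$ provides a regular pure field extension $K \subseteq L$. I extend the $G$-action on $K$ to $R$ (and hence to $L$) by letting $G$ act trivially on the tensor factor $F(V)$; this gives a $G$-field extension $K \subseteq L$ whose underlying pure field extension is regular. By the p.e.c.\ hypothesis, $K$ is then existentially closed in $L$ as a $G$-field.

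The key observation is that the tautological generic point $\eta \in V(F(V)) \subseteq V(L)$ is fixed by all of $G$, because $G$ was arranged to act trivially on $F(V)$. Here is where finite generation of $G$ enters: fixing generators $g_1,\ldots,g_k$ of $G$, the condition
\[
\varphi(x) \;:=\; \Bigl(\bigwedge_{j=1}^m f_j(x)=0\Bigr) \wedge \Bigl(\bigwedge_{i=1}^k g_i(x) = x\Bigr)
\]
is a quantifier-free $L_G$-formula over $F$, and is witnessed in $L$ by $\eta$. Existential closure then yields a witness $x \in K$, which automatically satisfies $x \in V(K) \cap \mathrm{Fix}(G) = V(K^G) = V(F)$.

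The only delicate point is precisely this reduction of ``$x \in K^G$'' to a finite conjunction of equations, which is what finite generation of $G$ buys us; without it, being $G$-fixed is merely type-definable, not definable, and the existential closure argument would break down. Everything else is a formal unwinding of the definitions together with the standard preservation of regularity of field extensions under base change.
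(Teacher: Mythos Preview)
Your proof is correct and follows essentially the same approach as the paper's: both form the tensor product $K\otimes_F F(V)$, equip it with the $G$-action trivial on $F(V)$, observe that the generic point of $V$ lies in $V(L^G)$, and use finite generation of $G$ to express ``$V(L^G)\neq\emptyset$'' as a first-order statement to which the p.e.c.\ hypothesis applies. The paper's proof is slightly terser but there is no substantive difference.
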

\begin{proof}
Let us denote $K^G$ by $C$, and we take an absolutely irreducible variety $V$ over $C$. Then, the field extension $C\subseteq C(V)$ is regular (see \cite[Section 10.2]{FrJa}). Therefore, the ring $R:=C(V)\otimes_CK$ is a domain (again by \cite [Proposition 2 in \S 17, A.V.141]{bourbalgebra}). We define a $G$-ring structure on $R$ such that $K\subseteq R$ is a $G$-ring extension and $R^G=C(V)$ in the obvious way. Then $V(R^G)\neq \emptyset$, since there is an $R^G$-rational point corresponding to the identity map. Let $L$ be the fraction field of $R$. Then  the $G$-action on $R$ extends uniquely to a $G$-action on $L$ by field automorphisms.

Therefore, we have:
\begin{itemize}
\item the extension $K\subseteq L$ is a $G$-field extension;

\item the field extension $K\subseteq L$ is again regular, since $L\cong_KK(V_K)$ and $V_K$ is absolutely irreducible, where we consider the base change variety here: $V_K:=V\times_{\spec(C)}{\spec(K)}$;

\item the statement ``$V(L^G)\neq \emptyset$'' is first-order (since $G$ is finitely generated).
\end{itemize}
Since $K$ is a p.e.c. $G$-field, we obtain that $V(K^G)\neq \emptyset$, which finishes the proof.
\end{proof}
\begin{cor}\label{constpac}
Suppose that $K$ is a p.e.c $G$-field and $H\leqslant G$ is a finitely generated subgroup. Then the field $K^H$ is PAC. In particular, $K=K^{\{1\}}$ is a PAC field.
\end{cor}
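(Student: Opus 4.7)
The plan is to combine the two preceding propositions directly. By Proposition \ref{udipec}, since $K$ is a p.e.c. $G$-field and $H\leqslant G$, we immediately obtain that $K$ is a p.e.c. $H$-field. This is the substance of Proposition \ref{udipec}, whose whole point (as remarked after the proposition) is that the p.e.c.\ property restricts well to subgroups, in contrast to the e.c.\ property.

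Next, since $H$ is finitely generated by hypothesis, Proposition \ref{perfpac} applies to the p.e.c.\ $H$-field $K$, yielding that $K^H$ is PAC. The only thing to check is that the finite generation assumption of Proposition \ref{perfpac} is what we have available, which is exactly the hypothesis on $H$ in the corollary.

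For the ``in particular'' part, take $H=\{1\}\leqslant G$, which is certainly finitely generated; then $K^H=K$ is PAC by the preceding argument. The only potential subtlety worth a sentence is that the statement ``$V(L^G)\neq\emptyset$'' used in Proposition \ref{perfpac} was first-order precisely because $G$ was finitely generated, so there is no hidden issue in transferring from $G$ to the subgroup $H$: we re-run Proposition \ref{perfpac} with $H$ in place of $G$ and with the $H$-field structure on $K$ supplied by Proposition \ref{udipec}. No real obstacle remains; the corollary is an immediate two-line combination of the two preceding propositions.
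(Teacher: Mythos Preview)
Your proof is correct and follows exactly the same route as the paper: apply Proposition~\ref{udipec} to pass from the p.e.c.\ $G$-field $K$ to a p.e.c.\ $H$-field, then invoke Proposition~\ref{perfpac} using that $H$ is finitely generated. The paper's proof is literally the one-line ``It follows directly from Propositions~\ref{udipec} and~\ref{perfpac},'' so there is nothing to add.
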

\begin{proof}
It follows directly from Propositions \ref{udipec} and \ref{perfpac}.
\end{proof}
\begin{remark}
We would like to comment here how the results of this subsection are related to \cite{Sjo}.
\begin{enumerate}
\item It is stated in \cite[Theorem 2]{Sjo} that if $G$ is an \emph{arbitrary} group and $K$ is an existentially closed $G$-field, then $K^G$ is PAC. The proof of  \cite[Theorem 2]{Sjo} is basically the same as the proof of Proposition \ref{perfpac} above, and we believe that one still has to assume that $G$ is finitely generated for this proof to work (although, we do not have a counterexample for the statement with an arbitrary group $G$).

\item In \cite[Theorem 3]{Sjo}, it is stated that if $K$ is an existentially closed $G$-field, then $K$ is PAC. Corollary \ref{constpac} is more general and its proof is simpler, since it is using Proposition \ref{udipec}.
\end{enumerate}
 \end{remark}

\subsection{Existentially closed $G$-fields}\label{galax}
In this subsection, we go through several results which originally appeared in \cite{Sjo}, namely: Theorems 4, 5, and 6 there. We do it for the sake of completeness and
%several reasons. Firstly, the working assumption on the group $G$ in \cite{Sjo} is: finitely generated and finitely presented, which is not a satisfying %generality for us. Secondly,
we would like to discuss some issues concerning \cite[Theorem 6]{Sjo} as well.
%\begin{remark}
%It is {\bf really?} not explicitly written in \cite{Sjo}, but the finite generation/presentation is actually needed to axiomatize the class of $G$-closed %fields. Our arguments (later in the paper) probably yield that e.g. the class of $C_p^{\oplus \omega}$-closed fields is \emph{not} axiomatizable.
%\end{remark}

Suppose that $K$ is a $G$-field, $C=K^G$, and
$$\varphi:G\ra \aut(K/C)$$
corresponds to the action of $G$ on $K$.
%\begin{lemma}\label{reg}
%If $K$ is perfect, then the field extension $C^{\alg}\cap K\subseteq K$ is regular.
%\end{lemma}
%\begin{proof}
%Since the field $K$ is perfect, the field $C^{\alg}\cap K$ is also perfect, so the extension $C^{\alg}\cap K\subseteq K$ isegular.
%\end{proof}
For a group $G$, we denote by $\widehat{G}$ the profinite completion of $G$. For a profinite group $\mathcal{G}$, we denote by $\widetilde{\mathcal{G}}$ the universal Frattini cover of $\mathcal{G}$ (see \cite[Chapter 22]{FrJa}).  For a cardinal number $\kappa$ and $p\in \Pp$, we denote by $\widehat{F}_{\kappa}(p)$ the free pro-$p$ group of rank $\kappa$ (see \cite[Remark 17.4.7]{FrJa}).

The lemma below originates from \cite[Theorem 4]{Sjo}. It is also related to \cite[Proposition 5.1]{Hoff4}, where a version of this lemma is proved in a more general context (see Remark \ref{pecobs}(6)).
\begin{lemma}\label{contepi}
There is a unique continuous epimorphism:
$$\alpha:\widehat{G}\ra \gal\left(\left(C^{\alg}\cap K\right)/C\right)$$
such that the following diagram is commutative:
\begin{equation*}
 \xymatrix{ \widehat{G} \ar[rr]^{\alpha\ \ \ \ \ \ }  &  &  \gal(C^{\alg}\cap K/C) \\
G \ar[u]_{\iota} \ar[rr]^{\varphi\ \ \ \ \ \ } &  &   \aut(K/C) \ar[u]_{\mathrm{res}}.}
\end{equation*}
Moreover, if $(K,G)$ is p.e.c. and $G$ is finitely generated, then the map $\alpha$ is an isomorphism.
\end{lemma}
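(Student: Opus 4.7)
The plan is to construct $\alpha$ from the $G$-action on $L := C^{\mathrm{alg}} \cap K$, verify surjectivity by density, and, for the moreover part, realize every finite quotient of $\widehat{G}$ as a Galois group of a subextension of $L/C$ by combining a ``generic'' regular $G$-field extension of $K$ with p.e.c.\ transfer.

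First I would check that $L/C$ is Galois. The field $L$ is $G$-stable (as $G$ fixes $C$ and preserves $C$-algebraicity), and for $x\in L$ the orbit $Gx$ is finite (contained in the roots of the minimal polynomial of $x$ over $C$), so $P_x(T) := \prod_{y\in Gx}(T-y)\in L[T]$ is $G$-invariant, lies in $L^G[T]=C[T]$, is separable, and splits in $L$; this places each $x$ inside a finite Galois subextension. Restriction yields $\varphi_L:G\to\mathrm{Gal}(L/C)$, and since the target is profinite, the universal property of $\widehat{G}$ extends $\varphi_L$ uniquely and continuously to $\alpha:\widehat{G}\to\mathrm{Gal}(L/C)$; the displayed diagram commutes tautologically. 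For surjectivity, $\varphi_L(G)$ has fixed field $L^G = L\cap C = C$, so it is dense in $\mathrm{Gal}(L/C)$ by infinite Galois theory, and $\alpha(\widehat{G})$ is closed as the continuous image of a compact space in a Hausdorff space.

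For the moreover part, fix a finite-index normal subgroup $U\trianglelefteq G$, set $Q:=G/U$, let $L^U\subseteq L$ be the fixed field of the image of $U$ in $\mathrm{Gal}(L/C)$, and let $M_U\leqslant\widehat{G}$ be the corresponding open normal subgroup. The map $G/U\to\mathrm{Gal}(L^U/C)$ induced by $\alpha$ is automatically surjective (its image is dense and the target is finite), with kernel $\ker(\alpha)\cdot M_U/M_U$. Hence once we show $[L^U:C] = |Q|$ for every such $U$, we get $\ker\alpha\subseteq M_U$ for every $U$, so $\ker\alpha\subseteq\bigcap_U M_U = \{1\}$. Equivalently, I must exhibit, for every $g\in G\setminus U$, an element $m\in L^U$ with $g(m)\neq m$.

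Such an $m$ is supplied by a regular $G$-field extension. Adjoin variables $\{y_q\}_{q\in Q}$ algebraically independent over $K$, put $K':=K(y_q:q\in Q)$, and extend the $G$-action by keeping the original action on $K$ and setting $g'\cdot y_q := y_{\overline{g'}q}$, where $\overline{g'}$ is the image of $g'$ in $Q$. The extension $K\subseteq K'$ is regular (purely transcendental). Taking $m := y_1$, the coefficients of $\prod_{q\in Q}(T-y_q)$ are $Q$-symmetric polynomials in the $y_q$'s with integer coefficients, hence $G$-invariant, so $m$ is algebraic of degree $\leq |Q|$ over $(K')^G$; moreover $u(m) = m$ for $u\in U$ (since $\overline{u}=1$), while $g(m) = y_{\overline{g}}\neq m$ because $\overline{g}\neq 1$. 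Since $G$ is finitely generated, so is $U$ (finite index in a finitely generated group); writing out ``$m\in (K')^U$'' and ``$m$ is algebraic of degree $\leq |Q|$ over $(K')^G$'' using generators of $U$ and of $G$ turns the existence of such $m$ into a single existential $L_G$-sentence that holds in $K'$, and the p.e.c.\ hypothesis transfers it to $K$. The main obstacle is engineering $K'$: one must simultaneously secure regularity of $K'/K$, a $G$-action on $K'$ factoring through $G\to Q$ on the new variables, and algebraicity of the witness over the new constants, all without leaving the existential fragment; the symmetrization $\prod_q(T-y_q)$ is the device that reconciles these requirements.
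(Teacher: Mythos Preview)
Your argument is correct and takes essentially the same route as the paper: both obtain $\alpha$ from the universal property of $\widehat{G}$, verify surjectivity via density of the image of $G$, and for the moreover part both pass to the purely transcendental $G$-extension $K(y_q:q\in Q)$ with $G$ acting through the regular representation of $Q$, then use p.e.c.\ to transfer an existential statement back to $K$. The one difference worth noting is in the final deduction: the paper concludes by arguing that $\widehat{G}$ and $\gal(L/C)$ have the same finite quotients and then invokes smallness of $\widehat{G}$, whereas you show directly that the induced map $\widehat{G}/M_U\to\gal(L^U/C)$ is a bijection (by matching orders), giving $\ker\alpha\subseteq\bigcap_U M_U=\{1\}$ without appealing to the fact that small profinite groups are determined by their finite quotients---a slightly more elementary finish.
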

\begin{proof}
Let us consider a finite Galois extension $C\subseteq C'$ such that $C'\subseteq K$. We consider the restriction map:
$$r:G\ra \gal(C'/C).$$
It is enough to show that the map $r$ is onto. Let $H:=r(G)$ and $C_0:=(C')^{H}$. Then $G$ acts trivially on $C_0$, hence $C_0\subseteq C$ and $H=\gal(C'/C)$, which we needed to show.

For the moreover part, it is shown in the proof of \cite[Theorem 4]{Sjo} (in the e.c. case) that $G$ and $\gal(C^{\alg}\cap K/C)$ have the same finite quotients, which is enough (see \cite[Corollary 16.10.8]{FrJa}), since $\widehat{G}$ is small being topologically finitely generated, which follows from the assumption that $G$ is finitely generated. We sketch below the argument given in \cite{Sjo}. Let $\pi:G\to H$ be an epimorphisms and $\iota:H\to S_m$ be an embedding, where $S_m$ is the symmetric group on $m=|H|$ generators ($m$ is finite). Then the field of rational functions $K':=K(t_1,\ldots,t_m)$ has a natural structure of a $G$-extension of $K$ (given by $\iota\circ \pi$). It is easy to see that
$$C(t_1,\ldots,t_m)^H=\left(K'\right)^G,$$
hence the field of constants of $K'$ has a Galois extension with Galois group isomorphic to $H$. Since this last condition is first-order ($G$ is finitely generated), $K$ is p.e.c., and the extension $K\subset K'$ is regular, the result follows.
\end{proof}
We recall (see \cite[Definition 22.5.1]{FrJa})) that a continuous epimorphism of profinite groups $f:\mathcal{G}\to \mathcal{H}$ is a \emph{Frattini cover}, if for any closed subgroup $\mathcal{G}_0\leqslant \mathcal{G}$, we have that $\mathcal{G}_0=\mathcal{G}$ if and only if $f(\mathcal{G}_0)=\mathcal{H}$. We give here a connection between Frattini covers and extensions of group actions. It is just a reformulation of \cite[Lemma 3.7]{HK3} (see also \cite[Remark 3.8]{HK3}).
\begin{lemma}\label{fraextbest}
We assume that
\begin{itemize}
\item $C\subseteq K$ is a finite Galois extension and $G=\gal(K/C)$;

\item $\mathcal{G}_0\leqslant \gal(C)$ is a closed subgroup;

\item $C':=\left(C^{\sep}\right)^{\mathcal{G}_0}$;

\item $K':=C'K=\left(C^{\sep}\right)^{\mathcal{G}_0\cap \gal(K)}$.
\end{itemize}
Then the following are equivalent:
\begin{enumerate}
\item $K\subseteq K'$ is a $G$-field extension, where the $G$-field structure on $K'$ is given by $\gal(K'/C')$;

\item $\mathrm{res}(\mathcal{G}_0)=G=\gal(K/C)$, where the map
$$\mathrm{res}:\gal(C)\ra G$$
is the restriction epimorphism.
\end{enumerate}
Moreover, if any of the equivalent two conditions above holds, then we have:
$$[K':K]=[\gal(C):\mathcal{G}_0].$$
\end{lemma}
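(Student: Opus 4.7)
The strategy is to reduce both (1) and (2) to the single condition $K \cap C' = C$ via standard Galois theory of composita. Since $C \subseteq K$ is a finite Galois extension and $C \subseteq C' \subseteq C^{\sep}$, the compositum $K' = KC'$ is Galois over $C'$, and the classical restriction map
\[
r \colon \gal(K'/C') \longrightarrow \gal(K/C) = G
\]
is injective with image $\gal(K/(K \cap C'))$; in particular $r$ is an isomorphism if and only if $K \cap C' = C$. To translate (2), I would observe that the quotient map $\mathrm{res} \colon \gal(C) \twoheadrightarrow G$ sends $\mathcal{G}_0 = \gal(C^{\sep}/C')$ onto the subgroup of $G$ fixing $K \cap C'$, namely $\gal(K/(K \cap C'))$; hence $\mathrm{res}(\mathcal{G}_0) = G$ is equivalent to $K \cap C' = C$.

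For (1), I would interpret the phrase ``the $G$-field structure on $K'$ is given by $\gal(K'/C')$'' as transporting the tautological action of $\gal(K'/C')$ on $K'$ to $G$ along the restriction $r$. With this convention, demanding that $K \subseteq K'$ be a $G$-field extension is exactly demanding that $r$ be a bijection, for then the induced action of $G$ on $K'$ restricts on $K$ to the originally given action of $\gal(K/C)$, and conversely any faithful realisation of this data forces $r$ to be an isomorphism. By the previous paragraph this is again the condition $K \cap C' = C$, so (1) and (2) are equivalent.

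For the moreover part, once $K \cap C' = C$ holds, $r$ is an isomorphism and $[K':C'] = |G| = [K:C]$; multiplicativity of degrees then yields $[K':K] = [C':C]$, and the Galois correspondence between closed subgroups of $\gal(C)$ and subfields of $C^{\sep}/C$ gives $[C':C] = [\gal(C) : \mathcal{G}_0]$, as ordinary natural numbers when $\mathcal{G}_0$ is open and as supernatural numbers in general. The only delicate point in the whole argument is pinning down the meaning of condition (1) — that is, specifying how the abstract group $G$ acts on $K'$ — and once this is fixed as above, the rest is routine bookkeeping inside the Galois correspondence for $C^{\sep}/C$.
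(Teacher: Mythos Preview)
Your proposal is correct and follows essentially the same approach as the paper: both of you reduce conditions (1) and (2) to the assertion that the restriction map $r\colon \gal(K'/C')\to\gal(K/C)$ is an isomorphism. The paper's proof is a one-line remark to this effect and does not spell out the intermediate condition $K\cap C'=C$ or the moreover clause, so your write-up is in fact more detailed than the original while remaining entirely in the same spirit.
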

\begin{proof}
We just point out here that both the conditions are equivalent to the fact that the restriction map:
$$\mathrm{res}:\gal(K'/C')\ra \gal(K/C)$$
is an isomorphism, so $K'$ gets the $G$-field structure (extending the one on $K$) using this restriction isomorphism.
\end{proof}
We will use several times the following consequence of an implication from Lemma \ref{fraextbest}.
\begin{prop}\label{fraexcolim}
Suppose that we have a tower of fields $K_2\subseteq K_1\subseteq K$ such that $K/K_1$ and $K/K_2$ are finite Galois extensions and we set:
$$H:=\gal(K/K_1),\ \ \ \  G:=\gal(K/K_2).$$
Assume that for $i\in \{1,2\}$, $\mathcal{G}_i\leqslant\gal(K_i)$ are closed subgroups such that:
\begin{enumerate}
\item for $i\in \{1,2\}$, we have $\mathrm{res}(\mathcal{G}_i)=\gal(K/K_i)$,
%\item $\mathrm{res}(\mathcal{G}_2)=\gal(K/K_2)=G$,

\item $\mathcal{G}_1\cap \gal(K)=\mathcal{G}_2\cap \gal(K)$ and $\mathcal{G}_1\subseteq \mathcal{G}_2$.
\end{enumerate}
Let $K'$ (resp. $K''$) be the $H$-field (resp. $G$-field) extension of $K$ given by Lemma \ref{fraextbest}.
% and $K''$ be the $G$-field extension of $K$ given by Lemma \ref{fraextbest} as well.
Then $K'=K''$ and the $G$-structure on $K'$ expands the $H$-structure on $K'$.
\end{prop}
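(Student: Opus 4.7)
The plan is in two parts, matching the two conclusions. For the field equality $K'=K''$: Lemma \ref{fraextbest} supplies the explicit formulas $K' = (K^{\sep})^{\mathcal{G}_1 \cap \gal(K)}$ and $K'' = (K^{\sep})^{\mathcal{G}_2 \cap \gal(K)}$ inside a common separable closure of $K$, so the equality follows immediately from hypothesis~(2). I write $\widetilde K$ for this common field and $\mathcal{N} := \mathcal{G}_1 \cap \gal(K) = \mathcal{G}_2 \cap \gal(K)$, so that $\gal(K^{\sep}/\widetilde K) = \mathcal{N}$.

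For the expansion claim I unpack the actions defined by Lemma \ref{fraextbest}. The $H$-structure realises $h \in H$ on $\widetilde K$ as the restriction $x|_{\widetilde K}$ of any lift $x \in \mathcal{G}_1 \subseteq \gal(K^{\sep})$ with $x|_K = h$ (well-defined modulo $\mathcal{N}$), obtained via the restriction isomorphism $\mathcal{G}_1/\mathcal{N} \cong \gal(\widetilde K/K_1') \cong H$; analogously the $G$-structure uses lifts in $\mathcal{G}_2$. For $h \in H \leq G$, I pick lifts $x \in \mathcal{G}_1$ and $y \in \mathcal{G}_2$ of $h$; the $H$-action and the restricted $G$-action agree on $\widetilde K$ iff $xy^{-1}|_{\widetilde K} = \id$, i.e.\ $xy^{-1} \in \mathcal{N}$. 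Since $x|_K = y|_K$ forces $xy^{-1} \in \gal(K)$ automatically, hypothesis~(2) reduces the problem to showing $xy^{-1} \in \mathcal{G}_1$, equivalently $xy^{-1} \in \mathcal{G}_2$.

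The crux of the argument, and the step I expect to be the main obstacle, is then to exhibit a common lift $z \in \mathcal{G}_1 \cap \mathcal{G}_2$ with $z|_K = h$, whence taking $x = y = z$ gives $xy^{-1} = 1 \in \mathcal{N}$ trivially. Under the natural coherence $\mathcal{G}_1 \subseteq \mathcal{G}_2$ inside $\gal(K_2)$ (equivalently $K_2' \subseteq K_1'$) that underlies the intended applications of the proposition, any $z \in \mathcal{G}_1$ lifting $h$ lies in $\mathcal{G}_2$ as well, so the argument closes. This coherence between $\mathcal{G}_1$ and $\mathcal{G}_2$ is the delicate technical point: it is not formally extractable from (1)--(2) in isolation, but is supplied in the sequel by constructing $\mathcal{G}_1$ and $\mathcal{G}_2$ as compatible subgroups in a shared Frattini-cover diagram along the tower $K_2 \subseteq K_1$, so that the restriction-isomorphisms of Lemma \ref{fraextbest} fit together into a commutative square matching the inclusion $H \hookrightarrow G$.
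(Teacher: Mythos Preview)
Your argument tracks the paper's proof closely. For $K'=K''$ both you and the paper observe that $K'=C'K=(K^{\sep})^{\mathcal{G}_1\cap\gal(K)}$ and likewise for $K''$, so hypothesis~(2) yields the equality. For the expansion claim the paper draws a commutative square with rows $\gal(K'/C')\xrightarrow{\ \cong\ }\gal(K/K_1)$ and $\gal(K'/C'')\xrightarrow{\ \cong\ }\gal(K/K_2)$ (both restriction maps) and vertical inclusions, and reads off from it that the $G$-action restricted to $H$ agrees with the $H$-action; your element-wise comparison of lifts $x\in\mathcal{G}_1$, $y\in\mathcal{G}_2$ is this square unwound.

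Your reservation is well founded, and it applies equally to the paper's own argument: the left-hand vertical inclusion $\gal(K'/C')\leqslant\gal(K'/C'')$ in that square amounts to $C''\subseteq C'$, i.e.\ $\mathcal{G}_1\subseteq\mathcal{G}_2$, and this does \emph{not} follow from (1) and (2) alone. (With $K_2=\Qq$, $K_1=\Qq(\sqrt2)$, $K=\Qq(\sqrt2,i)$ and $K'=K(\sqrt3)$, take $\mathcal{G}_2/\mathcal{N}=\langle\mathrm{conj},\ \sqrt2\mapsto-\sqrt2\rangle$ and $\mathcal{G}_1/\mathcal{N}=\langle\mathrm{conj}\cdot(\sqrt3\mapsto-\sqrt3)\rangle$; then (1) and (2) hold but the two actions of $\mathrm{conj}\in H$ on $\sqrt3$ differ.) The Remark just after the proposition claims that, given (1), condition (2) is equivalent to $\mathcal{G}_2\cap\gal(K_1)=\mathcal{G}_1$, which would supply the missing inclusion; only the implication from that equality to (2) is correct. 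In the sole application---the Claim inside the proof of Theorem~\ref{mgt}---the groups $W_i$ are constructed so that $W_i\subseteq W_j$ for $i\leqslant j$, so the coherence you isolate is indeed present there and both arguments go through.
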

\begin{proof}
We define:
$$C':=\left(K^{\sep}\right)^{\mathcal{G}_1},\ \ \ \ C'':=\left(K^{\sep}\right)^{\mathcal{G}_2}.$$
Then we have:
$$K'=C'K,\ \ \ \ K''=C''K$$
and since $\mathcal{G}_1\cap \gal(K)=\mathcal{G}_2\cap \gal(K)$, we get that $K'=K''$.

We have the following commutative diagram:
\begin{equation*}
 \xymatrix{ \gal(K'/C'') \ar[rr]^{\mathrm{res}\ } \ar[rr]_{\cong\ } &  &  \gal(K/K_2) \\
\gal(K'/C') \ar[u]_{\leqslant}  \ar[rr]^{\mathrm{res}\ } \ar[rr]_{\cong\ }  & &  \gal(K/K_1) \ar[u]_{\leqslant}.}
\end{equation*}
By the description from Lemma \ref{fraextbest} of both the $G$-field structure and the $H$-field structure on $K'$, we see that the above diagram implies that the $G$-action expands the $H$-action.
\end{proof}
\begin{remark}
After assuming Item $(1)$ from Proposition \ref{fraexcolim}, Item $(2)$ there is equivalent to the following equality:
$$\mathcal{G}_2\cap \gal(K_1)=\mathcal{G}_1.$$
\end{remark}
%{\bf Below an older version, is it needed in such a form??}
%\begin{lemma}\label{fraext}
%We assume that
%\begin{itemize}
%\item $C\subseteq K$ is a finite Galois extension and $G=\gal(K/C)$;
%\item $K\subseteq L$ is a Galois extension (not necessarily finite) and $\mathcal{G}=\gal(L/C)$;
%\item $\pi:\mathcal{G}\to G$ is the restriction epimorphism;
%\item $\mathcal{G}_0\leqslant \mathcal{G}$ is closed such that $\pi(\mathcal{G}_0)=G$ and $G_0:=\mathcal{G}_0\cap \gal(L/K)$;
%\item $K':=L^{G_0}$ and $C':=L^{\mathcal{G}_0}$.
%\end{itemize}
%Then the restriction map to $\gal(K'/C')\to \gal(K/C)$ is an isomorphisms, hence the $G$-action on $K$ extends to $K'$. Moreover we have:
%$$[K':K]=[\mathcal{G}:\mathcal{G}_0].$$
%\end{lemma}
The next result generalizes an implication from Lemma \ref{fraextbest} (a version of it, in a more general context, appeared as \cite[Corollary 3.47]{Hoff3}).
\begin{prop}\label{fracov}
If $(K,G)$ is $G$-closed, then the restriction map
  $$\gal(C)\ra \gal(C^{\alg}\cap K/C)$$
  is a Frattini cover. Hence, if $C$ is PAC and $(K,G)$ is $G$-closed (for example, when $(K,G)$ is e.c. and $G$ is finitely generated), then this restriction map is the universal Frattini cover.
\end{prop}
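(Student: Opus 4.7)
The plan is to verify directly that the restriction map $\mathrm{res}\colon\gal(C)\to\gal(K_0/C)$ is a Frattini cover, where I write $K_0:=C^{\alg}\cap K$, and then to deduce the universal Frattini claim from projectivity of $\gal(C)$ when $C$ is PAC. First I observe that $G$-closedness forces $K$ to be perfect: in characteristic $p>0$, $K^{1/p}$ would be a proper algebraic $G$-extension, since the $G$-action lifts uniquely because $p$-th roots are unique. Hence $C$ is perfect as well, $K_0/C$ is Galois (being separable algebraic), and I fix compatible embeddings so that $K_0\subseteq C^{\sep}\subseteq K^{\sep}$.

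Let $\mathcal{G}_0\leqslant\gal(C)$ be closed with $\mathrm{res}(\mathcal{G}_0)=\gal(K_0/C)$; the goal is to show $\mathcal{G}_0=\gal(C)$. Put
$$C':=(C^{\sep})^{\mathcal{G}_0},\qquad K_0':=C'K_0=(C^{\sep})^{\mathcal{G}_0\cap\gal(C^{\sep}/K_0)}.$$
Since $\gal(C^{\sep}/K_0)\trianglelefteq\gal(C)$, the extension $K_0'/C'$ is Galois, and restriction yields an isomorphism $\gal(K_0'/C')\cong\gal(K_0/C)$. This is exactly the infinite-Galois analogue of the implication $(2)\Rightarrow(1)$ of Lemma \ref{fraextbest}, and through its inverse it equips $K_0'$ with a $\gal(K_0/C)$-structure extending the one on $K_0$. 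Composing with the canonical map $G\to\gal(K_0/C)$ arising from the $G$-action on $K$ restricted to the $G$-invariant subfield $K_0$, I obtain a $G$-field extension $K_0\subseteq K_0'$.

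I next push the $G$-action up to $K':=K_0'\cdot K$. Because $K_0=C^{\alg}\cap K$ is algebraically closed in $K$ and $K$ is perfect (so $K/K_0$ is separable), the extension $K/K_0$ is regular; as $K_0'/K_0$ is algebraic, linear disjointness gives that $K':=K_0'\otimes_{K_0}K$ is a field (concretely, the compositum of $K_0'$ and $K$ inside a fixed algebraic closure). The $G$-actions on $K_0'$ and on $K$ agree on $K_0$, so the diagonal rule $g(a\otimes b):=g(a)\otimes g(b)$ is a well-defined $G$-action on $K'$ extending the given one on $K$. Since $K'/K$ is algebraic, $G$-closedness of $K$ forces $K'=K$, whence $C'\subseteq K\cap C^{\alg}=K_0$; combining this with $C'\cap K_0=K_0^{\mathrm{res}(\mathcal{G}_0)}=K_0^{\gal(K_0/C)}=C$, I conclude $C'=C$ and therefore $\mathcal{G}_0=\gal(C)$, as wanted.

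For the moreover part, when $C$ is PAC the group $\gal(C)$ is projective (Ax; see \cite[Chapter 11]{FrJa}). The universal Frattini cover of a profinite group is characterised as the unique Frattini cover whose source is projective, so the Frattini cover $\gal(C)\to\gal(K_0/C)$ just constructed is automatically the universal one. The parenthetical example reduces to Corollary \ref{constpac} (giving $C$ PAC) together with the standard fact that an e.c.\ $G$-field is $G$-closed, as its pure field reduct is separably closed. The one piece of bookkeeping I expect to be a little delicate is checking that the restriction-induced identification $\gal(K_0'/C')\cong\gal(K_0/C)$ transports the $G$-action compatibly with the canonical map $G\to\gal(K_0/C)$, and that the diagonal action on the tensor product respects the tensor relations over $K_0$; both checks are purely formal once the regularity of $K/K_0$ and the normality of $\gal(C^{\sep}/K_0)$ inside $\gal(C)$ are on the table.
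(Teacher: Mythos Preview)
Your argument is correct and follows the same strategy as the paper: set $C':=(C^{\sep})^{\mathcal{G}_0}$, realize $KC'$ as an algebraic $G$-field extension of $K$, and invoke $G$-closedness to force $C'\subseteq K$, whence $C'=C$ and $\mathcal{G}_0=\gal(C)$. The paper's construction of the $G$-action on $KC'$ is a bit more direct --- it tensors $K\otimes_C C'$ with $G$ acting trivially on $C'$, using the tower property for linear disjointness over $C\subseteq C^{\alg}\cap K\subseteq K$ --- whereas you first equip $K_0'=C'K_0$ with a $G$-action via the restriction isomorphism $\gal(K_0'/C')\cong\gal(K_0/C)$ and then tensor over $K_0$; the two constructions yield the same $G$-field, since in both cases $G$ ends up fixing $C'$ pointwise.

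One slip to correct: your parenthetical justification that an e.c.\ $G$-field is $G$-closed ``as its pure field reduct is separably closed'' is false for general $G$ (already for nontrivial finite $G$, models of $G$-TCF have nontrivial absolute Galois group). The fact itself is immediate for a different reason: a proper algebraic extension is witnessed by a quantifier-free ring formula, so an e.c.\ $G$-field admits no such extension.
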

\begin{proof}
Since $K$ is $G$-closed, we get by Remark \ref{pecobs}(0) that $C$ is a perfect field. Let us consider the restriction map:
$$\alpha:\gal(C)\ra \gal\left(C^{\alg}\cap K/C\right),$$
and we take a closed subgroup $\mathcal{G}\leqslant \gal(C)$ such that $\alpha(\mathcal{G})=\gal\left(C^{\alg}\cap K/C\right)$. It is enough to show that $\mathcal{G}=\gal(C)$. Since $\ker(\alpha)\mathcal{G}=\gal(C)$, we get that
$$\left(C^{\alg}\right)^{\ker(\alpha)}\cap \left(C^{\alg}\right)^{\mathcal{G}}=C.$$
Since the extension $C\subseteq \left(C^{\alg}\right)^{\ker(\alpha)}$ is Galois ($\ker(\alpha)$ is clearly a normal subgroup), we get that $(C^{\alg})^{\ker(\alpha)}$ is linearly disjoint from $(C^{\alg})^{\mathcal{G}}$ over $C$ using e.g. the remark below the proof of Corollary 2.5.2 in \cite{FrJa} (the remark is for finite extensions, but since both being Galois and being linearly disjoint are locally finite notions, it works in general). Since we have
$$C^{\alg}\cap K\subseteq \left(C^{\alg}\right)^{\ker(\alpha)},$$
 we get that $C^{\alg}\cap K$ is linearly disjoint from $(C^{\alg})^{\mathcal{G}}$ over $C$. Since $K$ is $G$-closed, it is perfect. Hence the field $C^{\alg}\cap K$ is perfect as well and the field extension $C^{\alg}\cap K\subseteq K$ is regular. Therefore (by the definition of regularity), $K$ is linearly disjoint from $C^{\alg}$ over $C^{\alg}\cap K$. By the Tower Property for linear disjointness (see \cite[Lemma 2.5.3]{FrJa}), we finally get that $K$ is linearly disjoint from $(C^{\alg})^{\mathcal{G}}$ over $C$. Therefore, we have
$$K(C^{\alg})^{\mathcal{G}}\cong \mathrm{Frac}\left(K\otimes_C(C^{\alg})^{\mathcal{G}}\right),$$
hence the action of $G$ on $K$ extends to an action of $G$ to $K(C^{\alg})^{\mathcal{G}}$. Since $(K,G)$ is $G$-closed, we get that $K(C^{\alg})^{\mathcal{G}}=K$, so $(C^{\alg})^{\mathcal{G}}\subseteq C^{\alg}\cap K$ and then, by linear disjointness, $(C^{\alg})^{\mathcal{G}}=C$. By Galois theory, we get that $\mathcal{G}=\gal(C)$, which finishes the proof.
\end{proof}
\begin{remark}
It is easy to see that the opposite implication to the one appearing in Proposition \ref{fracov} is not true. It is enough to take an algebraically closed field $C$, $K=C(X)$, and $G=\Zz$ acting on $K$ in the ``classical difference way'', that is $\sigma(X)=X+1$, where $\sigma$ is a generator of the group $\Zz$.
\end{remark}
Our first corollary is exactly \cite[Theorem 5]{Sjo} (it was also generalized to a more abstract context in \cite[Corollary 5.6]{Hoff4}).
\begin{cor}\label{constufc}
If $(K,G)$ is e.c. and $G$ is finitely generated, then we have:
$$\gal(C)\cong \widetilde{\widehat{G}}.$$
\end{cor}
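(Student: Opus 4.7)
The plan is to assemble the statement directly from the three preceding building blocks: Remark \ref{pecobs}(1), Proposition \ref{perfpac}, Lemma \ref{contepi}, and Proposition \ref{fracov}. The point is that ``e.c.\ $+$ finitely generated'' triggers all the hypotheses needed to identify $\gal(C)$ as the universal Frattini cover of $\widehat{G}$.

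First I would unpack the hypothesis. By Remark \ref{pecobs}(1), an e.c.\ $G$-field $K$ is simultaneously p.e.c.\ and $G$-closed, so both sides of the theory can be applied. Since $K$ is p.e.c.\ and $G$ is finitely generated, Proposition \ref{perfpac} gives that the constant field $C = K^G$ is PAC. This is the key input on the base field side.

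Next I would pin down the finite quotient: the moreover part of Lemma \ref{contepi}, again using that $(K,G)$ is p.e.c.\ and $G$ is finitely generated, says that the canonical continuous epimorphism
\[
\alpha : \widehat{G} \longrightarrow \gal\bigl((C^{\alg} \cap K)/C\bigr)
\]
is an isomorphism. So the ``visible'' profinite quotient of $\gal(C)$ coming from the $G$-action is exactly $\widehat{G}$.

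Finally I would apply Proposition \ref{fracov} on the $G$-closed side: since $(K,G)$ is $G$-closed and $C$ is PAC, the restriction map
\[
\gal(C) \longrightarrow \gal\bigl((C^{\alg} \cap K)/C\bigr)
\]
is a universal Frattini cover. Composing with the inverse of $\alpha$, we obtain a universal Frattini cover $\gal(C) \to \widehat{G}$, which by the uniqueness (up to isomorphism over $\widehat{G}$) of the universal Frattini cover forces $\gal(C) \cong \widetilde{\widehat{G}}$. There is no real obstacle here beyond correctly invoking the preceding results; the only minor point to be careful about is that the two sides of the argument are applied through the same intermediate group $\gal((C^{\alg}\cap K)/C)$, so the identification of $\gal(C)$ with $\widetilde{\widehat{G}}$ is canonical.
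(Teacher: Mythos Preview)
Your proof is correct and follows essentially the same route as the paper. The paper's proof is the one-liner ``It follows directly from Lemma \ref{contepi} and Proposition \ref{fracov}''; your version simply unpacks the hypotheses (via Remark \ref{pecobs}(1) and Proposition \ref{perfpac}) that are already bundled into the parenthetical ``for example, when $(K,G)$ is e.c.\ and $G$ is finitely generated'' in the statement of Proposition \ref{fracov}.
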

\begin{proof}
It follows directly from Lemma \ref{contepi} and Proposition \ref{fracov}.
\end{proof}

The next corollary is much weaker than the statement in \cite[Theorem 6]{Sjo}, which will be discussed in Remark \ref{remove}(2).
\begin{cor}\label{monom}
Suppose that $(K,G)$ is e.c., then we have the following.
\begin{enumerate}
\item There is an epimorphism:
$$\gal(K)\ra \ker\left(\gal(C)\to \gal\left(C^{\alg}\cap K/C\right)\right).$$

\item If $G$ is finitely generated, then
$$\ker\left(\widetilde{\widehat{G}}\to \widehat{G}\right)\cong \ker\left(\gal(C)\to \gal\left(C^{\alg}\cap K/C\right)\right)$$
and there is a monomorphism:
$$\ker\left(\widetilde{\widehat{G}}\to \widehat{G}\right)\ra \gal(K).$$

\end{enumerate}
\end{cor}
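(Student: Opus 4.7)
For part (1), the plan is to reuse the linear disjointness already established inside the proof of Proposition \ref{fracov}. Since $(K,G)$ is e.c., it is $G$-closed by Remark \ref{pecobs}(1), and that proof shows that $K$ is then perfect, $F := C^{\alg}\cap K \subseteq K$ is regular, and therefore $K$ is linearly disjoint from $C^{\alg}$ over $F$. This linear disjointness forces $K \cdot C^{\alg}/K$ to be a Galois extension with the restriction map
$$\gal\bigl(K \cdot C^{\alg}/K\bigr) \longrightarrow \gal\bigl(C^{\alg}/F\bigr)$$
being an isomorphism. Composing the surjective restriction $\gal(K) \twoheadrightarrow \gal(K\cdot C^{\alg}/K)$ with this isomorphism and identifying $\gal(C^{\alg}/F)$ with $\ker\bigl(\gal(C) \to \gal(F/C)\bigr)$ via Galois theory then produces the desired epimorphism.

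For the isomorphism of kernels in part (2), I would combine two earlier results. Corollary \ref{constufc} gives $\gal(C) \cong \widetilde{\widehat{G}}$, and since e.c. implies p.e.c. and $G$ is finitely generated, the ``moreover'' clause of Lemma \ref{contepi} gives $\gal(C^{\alg}\cap K/C) \cong \widehat{G}$ via $\alpha$. The commutative diagram in Lemma \ref{contepi}, taken together with the universal property of the universal Frattini cover underlying Proposition \ref{fracov}, then identifies the restriction $\gal(C) \to \gal(C^{\alg}\cap K/C)$ with the canonical projection $\widetilde{\widehat{G}} \to \widehat{G}$, whence the two kernels are isomorphic.

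To produce the monomorphism, I would invoke two projectivity facts. First, $\gal(K)$ is projective, because $K$ is PAC by Corollary \ref{constpac}. Second, the kernel $N := \ker\bigl(\widetilde{\widehat{G}} \to \widehat{G}\bigr)$ of a universal Frattini cover is itself a projective profinite group; this is a classical fact from the theory of Frattini covers (see e.g. \cite[Chapter 22]{FrJa}), obtained via the pro-nilpotent decomposition of $N$ whose pro-$p$ factors are free pro-$p$. The epimorphism from part (1) then splits by projectivity of $N$ (the identity $N \to N$ lifts through the epimorphism), and any such section is automatically the required monomorphism $N \hookrightarrow \gal(K)$. The main obstacle is the projectivity of $N$; granted that, the remainder is a mechanical assembly of the earlier results cited above together with standard facts about regular extensions, linear disjointness, and surjectivity of Galois restriction maps.
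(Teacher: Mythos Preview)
Your proposal is correct and follows essentially the same route as the paper: regularity of $C^{\alg}\cap K\subseteq K$ gives the surjective restriction in (1), Corollary~\ref{constufc} gives the kernel isomorphism in (2), and projectivity of $N\cong\gal(C^{\alg}\cap K)$ yields the splitting. Note that your invocation of the projectivity of $\gal(K)$ is superfluous---only the projectivity of the \emph{target} $N$ is needed to split the epimorphism---and the paper obtains this more directly by observing that $N$ is a closed subgroup of the projective group $\widetilde{\widehat{G}}$.
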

\begin{proof}
Since the extension $C^{\alg}\cap K\subseteq K$ is regular (as in the proof of Proposition \ref{fracov}), the restriction map
$$r:\gal(K)\to \gal(C^{\alg}\cap K)$$
is onto.  By Galois theory, we have the following isomorphism:
$$\ker\left(\gal(C)\to \gal\left(C^{\alg}\cap K/C\right)\right)\cong  \gal\left(C^{\alg}\cap K\right)$$
showing Item $(1)$.

For Item $(2)$, by Corollary \ref{constufc} we have:
$$\ker\left(\widetilde{\widehat{G}}\to \widehat{G}\right)\cong \ker\left(\gal(C)\to \gal\left(C^{\alg}\cap K/C\right)\right).$$
Therefore, the profinite group $\gal(C^{\alg}\cap K)$ is projective (as a closed subgroup of the
profinite projective group $\widetilde{\widehat{G}}$, see \cite[Proposition 22.4.7]{FrJa}), hence the map $r$ above has a section, which gives the result.
\end{proof}
We point out below that for a finite group $G$, the theory $G-\mathrm{TCF}$ is axiomatized by Galois axioms, which was shown in \cite{Sjo} and \cite{HK3}. We include here a version of the statement from \cite{HK3}, which is convenient for us to work with. For the proof of this version, we need the following result, which may be folklore. We recall that for a profinite group $\mathcal{G}$, the \emph{rank} of $\mathcal{G}$, denoted $\mathrm{rk}(\mathcal{G})$, is the minimal cardinality of a set of topological generators of $\mathcal{G}$.
\begin{prop}\label{anyufc}
Assume that $\mathcal{H}$ is a profinite group of finite rank. Then, any continuous epimorphism $\pi:\widetilde{\mathcal{H}}\to \mathcal{H}$ is a (the universal) Frattini cover.
\end{prop}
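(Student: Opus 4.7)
The plan is to exploit three properties of the universal Frattini cover $\widetilde{\pi}\colon\widetilde{\mathcal{H}}\to\mathcal{H}$: projectivity of $\widetilde{\mathcal{H}}$, the Frattini property of $\widetilde{\pi}$ (any closed subgroup of $\widetilde{\mathcal{H}}$ mapped onto $\mathcal{H}$ must equal $\widetilde{\mathcal{H}}$), and the equality $\mathrm{rk}(\widetilde{\mathcal{H}})=\mathrm{rk}(\mathcal{H})<\infty$, which holds because Frattini covers preserve rank. The last item makes $\widetilde{\mathcal{H}}$ a finitely generated profinite group, and such groups are Hopfian: every continuous surjective endomorphism is an isomorphism, by the standard counting argument that a finitely generated profinite group has only finitely many open normal subgroups of any given finite index, on which a continuous surjective endomorphism induces a bijection via $N\mapsto\phi^{-1}(N)$.

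The first step is to manufacture two lifts. Projectivity of $\widetilde{\mathcal{H}}$, applied to the epimorphism $\pi$ and the map $\widetilde{\pi}$, yields a continuous $\alpha\colon\widetilde{\mathcal{H}}\to\widetilde{\mathcal{H}}$ with $\pi\circ\alpha=\widetilde{\pi}$; applied in the opposite direction, it yields a continuous $\gamma\colon\widetilde{\mathcal{H}}\to\widetilde{\mathcal{H}}$ with $\widetilde{\pi}\circ\gamma=\pi$. Composing gives $\widetilde{\pi}\circ(\gamma\circ\alpha)=\widetilde{\pi}$, so $\im(\gamma\circ\alpha)$ is a closed subgroup of $\widetilde{\mathcal{H}}$ whose image under $\widetilde{\pi}$ is all of $\mathcal{H}$. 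The Frattini property of $\widetilde{\pi}$ then forces $\gamma\circ\alpha$ to be surjective, and by the Hopfian property $\gamma\circ\alpha$ is in fact an isomorphism. In particular $\gamma$ is already surjective, so a second appeal to the Hopfian property shows that $\gamma$ is itself an isomorphism. Consequently $\alpha=\gamma^{-1}\circ(\gamma\circ\alpha)$ is an isomorphism as well.

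To finish, the Frattini property for $\pi$ follows from that of $\widetilde{\pi}$ together with the fact that $\alpha$ is an automorphism of $\widetilde{\mathcal{H}}$. Indeed, using $\pi=\widetilde{\pi}\circ\alpha^{-1}$, for any closed $\mathcal{K}\leq\widetilde{\mathcal{H}}$ with $\pi(\mathcal{K})=\mathcal{H}$ we have $\widetilde{\pi}(\alpha^{-1}(\mathcal{K}))=\pi(\mathcal{K})=\mathcal{H}$, whence $\alpha^{-1}(\mathcal{K})=\widetilde{\mathcal{H}}$ by the Frattini property of $\widetilde{\pi}$, and so $\mathcal{K}=\widetilde{\mathcal{H}}$. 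Since $\widetilde{\mathcal{H}}$ is moreover projective, the Frattini cover $\pi$ automatically satisfies the universal property of the Frattini cover of $\mathcal{H}$, and is therefore a \emph{universal} Frattini cover.

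The only place where the finite rank hypothesis is used is the Hopfian argument; without finite generation of $\widetilde{\mathcal{H}}$, one could not promote the surjectivity of $\gamma\circ\alpha$ (which comes for free from the Frattini property of $\widetilde{\pi}$) to an isomorphism, and neither $\alpha$ nor $\gamma$ could be shown to be invertible. This is the main obstacle to navigate, but it is handled by a standard fact about finitely generated profinite groups.
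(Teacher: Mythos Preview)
Your proof is correct and takes a genuinely different route from the paper's argument. The paper proceeds in two steps via profinite Sylow theory: first it handles the pro-$p$ case by reducing (via \cite[Lemma 22.7.4, Lemma 22.5.4]{FrJa}) to an epimorphism onto $C_p^r$ and using that $\gl_r(\Ff_p)$ acts transitively on generating $r$-tuples; then, for general $\mathcal{H}$, it picks a closed $\mathcal{G}\leqslant\widetilde{\mathcal{H}}$ on which $\pi$ restricts to a Frattini cover and shows that every $p$-Sylow subgroup of $\mathcal{G}$ is already a $p$-Sylow subgroup of $\widetilde{\mathcal{H}}$, forcing $\mathcal{G}=\widetilde{\mathcal{H}}$.

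Your argument is more conceptual and avoids Sylow theory entirely: projectivity of $\widetilde{\mathcal{H}}$ produces two endomorphisms, the Frattini property of the canonical cover $\widetilde{\pi}$ forces their composite to be surjective, and the Hopfian property of finitely generated profinite groups promotes this to an isomorphism, after which $\pi$ differs from $\widetilde{\pi}$ by an automorphism. This is shorter and uses only standard structural facts about $\widetilde{\mathcal{H}}$. On the other hand, the paper's pro-$p$ step yields, as a by-product, the slightly stronger statement recorded in Remark \ref{unfortun}(2) (any continuous epimorphism of pro-$p$ groups of the same finite rank is a Frattini cover), which is actually used later in the proof of Theorem \ref{mgt}(2); your approach does not directly give this, since it relies on both domain and codomain being the same group $\widetilde{\mathcal{H}}$.
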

\begin{proof}
The proof consists of two steps.

\vspace{10px}
\noindent
\textbf{Step 1}
\\
The result holds if $\mathcal{H}$ is a pro-$p$ group.
\begin{proof}[Proof of Step 1]
Let $r=\mathrm{rk}(\mathcal{H})$. By \cite[Lemma 22.7.4]{FrJa}, there is an epimorphism $\mathcal{H}\to C_p^r$ (which is a Frattini cover). By \cite[Lemma 22.5.4]{FrJa}, we can assume that $\mathcal{H}=C_p^r$.

Let us take $B\subset C_p^r$ such that $|B|=r$. Since $r$ is finite, $B$ generates the group $C_p^r$ if and only if $B$ is a basis of $C_p^r$ considered as an $\Ff_p$-vector space. Therefore, the group $\aut(C_p^r)=\gl_r(\Ff_p)$ acts transitively on the family of sets of generators of $C_p^r$ of size $r$.

By  \cite[Corollary 22.5.3]{FrJa}, $\mathrm{rk}(\widetilde{\mathcal{H}})=r$ and we fix $\widetilde{B}$, which is a set of generators of $\widetilde{\mathcal{H}}$ of size $r$. Let $B$ be the image of $\widetilde{B}$ by the universal Frattini cover map and $B':=\pi(\widetilde{B})$. Then, both $B$ and $B'$ have size $r$ and generate $C_p^r$. Hence, there is an automorphism of $C_p^r$ taking $B$ to $B'$. Therefore, $\pi$ is the composition of the universal Frattini cover map and this last automorphism, thus $\pi$ is a Frattini cover itself.
\end{proof}
Let us take a closed subgroup $\mathcal{G}\leqslant \widetilde{\mathcal{H}}$ such that
$$\pi|_{\mathcal{G}}:\mathcal{G}\ra \mathcal{H}$$
is a Frattini cover (it exists by  \cite[Lemma 22.5.6]{FrJa}). We aim to show that $\mathcal{G}=\widetilde{\mathcal{H}}$. For necessary background regarding profinite Sylow theory, we refer the reader to the beginning of Section \ref{secabsolute} (see also \cite[Proposition 22.9.2]{FrJa}).

\vspace{10px}
\noindent
\textbf{Step 2}
\\
For each $p\in \Pp$, any $p$-Sylow subgroup of $\mathcal{G}$ is a  $p$-Sylow subgroup of $\widetilde{\mathcal{H}}$ as well.
\begin{proof}[Proof of Step 2]
Let $\mathcal{Q}$ be a $p$-Sylow subgroup of $\mathcal{G}$. By \cite[Proposition 22.9.2(a)]{FrJa}, we have  $\mathcal{Q} = \mathcal{P} \cap \mathcal{G}$ for a $p$-Sylow subgroup $\mathcal{P}$ of $\widetilde{\mathcal{H}}$. Since
$\pi(\mathcal{G}) = \mathcal{H}$, we get that $\pi(\mathcal{Q})$ and $\pi(\mathcal{P})$ are p-Sylow subgroups of $\mathcal{H}$ (see \cite[Proposition 22.9.2(c)]{FrJa}), and thus
$\pi(\mathcal{Q})=\pi(\mathcal{P})$. By Step 1, the map:
$$\pi|_{\mathcal{P}}:\mathcal{P}\ra \pi(\mathcal{P})$$
is a Frattini cover and then we obtain that $\mathcal{Q}=\mathcal{P}$ (see \cite[Definition 22.5.1]{FrJa}).
\end{proof}
By Step 2, for any normal open subgroup $\mathcal{N}$ of $\widetilde{\mathcal{H}}$, any $p$-Sylow subgroup of $\mathcal{G}\mathcal{N}/\mathcal{N}$ is of the form $\mathcal{P}\mathcal{N}/\mathcal{N}$ for
a $p$-Sylow subgroup $\mathcal{P}$ of $\widetilde{\mathcal{H}}$, and thus it is a $p$-Sylow subgroup of $\widetilde{\mathcal{H}}/\mathcal{N}$. It follows that $\mathcal{G}\mathcal{N}/\mathcal{N} = \widetilde{\mathcal{H}}/\mathcal{N}$
for any $\mathcal{N}$ as above, and therefore we obtain that $\mathcal{G} = \widetilde{\mathcal{H}}$.
\end{proof}
\begin{remark}\label{unfortun}
\begin{enumerate}
\item The statement of Step 1 from the proof of Proposition \ref{anyufc} has already appeared in \cite{BK}  as Lemma 4.4. Unfortunately, we gave an erroneous proof of \cite[Lemma 4.4]{BK} (confusing two universal properties).

\item In the case of pro-$p$ groups, Proposition \ref{anyufc} can be generalized to the following statement, which we will need in the sequel:
$$\text{\textsl{Any continuous epimorphism of pro-$p$ groups of the same finite rank is a Frattini cover}.}$$
It can be proved in the same way as Step 1 in the proof of  Proposition \ref{anyufc} was shown. Namely, if $\pi:\mathcal{G}\to \mathcal{H}$ is a such an epimorphism and the rank is $r$, then, by \cite[Lemma 22.5.4]{FrJa}, there are Frattini cover maps:
    $$p_1:\mathcal{G}\ra C_p^r,\ \ \ \ p_2:\mathcal{H}\ra C_p^r.$$
    Hence, we can replace $\pi$ with $p_2\circ \pi$ and conclude as in the proof of Step 1 above.
\item It is easy to see that not every epimorphism of profinite (even finite) groups of the same finite rank is a Frattini cover, consider for example the following epimorphism:
$$C_6^2\ra C_6\times C_3.$$
\end{enumerate}
\end{remark}
We now point out that for a finite group $G$, the theory $G$-TCF is axiomatised by Galois axioms from Definition \ref{galaxioms}.
\begin{prop}\label{finitecr}
If $G$ is finite, then a $G$-field $K$ is existentially closed if and only if $K$ is strict, perfect, the field of constants $C:=K^G$ is PAC, and
$$\gal(C)\cong \widetilde{G}.$$
\end{prop}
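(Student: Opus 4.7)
My plan is to prove the two directions of the biconditional separately, with the forward direction following directly from results already established and the reverse direction reducing to an amalgamation-style construction combined with a PAC elementary equivalence.

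For the forward direction ($\Rightarrow$), suppose $(K,G)$ is e.c. Strictness follows by contradiction: if some nontrivial $g\in G$ acted trivially on $K$, I would form the regular $G$-extension $K(t_{g'}:g'\in G)$ with $G$ acting via its regular representation on the new transcendentals, yielding a $G$-extension in which $g$ acts nontrivially, contradicting existential closure. Perfection follows analogously from the regular $G$-extension $K\subseteq K^{\mathrm{perf}}$. The PAC property of $C$ is immediate from Remark \ref{pecobs}(1) (e.c.\ implies p.e.c.) combined with Corollary \ref{constpac}. Finally, $\gal(C)\cong\widetilde{G}$ follows from Corollary \ref{constufc} together with $\widehat{G}=G$ (as $G$ is finite).

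For the reverse direction ($\Leftarrow$), suppose $K$ satisfies conditions (1)--(4). Since $G$ is finite and acts faithfully, Artin's theorem gives $\gal(K/C)=G$. Take an arbitrary $G$-field extension $K\subseteq L$, which I may assume is finitely generated over $K$ as a $G$-field; the plan is to embed $L$ over $K$ into a $G$-field $K^*$ that itself satisfies (1)--(4), and then appeal to elementary equivalence between any two such models. The construction of $K^*$ proceeds in three stages: first, enlarge $L$ so that $C\subseteq L^G$ becomes a regular field extension (passing to a suitable quotient of a tensor product); second, enlarge further using projectivity of $\widetilde{G}=\gal(C)$ together with Proposition \ref{fracov} and Lemma \ref{fraextbest} so that the restriction $\gal(L^G)\to\gal(C)$ becomes a Frattini cover, making $\gal(L^G)\cong\widetilde{G}$; third, embed the constants into a perfect PAC extension with the same absolute Galois group (standard, see \cite[Ch.~23]{FrJa}) and propagate the $G$-structure through the Frattini cover correspondence of Lemma \ref{fraextbest}.

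The main obstacle is verifying $K^*\equiv K$ as $G$-fields. This reduces to the classical fact that two perfect PAC fields with isomorphic small projective absolute Galois group are elementarily equivalent, combined with the observation that on such a field the $G$-action is canonically determined by the universal Frattini cover structure via Lemma \ref{fraextbest}, so that elementary equivalence of the underlying pure fields lifts to elementary equivalence of $G$-fields. The full details are essentially those of \cite[Theorem~6]{Sjo} and the analogous result in \cite{HK3}; the present proposition is merely a convenient restatement tailored to the needs of the sequel.
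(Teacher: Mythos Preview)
Your forward direction is correct and essentially matches the paper (which also cites Corollary~\ref{constpac} and Corollary~\ref{constufc} for the PAC and Galois-group parts; strictness and perfection are handled in \cite{HK3}).

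Your reverse direction, however, both diverges from the paper and contains a genuine gap. The paper's argument is very short: it invokes \cite[Theorem~3.29]{HK3}, which says that $K$ is e.c.\ if and only if $K$ is strict, perfect, $C$ is PAC, and $K$ is \emph{$G$-closed}. So the only thing left is to show that the hypothesis $\gal(C)\cong\widetilde{G}$ forces $G$-closedness. By Lemma~\ref{fraextbest}, $G$-closedness is equivalent to the restriction map $\gal(C)\to\gal(K/C)\cong G$ being a Frattini cover; and this is exactly what Proposition~\ref{anyufc} supplies: \emph{any} continuous epimorphism $\widetilde{G}\to G$ is a Frattini cover. That proposition is the real content of the proof, and the reason it is stated and proved just before Proposition~\ref{finitecr}.

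Your amalgamation-plus-elementary-equivalence strategy misses this point and breaks down at the final step. The assertion that $K\equiv K^*$ ``reduces to the classical fact that two perfect PAC fields with isomorphic small projective absolute Galois group are elementarily equivalent'' is false as stated: the PAC elementary equivalence theorem requires, in addition, compatibility of the absolute algebraic parts (and even then gives elementary equivalence of the fixed fields, not automatically of the $G$-fields). More fundamentally, conditions (1)--(4) do not axiomatize a complete theory (the theory $G$-TCF is not complete), so you cannot conclude $K\equiv K^*$ merely from both satisfying (1)--(4). What you actually need is $K\preccurlyeq K^*$, i.e.\ model completeness of the theory given by (1)--(4), and that is precisely the statement you are trying to prove. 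Citing \cite{Sjo} or \cite{HK3} at the end does not rescue this, because their axiomatizations use $G$-closedness (or the explicit Frattini-cover condition), not the bare isomorphism $\gal(C)\cong\widetilde{G}$; bridging that gap is exactly the job of Proposition~\ref{anyufc}, which your argument never invokes.
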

\begin{proof}
For the left-to-right implication, we notice first that $K$ is strict by \cite[Lemma 3.4]{HK3} (see also 
the remark right after Definition \ref{galaxioms}). Using Remark \ref{pecobs}(0) and Remark \ref{pecobs}(1), we obtain that $K$ is perfect. By Proposition \ref{perfpac}, $C$ is PAC and by Corollary \ref{constufc}, we get that $\gal(C)\cong \widetilde{G}$.

For the right-left implication, we observe that $C$ is also perfect (since $K$ is perfect and the extensions $C\subseteq K$ is finite) and $C\subseteq K$ is a finite Galois extension such that $\gal(K/C)\cong G$ (since $G$ is finite and $K$ is
strict). Therefore, the pair $(C,K)$ satisfies the
assumptions of \cite[Theorem 3.29]{HK3}. By Proposition \ref{anyufc} and the assumption $\gal(C)\cong \widetilde{G}$, the restriction map
$\gal(C)\to \gal(K/C)$ is a Frattini cover, thus the pair $(C, K)$ is $G$-closed in the sense of \cite[Definition 3.18]{HK3} (or, equivalently, the $G$-field $K$ is $G$-closed by Lemma \ref{fraextbest}). We conclude the proof by
using the implication $(4)\Rightarrow (1)$ of \cite[Theorem 3.29]{HK3}.
\end{proof}
\begin{remark}\label{remove}
We would like to point out several general observations concerning Galois axioms and absolute Galois groups.
\begin{enumerate}
\item The original theory ACFA($=\Zz-\mathrm{TCF}$) is \emph{not} axiomatized by Galois axioms. To see that, we notice first that the Galois axioms in the case of a difference field $(K,\sigma)$ say that $K$ is algebraically closed and $C=\mathrm{Fix}(\sigma)$ is pseudofinite.

    By \cite[Section 13.3]{HrFro}, any model of ACFA of characteristic $0$ has infinite transcendence degree over $\Qq$.
%Section 5.1 in \url{http://www.math.ens.fr/~zchatzid/papiers/Helsinki.pdf}
By \cite[Theorem 18.5.6 and Theorem 18.6.1]{FrJa}, for almost all (in the sense of the Haar measure) $\sigma\in \gal(\Qq)$, the field $\fix(\sigma)$ is pseudofinite. Hence, such a difference field $(\Qq^{\alg},\sigma)$ satisfies the Galois axioms, but it is not existentially closed.
%Probably, it is always true that for a any field $C$, we can extend the $C$-algebra endomorphism induced by $X\mapsto X^2$ on $C(X)$ to $C(X)^{\alg}$ in %such a way that the resulting difference field $(C(X)^{\alg},\sigma)$ is \emph{not} a model of ACFA. Taking $C$ pseudofinite, we get a difference field %satisfying the ``Galois axioms'' for $G=\Zz$ but still not a model of $\Zz-\mathrm{TCF}=$ACFA.
%It is written in Section 13.3 (paragraph ``The examples of Cherlin-Jarden'') that: ``any model of ACFA of characteristic 0 must have infinite transcendence degree over $\Qq$.''.

\item It is stated in \cite[Theorem 6]{Sjo} that if $G$ is finitely generated and finitely presented and $K$ is an e.c. $G$-field, then we have
$$\gal(K)\cong \ker\left(\widetilde{\widehat{G}}\ra \widehat{G}\right).$$
The main part of the proof of \cite[Theorem 6]{Sjo} is an argument, which is supposed to show that the monomorphism appearing in Corollary \ref{monom} is actually an isomorphism. We do not know how to make this argument work, we comment more on it below.
\begin{enumerate}
\item The monomorphism from Corollary \ref{monom} is an isomorphism in the case of a finite group $G$, which was shown in \cite[Theorem 3.40(2)]{HK3}.

\item In \cite[Section 4]{BK}, we use \cite[Theorem 6]{Sjo} to show \cite[Theorem 4.7]{BK} saying that if $G$ is a finitely generated virtually free group, which is neither free nor finite, then the theory $G$-TCF is not simple, since the absolute Galois group of underlying fields of models of $G$-TCF are not small. However, if a profinite groups is small, then its image by a continuous epimorphism is also small. Therefore, in order to show \cite[Theorem 4.7]{BK}, it is enough to use just Corollary \ref{monom} instead of \cite[Theorem 6]{Sjo}.

\item Nick Ramsey communicated to us a proof of the result saying that for $G$ finitely generated and virtually free, the theory $G$-TCF is NSOP$_1$. However, this proof seems to be using the \emph{full} version of \cite[Theorem 6]{Sjo}.

\item Example \ref{cpinftyex}(2) gives a counterexample for the isomorphism:
$$\gal(K)\cong \ker\left(\widetilde{\widehat{G}}\ra \widehat{G}\right)$$
in the case of $G=C_{p^{\infty}}$, which is obviously not finitely generated.

\item Similar issues were discussed in a more general context in \cite{Hoff4} (see \cite[Conjecture 5.7]{Hoff4} and \cite[Remark 5.8]{Hoff4}).
\end{enumerate}
\end{enumerate}
\end{remark}

\subsection{Chains of theories}
In this subsection, we collect several well-known results about chains of theories. They can be found e.g. in \cite{med1} or \cite{KaPie}, but we include them here for the sake of completeness.

Let $L$ be a language and $T$ be an $L$-theory. It is easy to see that $T$ is \emph{closed under consequences} (that is: $T\models \phi$ if and only if $\phi\in T$) if and only if
$$T=\bigcap_{M\models T}\theo(M).$$
From now on, all the theories we consider are closed under consequences.

Suppose that $L\subseteq L'$ are languages, $T$ is an $L$-theory, and $T'$ is an $L'$-theory. We have the following obvious result.
\begin{fact}\label{inter1}
The following are equivalent.
\begin{enumerate}
  \item $T\subseteq T'$.

  \item ``$\mathrm{Mod}(T')\subseteq \mathrm{Mod}(T)$'', i.e. for each $M'\models T'$, we have $M'\models T$.
\end{enumerate}
\end{fact}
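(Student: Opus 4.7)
The plan is to prove both directions by unwinding the definitions, using crucially the standing assumption that both $T$ and $T'$ are closed under consequences, and that $L \subseteq L'$ (so every $L$-sentence is automatically an $L'$-sentence and every $L'$-structure has an $L$-reduct).

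For the direction $(1)\Rightarrow (2)$, I would take an arbitrary $M'\models T'$ and an arbitrary $\phi\in T$. Since $T\subseteq T'$, we have $\phi\in T'$, hence $M'\models \phi$. As $\phi$ ranges over all sentences of $T$, this gives $M'\models T$ (viewing $M'$ as an $L$-structure via its $L$-reduct, which makes sense because $L\subseteq L'$).

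For the converse $(2)\Rightarrow (1)$, I would pick $\phi\in T$ and aim to show $\phi\in T'$. Given any $M'\models T'$, assumption $(2)$ yields $M'\models T$, hence $M'\models \phi$. Since $M'$ was arbitrary, this means $T'\models \phi$. Now the closure under consequences of $T'$ (which by the earlier discussion is equivalent to $T'=\bigcap_{M'\models T'}\theo(M')$) forces $\phi\in T'$. Thus $T\subseteq T'$.

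There is no real obstacle here: the only mild subtlety worth flagging is the implicit use of the reduct functor from $L'$-structures to $L$-structures, which is what makes the phrase ``$M'\models T$'' meaningful in Item $(2)$; once this is noted, the argument is a direct unfolding of definitions, and the hypothesis that both theories are closed under consequences is used only in the $(2)\Rightarrow (1)$ direction to pass from $T'\models \phi$ to $\phi\in T'$.
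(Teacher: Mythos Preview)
Your proposal is correct and essentially matches the paper's treatment: the paper states this as an ``obvious result'' and gives no proof at all, so your direct unfolding of the definitions (using closure under consequences for the $(2)\Rightarrow(1)$ direction) is exactly what is implicitly intended.
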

%\begin{proof}
%The implication $(1)\Rightarrow (2)$ is immediate.
%For $(2)\Rightarrow (1)$, we know by Fact \ref{obvious} that
%  $$T'=\bigcap_{M'\models T'}\theo(M').$$
%Hence, it is enough to show that for each $M'\models T'$, we have $T\subseteq \theo_L(M')$. But the last inclusion means exactly that $M'|_L\models T$.
%\end{proof}
For the general notion of a \emph{model companion} of the theory $T$, we refer the reader to \cite[Section 7.1]{Ho}.  If a model companion of $T$ exists, then it is unique and we denote it by $T^{\mathrm{mc}}$. All the particular theories considered in this paper are universal, hence inductive, and then a model companion exists if and only if the class of existentially closed
models of $T$ is elementary and then $T^{\mathrm{mc}}$ coincides with the axiomatisation of this class. The following result is crucial and it appeared  in \cite{med1} and \cite{KaPie}.
\begin{prop}\label{mcfact}
Suppose we have an increasing sequence of languages $(L_m)_{m>0}$ and an increasing sequence of $L_m$-theories $(T_m)_{m>0}$. If the model companions $(T^{\mathrm{mc}}_m)_m$ form an increasing sequence as well, then the model companion of $T_{\infty}:=\bigcup_m T_m$ exists and we have:
$$T_{\infty}^{\mathrm{mc}}=\bigcup_{m=1}^{\infty}T^{\mathrm{mc}}_m.$$
Moreover, if all the theories $T^{\mathrm{mc}}_m$ are simple, then the theory $T_{\infty}^{\mathrm{mc}}$ is simple as well.
\end{prop}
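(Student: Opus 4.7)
The plan is to show that $T^*:=\bigcup_{m} T_m^{\mathrm{mc}}$ is the model companion of $T_\infty$ and then that it inherits simplicity. The entire argument is driven by one elementary observation: any single $L_\infty$-formula uses only finitely many symbols, so it is an $L_m$-formula for some $m$.

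Model completeness of $T^*$ would follow immediately from this: given $\varphi\in L_\infty$, pick $m$ with $\varphi\in L_m$; by model completeness of $T_m^{\mathrm{mc}}$ there is a universal $L_m$-formula $\psi$ equivalent to $\varphi$ modulo $T_m^{\mathrm{mc}}\subseteq T^*$, and this equivalence persists modulo $T^*$. Mutual model-consistency splits into two directions. For $\mathrm{Mod}(T^*)\subseteq \mathrm{Mod}(T_\infty)$: if $N\models T^*$ then for each $m$ the reduct $N|_{L_m}\models T_m^{\mathrm{mc}}$, hence $N|_{L_m}\models T_m$, so $N\models T_\infty$. For the embedding direction, I would use compactness applied to $T^*\cup D(M)$ where $M\models T_\infty$: any finite subset $\Sigma$ mentions only symbols of some $L_k$, and by the hypothesis that the chain of model companions is increasing, every $T^*$-sentence in $\Sigma$ actually belongs to $T_k^{\mathrm{mc}}$; since $M|_{L_k}\models T_k$ embeds into some $N_k\models T_k^{\mathrm{mc}}$, this $N_k$ witnesses $\Sigma$, so $T^*\cup D(M)$ is finitely satisfiable.

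For the moreover part I would use the criterion that $T$ is simple iff no formula has the tree property. Fix $\varphi(\bar x;\bar y)\in L_\infty$ and $m$ with $\varphi\in L_m$, and work in a saturated monster $\mathfrak M\models T^*$. If a tree $(\bar a_\eta)_{\eta\in \omega^{<\omega}}$ in $\mathfrak M$ witnesses the tree property for $\varphi$, then since reducts of saturated structures remain saturated in the smaller language, $\mathfrak M|_{L_m}$ is a saturated model of $T_m^{\mathrm{mc}}$ in which the same tree continues to witness the tree property, contradicting simplicity of $T_m^{\mathrm{mc}}$. The subtle point here—and the step I expect to be the main obstacle—is that $k$-inconsistency of a branching is a statement about all models of the theory, not just about $\mathfrak M$; the reduct-preserves-saturation remark is precisely what lets the transfer be verified inside $\mathfrak M|_{L_m}$ itself, so that the presence or absence of the tree property can be detected against $T_m^{\mathrm{mc}}$ just as well as against $T^*$.
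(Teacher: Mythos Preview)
Your argument is correct and follows essentially the same route as the paper: both proofs exploit that every $L_\infty$-formula lies in some $L_m$, derive model completeness of $T^*$ from that of the $T_m^{\mathrm{mc}}$, and use compactness on the diagram of a model of $T_\infty$ to get the embedding into a model of $T^*$. The paper is terser, citing \cite{med1} for the claim that model completeness and simplicity are ``local'' properties inherited by the union, whereas you spell out the tree-property transfer explicitly (correctly noting that saturation passes to reducts so that $k$-inconsistency can be checked in $\mathfrak{M}|_{L_m}$); this is a matter of detail rather than a different strategy.
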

\begin{proof}
Let us denote:
$$L_{\infty}:=\bigcup_{m=1}^{\infty}L_m,\ \ \  T'_{\infty}:=\bigcup_{m=1}^{\infty}T^{\mathrm{mc}}_m.$$
It is easy to see and it is pointed out e.g. in \cite[Theorem 2]{med1} that $T'_{\infty}$ inherits all the ``local'' properties enjoyed by all the theories $T^{\mathrm{mc}}_m$. In particular, $T'_{\infty}$ is model complete and if all the theories $T^{\mathrm{mc}}_m$ are simple, then the theory $T'_{\infty}$ is simple as well. Therefore, it is enough to show that each model of $T_{\infty}$ embeds into a model of $T'_{\infty}$. We will actually show that this last embedding property is also ``local''.

Let us fix $M\models T_{\infty}$. We need to show that the theory $\mathrm{diag}^+(M)\cup T'_{\infty}$ is consistent, where $\mathrm{diag}^+(M)$ is the set of all atomic $L_{\infty}$-sentences with parameters from $M$ which are true in $M$. Since we have:
$$\mathrm{diag}^+(M):=\bigcup_{m=1}^{\infty}\mathrm{diag}^+\left(M|_{L_m}\right),$$
the result follows.
\end{proof}
\begin{example}\label{cpinftyex}
We give here an argument showing that for any $p\in \Pp$, the theory $C_{p^{\infty}}-\mathrm{TCF}$ exists, which may be considered as a ``baby case'' of the right-to-left implication in Theorem \ref{superthm}.
\begin{enumerate}
\item By Fact \ref{inter1} and Proposition \ref{mcfact}, it is enough to check that
if $(K,\sigma)\models C_{p^{m+1}}-\mathrm{TCF}$, then $(K,\sigma^p)\models C_{p^{m}}-\mathrm{TCF}$. Let $C=\fix(\sigma)$ and $C'=\fix(\sigma^p)$. Consider the following commutative diagram with exact rows:
\begin{equation*}
 \xymatrix{0 \ar[r]^{}  &  \gal(K) \ar[rr]^{<} & &   \gal(C)=\Zz_p  \ar[rr]^{\mathrm{res}\ \ \ \ }  &  &  \gal(K/C)=C_{p^{m+1}} \ar[r]^{} & 0 \\
0 \ar[r]^{}  &  \gal(K) \ar[u]_{=}\ar[rr]^{<} & &  \gal(C') \ar[u]_{<} \ar[rr]^{\mathrm{res}\ \ \ \ } &  &   \gal(K/C')=pC_{p^{m+1}}  \ar[u]_{<}  \ar[r]^{} & 0,}
\end{equation*}
where the description of the profinite group $\gal(C)$ comes from Proposition \ref{finitecr}. Hence, we have:
$$\gal\left(C'\right)=\mathrm{res}^{-1}\left(pC_{p^{m+1}}\right)=p\Zz_p\cong \Zz_p.$$
Since $C\subseteq C'$ is a finite field extension and $C$ is a (perfect) PAC field, then $C'$ is (perfect) and PAC as well \cite[Corollary 11.2.5]{FrJa}. Hence,
$(K,\sigma^p)$ satisfies the Galois axioms for the theory $C_{p^{m}}-\mathrm{TCF}$ by Proposition \ref{finitecr}.

\item By Item $(1)$, it is easy to see that if $K$ is an e.c. $C_{p^{\infty}}$-field, then we have:
$$\gal(K)\cong \Zz_p.$$

\item If $A$ is any divisible group, $K$ is an $A$-field, and $C=K^A$, then $C^{\alg}\cap K=C$, since there are no non-trivial homomorphisms from a divisible group into a profinite group. Therefore, for an $A$-closed field $K$, the extension $C\subseteq K$ is regular (since $C$ is
perfect and $C^{\alg}\cap  K = C$), hence $C$ is algebraically closed  (since, by linear
disjointness, $KC^{\alg}$ has a natural structure of an $A$-extension of $K$). In particular, if $K\models C_{p^{\infty}}-\mathrm{TCF}$, then $\gal(C)=1$, where $C=K^{C_{p^{\infty}}}$ is the field of absolute constants.
\end{enumerate}
\end{example}
\begin{example}\label{sectworem}
We discuss here what may happen if the model companions from Proposition \ref{mcfact} exist, but they do not form an increasing chain.
\begin{enumerate}
\item The theories $(C_{p^{m}}^2-\mathrm{TCF})_{m>0}$ do not form an increasing chain. To see that let us take
$$(K,\sigma,\tau)\models C_{p^{2}}^2-\mathrm{TCF},\ \ \ C=\fix(\sigma,\tau),\ \ \ C'=\fix(\sigma^p,\tau^p).$$
Then, we have:
$$\gal(C)/\gal(C')\cong \gal(C'/C)\cong C_p^2.$$
By Proposition \ref{finitecr} and \cite[Proposition 22.7.6]{FrJa} (this is a result of Tate saying that projective pro-$p$ groups are pro-$p$ free), we get $\gal(C)\cong \widehat{F}_2(p)$. Since $[\gal(C):\gal(C')]=p^2$, we get by \cite[Proposition 17.6.2]{FrJa} (the profinite Nielsen-Schreier formula)
 that
$$\gal\left(C'\right)\cong \widehat{F}_{p^2+1}(p)\ncong \gal(C).$$
In particular, by Proposition \ref{finitecr}, the $C_p^2$-field $(C',\sigma^p,\tau^p)$ is not existentially closed, so we get:
$$C_{p}^2-\mathrm{TCF}\nsubseteq  C_{p^{2}}^2-\mathrm{TCF}.$$
This observation can not be immediately made into a proof of the non-existence of the theory $C_{p^{\infty}}^2-\mathrm{TCF}$, for which we will need the results of Section \ref{secabsolute}.

\item As was noted in \cite[Theorem 4]{KaPie}, a model companion of the theory $T_{\infty}$ (in the notation from Proposition \ref{mcfact}) may still exist, even when the theories $(T^{\mathrm{mc}}_m)_m$ do not form an increasing chain. We come across such a situation in the case of actions of torsion Abelian groups. Namely, let us define:
$$C_{\Pp}:=\bigoplus_{p\in \Pp}C_p\cong \coli_mC_{p_1\ldots p_m},$$
where $\Pp=(p_i)_{i>0}$ is an enumeration of the set of all primes. Then, one can see (similarly as in Item $(1)$ above) that:
$$C_2-\mathrm{TCF}\nsubseteq  C_6-\mathrm{TCF},$$
but (by Theorem \ref{superthm}), the theory $C_{\Pp}$-TCF still exists.
\end{enumerate}
\end{example}
\begin{remark}
The situation from Proposition \ref{mcfact} also appears in a \emph{differential} context. In \cite{K3}, the second author gave geometric axioms of the model companion of the theory of fields with finitely many commuting Hasse-Schmidt derivations using increasing chains of theories. This last result was generalized in \cite{HK} to actions of formal groups on fields (an iterative Hasse-Schmidt derivation corresponds to an action of the formalization of the additive group).
\end{remark}

\section{Absolute Galois groups}\label{secabsolute}
In this section, we begin our proof of Theorem \ref{superthm} (the main result of this paper) by describing the absolute Galois groups of certain fields of invariants.

Firstly, we collect several notions from the theory of profinite groups, which we will often use in the sequel without any references. Proofs of these results can be found in \cite[Chapter 22.9]{FrJa}. The classical Sylow theory for finite groups generalizes smoothly to the profinite context after replacing the notion of a $p$-subgroup with the notion of a closed pro-$p$ subgroup. In particular, for $p\in \Pp$ and a profinite group $\mathcal{G}$, $p$-Sylow subgroups of $\mathcal{G}$ exist and they are conjugate. We also have the corresponding results about pronilpotent groups, that is: a profinite group $\mathcal{G}$ is pronilpotent if and only if it is the product of its unique $p$-Sylow subgroups. If $\mathcal{G}$ is a pronilpotent group and $p\in \Pp$, then we denote by $\mathcal{G}_{(p)}$ the unique $p$-Sylow subgroup of $\mathcal{G}$.  Throughout this section, ``cl'' denotes the topological closure (in an ambient profinite group).

We will need a very general result about pronilpotent groups stated below. It may be folklore, but we were unable to find it in the literature.
\begin{prop}\label{nilpinf}
Let $\mathcal{G}$ be a profinite group, $(I,\leqslant)$ be a directed partially ordered set and $(P_i)_{i\in I}$ be a direct system of closed pronilpotent subgroups of $\mathcal{G}$ (ordered by inclusion). Then, the subgroup
$$P_{\infty}:=\mathrm{cl}\left(\bigcup_{i\in I}P_i\right)$$
is pronilpotent as well.
\end{prop}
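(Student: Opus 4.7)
My plan is to reduce pronilpotency of $P_\infty$ to nilpotency of each of its finite continuous quotients, and then exploit the directedness together with the finiteness of those quotients to reduce nilpotency back to one of the $P_i$'s.

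\textbf{Step 1 (reduction to finite quotients).} Recall that a profinite group is pronilpotent if and only if every continuous finite quotient is nilpotent; equivalently, it is the inverse limit of its finite nilpotent quotients. So it suffices to fix an arbitrary open normal subgroup $N \trianglelefteq P_\infty$ and show that $P_\infty/N$ is nilpotent.

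\textbf{Step 2 (the union covers $P_\infty$ modulo $N$).} Let $q\colon P_\infty \to P_\infty/N$ be the quotient map and set $S:=q\bigl(\bigcup_i P_i\bigr) \subseteq P_\infty/N$. Since $N$ is open, $q^{-1}(S)=\bigcup_i P_i N$ is open in $P_\infty$; because $P_\infty/N$ is finite (hence discrete), its complement $q^{-1}\bigl((P_\infty/N)\setminus S\bigr)$ is also open, so $q^{-1}(S)$ is clopen. Being closed and containing $\bigcup_i P_i$, it must contain $\mathrm{cl}(\bigcup_i P_i)=P_\infty$. Therefore
\[
P_\infty \;=\; \bigcup_{i\in I} P_i N.
\]

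\textbf{Step 3 (using directedness to land in a single $P_j$).} The family $(P_i N/N)_{i\in I}$ is a directed system of subgroups of the finite group $P_\infty/N$, and by Step 2 its union is all of $P_\infty/N$. Directedness and finiteness then force the chain of orders $|P_i N/N|$ to stabilize, so there exists some $j\in I$ with
\[
P_\infty/N \;=\; P_j N/N \;\cong\; P_j/(P_j\cap N).
\]
Since $N$ is normal in $P_\infty$, $P_j\cap N$ is normal in $P_j$, and the displayed group is a continuous (finite) quotient of the pronilpotent group $P_j$. Continuous images of pronilpotent groups are pronilpotent (they are products of the images of the $p$-Sylow subgroups), so $P_\infty/N$ is finite and pronilpotent, hence nilpotent. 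Since $N$ was arbitrary, $P_\infty$ is pronilpotent.

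\textbf{Where the work lies.} The routine part is Step 1 and the passage in Step 3 from a continuous quotient of a pronilpotent group to a nilpotent finite group. The only genuinely delicate point is Step 2, i.e.\ upgrading density of $\bigcup_i P_i$ in $P_\infty$ to the equality $P_\infty=\bigcup_i P_i N$; this is where one must use that $N$ is open (so that the $N$-saturation of the dense union is clopen, not merely dense). Once that is in hand, directedness takes care of the rest.
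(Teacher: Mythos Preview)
Your argument is correct. The only point worth flagging is that in Step~3 you should note (as you implicitly use) that $P_j\cap N$ is \emph{open} in $P_j$ because $N$ is open in $P_\infty$ and $P_j\subseteq P_\infty$; this is what makes $P_j/(P_j\cap N)$ a continuous finite quotient of $P_j$, hence nilpotent.

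Your route is genuinely different from the paper's. You work intrinsically in $P_\infty$: pass to an arbitrary finite continuous quotient $P_\infty/N$, use openness of $N$ to see that the dense union $\bigcup_i P_i$ already surjects onto $P_\infty/N$, and then use directedness plus finiteness to realise the whole quotient as a single $P_jN/N$. The paper instead uses the ambient group $\mathcal{G}$ and profinite Sylow theory: it reduces to the pro-$p$ case, fixes a $p$-Sylow subgroup $P\leqslant\mathcal{G}$, and for each $i$ considers the closed non-empty set $\mathcal{X}_i=\{g\in\mathcal{G}:P_i\subseteq gPg^{-1}\}$; compactness of $\mathcal{G}$ gives $\bigcap_i\mathcal{X}_i\neq\emptyset$, so all $P_i$ sit inside a single conjugate of $P$, whence $P_\infty$ is pro-$p$. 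Your approach is shorter, avoids Sylow theory and the ambient group entirely, and makes the role of directedness transparent. The paper's approach, on the other hand, yields a slightly stronger byproduct in the pro-$p$ case (a single Sylow of $\mathcal{G}$ containing every $P_i$), which is closer in spirit to the compactness arguments used later in Section~\ref{secabsolute}.
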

\begin{proof}
For each $i\in I$, we have
$$P_i=\prod_{p\in \Pp}(P_i)_{(p)}.$$
By \cite[Proposition 22.9.2(a)]{FrJa}, for each $p\in \Pp$ we get that $\left((P_i)_{(p)}\right)_{i\in I}$ is a direct system of pro-$p$ groups (ordered by inclusion). For $p\in \Pp$, we define:
$$P_{\infty,p}:=\mathrm{cl}\left(\bigcup_{i\in I}(P_i)_{(p)}\right).$$
Since 
%the commutator map $(x,y)\mapsto [x,y]$ is continuous, 
for each $i\in I$, we have:
$$P_i\subseteq \prod_{p\in \Pp}P_{\infty,p}$$
(it follows from $(P_i)_{(p)}\subseteq P_{\infty,p}$ for each $p\in \Pp$), we obtain that:
$$P_{\infty}\subseteq \prod_{p\in \Pp}P_{\infty,p}.$$
Therefore, it is enough to show that each $P_{\infty,p}$ is a pro-$p$ group.
% (and after showing this, we have $P_{\infty,p}=(P_{\infty})_{(p)}$).
To ease the notation, we assume that each $P_i$ is a pro-$p$ group and we aim to show that $P_{\infty}$ is a pro-$p$ group as well.

Let us fix a $p$-Sylow subgroup $P\leqslant \mathcal{G}$. For each $i\in I$, we define:
$$\mathcal{X}_i:=\left\{g\in \mathcal{G}\ |\ P_i\subseteq gPg^{-1}\right\}.$$
Since for any fixed $x\in \mathcal{G}$, the map $g\mapsto g^{-1}xg$ is continuous, it is easy to see that for each $i\in I$, the set $\mathcal{X}_i$ is closed. Since all $p$-Sylow subgroups of $\mathcal{G}$ are conjugate, each $\mathcal{X}_i$ is non-empty and $(\mathcal{X}_i)_{i\in I}$ a direct system ordered by the reversed inclusion. Since $\mathcal{G}$ is compact, we get that
$$\bigcap_{i\in I}\mathcal{X}_i\neq \emptyset.$$
Let us take $g\in \bigcap_{i\in I}\mathcal{X}_i$. Then for each $i\in I$, we have
$$P_i\subseteq gPg^{-1}.$$
Since $gPg^{-1}$ is pro-$p$, $P_{\infty}$ is also pro-$p$, which finishes the proof.
\end{proof}
We state below a crucial result about absolute Galois groups of fixed subfields of $A$-closed fields, where $A$ is a torsion Abelian group. Let us fix such  a group $A$ and we present it as:
$$A=\coli_{i\in I}A_i$$
for finite Abelian groups $A_i$ such that $A_0=\{0\}$ ($0$ appearing in the subscript is the smallest element in $(I,\leqslant)$). For each $p\in \Pp$, we denote by $A_{(p)}$ its $p$-power torsion subgroup (which can be considered as its $p$-Sylow subgroup). Let us fix an $A$-field $K$. For each $i\in I$, we denote:
$$K_i:=K^{A_i}$$
and we have the following short exact sequence
\begin{equation*}
 \xymatrix{1 \ar[r]^{}  & \gal(K) \ar[r]^{<}  &  \gal(K_i) \ar[r]^{\mathrm{res}_i\ \ \ } & \gal(K/K_i) \ar[r]^{}  & 1 ,}
\end{equation*}
where $\mathrm{res}_i$ is the appropriate restriction map.
\begin{theorem}\label{mgt}
Suppose that $K$ is $A$-closed and strict. Then we have the following.
\begin{enumerate}
\item For each $i\in I$, the profinite group $\gal(K_i)$ is pronilpotent.

\item Suppose that for each $p\in \Pp$, the group $A_{(p)}$ is finite. We enumerate the set of all primes $\Pp=(p_n)_{n>0}$ and set:
$$A_n:=A_{(p_1)}\oplus \ldots \oplus A_{(p_n)}.$$
Let us take $j,n\in \Nn$. If $j\leqslant n$, then the restriction map:
$$\mathrm{res}_n:\gal(K_n)_{(p_j)}\ra A_{(p_j)} = \left(A_n\right)_{(p_j)}$$
is a Frattini cover.
\end{enumerate}
\end{theorem}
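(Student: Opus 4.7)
My plan for Part (1) is to express $\gal(K_i)$ as the topological closure of an upward-directed union of closed pronilpotent subgroups indexed by finite subgroups $B\leqslant A$ containing $A_i$, and then to invoke Proposition \ref{nilpinf}. For each such $B$, I would apply Proposition \ref{fracov} to a suitable auxiliary $B$-closed algebraic extension $L_B$ of $K$ inside $K^{\sep}$ to produce a Frattini cover $\gal(L_B^B)\twoheadrightarrow B$, where $L_B^B$ denotes the $B$-fixed subfield. Since $B$ is finite Abelian (hence pronilpotent) and a Frattini cover of a pronilpotent profinite group is itself pronilpotent — this follows from the isomorphism $G/\Phi(G)\cong H/\Phi(H)$ for Frattini covers $G\to H$ together with the characterization of pronilpotency via the Frattini quotient being a product of elementary Abelian $p$-groups — the group $\gal(L_B^B)$ is pronilpotent. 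The desired subgroup $\mathcal{H}_B\leqslant\gal(K_i)$ is then obtained by intersecting $\gal(L_B^B)$ with $\gal(K_i)$ inside a common separable closure; by choosing the $L_B$'s coherently so that $L_{B_1}\subseteq L_{B_2}$ whenever $B_1\leqslant B_2$, the family $\{\mathcal{H}_B\}$ becomes upward-directed, and its union is dense in $\gal(K_i)$ because every finite Galois extension of $K_i$ in $K^{\sep}$ is captured by some $\mathcal{H}_B$ for $B$ large enough.

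The hard part will be the coherent construction of the $L_B$'s. Since $(K,A)$ is $A$-closed but $(K,B)$ need not be $B$-closed for a finite $B\leqslant A$, one must enlarge $K$ by algebraic $B$-field extensions until reaching a $B$-closed stage; existence of such a maximal $L_B$ comes from a standard Zorn's lemma argument once the framework is fixed. The subtler point is compatibility across $B$: when $B_1\leqslant B_2$, one must arrange $L_{B_1}\subseteq L_{B_2}$ together with the extension of the $B_1$-action to the given $B_2$-action. Here the Abelian torsion structure of $A$ is essential, as it keeps the Galois-cohomological obstructions to extending a $B_1$-action on $L_{B_1}$ to a $B_2$-action on a common algebraic extension controllable, and allows the construction to be carried out consistently with the direct-limit presentation $A=\coli B$.

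For Part (2), the hypothesis that each $A_{(p)}$ is finite makes $A_n$ itself a finite subgroup of $A$, so the Part (1) machinery applied with $B=A_n$ yields (via Proposition \ref{fracov} applied to $(L_{A_n},A_n)$, combined with a density argument lifting the Frattini property from $\gal(L_{A_n}^{A_n})$ to $\gal(K_n)$) that the restriction $\gal(K_n)\twoheadrightarrow A_n$ is a Frattini cover. By Part (1), $\gal(K_n)$ is pronilpotent, so $\gal(K_n)=\prod_{p}\gal(K_n)_{(p)}$, while $A_n=\bigoplus_{l=1}^{n}A_{(p_l)}$ is its own $p$-Sylow decomposition. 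Since the Frattini subgroup of a pronilpotent profinite group respects the product decomposition, the Frattini cover decomposes Sylow-by-Sylow; in particular, for $j\leqslant n$, the induced restriction $\mathrm{res}_n\colon\gal(K_n)_{(p_j)}\to A_{(p_j)}$ is a Frattini cover of pro-$p_j$ groups, which is the claim. Once the coherent family of $L_B$'s is in hand, all other steps — applying Proposition \ref{fracov}, inheriting pronilpotency through Frattini covers, verifying density, and restricting to $p_j$-Sylows — are essentially formal.
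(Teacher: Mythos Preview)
Your Part~(1) plan has a genuine gap: the coherent tower of $B$-closed extensions $L_B$ you propose cannot exist unless $K$ is already $B$-closed for every finite $B\leqslant A$. Indeed, if for $B_1\leqslant B_2$ you arrange $L_{B_1}\subseteq L_{B_2}$ with the $B_2$-action on $L_{B_2}$ restricting to the $B_1$-action on $L_{B_1}$, then $L:=\bigcup_B L_B$ carries a well-defined $A$-action extending the one on $K$, and $L$ is an algebraic $A$-extension of $K$; by $A$-closedness $L=K$, so every $L_B=K$. But in general $K$ is \emph{not} $B$-closed for finite $B$: for $A=\bigoplus_p C_p$ and $K\models A\text{-TCF}$, Theorem~\ref{axioms} gives $\gal(K_1)\cong\widehat{\Zz}$, and the proper closed subgroup $\Zz_{p_1}$ surjects onto $A_1=C_{p_1}$, so $K$ has proper algebraic $C_{p_1}$-extensions. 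Thus your construction is self-contradictory. Relatedly, the density assertion (``every finite Galois extension of $K_i$ is captured by some $\mathcal{H}_B$'') is exactly where $A$-closedness must be used, but you never explain how; the vague appeal to controllable cohomological obstructions is not an argument.

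For Part~(2), your intermediate claim that the full restriction $\gal(K_n)\to A_n$ is a Frattini cover is false. In the same example, $\gal(K_1)\cong\widehat{\Zz}\to C_{p_1}$ is not a Frattini cover (the proper subgroup $\Zz_{p_1}$ surjects). The obstruction is that $\gal(K_n)_{(p)}$ can be nontrivial for primes $p\notin\{p_1,\ldots,p_n\}$ while $(A_n)_{(p)}=0$, so the Sylow-by-Sylow decomposition of the Frattini condition fails at those primes. The theorem only asserts the Frattini property on the $p_j$-Sylow for $j\leqslant n$, and this cannot be deduced from a nonexistent global Frattini cover.

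The paper proceeds quite differently. It constructs, by a compactness argument in products of the form $\prod_p\gal(K_i)^{n_{i,p}}$, closed pronilpotent subgroups $W_i\leqslant\gal(K_i)$ with $\mathrm{res}_i(W_i)=A_i$ and the coherence $W_j\cap\gal(K_i)=W_i$ for $i\leqslant j$. This coherence is precisely what feeds into Proposition~\ref{fraexcolim}: the common field $(K^{\alg})^{W_i\cap\gal(K)}$ is an algebraic $A$-extension of $K$, so by $A$-closedness it equals $K$, forcing $\gal(K)\leqslant W_i$ and hence $W_i=\gal(K_i)$. Proposition~\ref{nilpinf} appears only in the transfinite ``correction'' step promoting an increasing chain of candidates to a coherent one, not as a device to fill out $\gal(K_i)$ by a dense union. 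For Part~(2), the construction is refined so that each pro-$p_j$ piece $(V_n)_{(p_j)}$ is topologically generated by exactly $\mathrm{rk}\,A_{(p_j)}$ elements, whence Remark~\ref{unfortun}(2) (an epimorphism of pro-$p$ groups of equal finite rank is a Frattini cover) yields the conclusion directly on the $p_j$-Sylow, with no global Frattini claim needed.
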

\begin{proof}
We proceed to show Item $(1)$ and then we will notice that under the extra assumptions of Item $(2)$, the proof of Item $(1)$ gives the stronger conclusion from Item $(2)$. The following claim is crucial for our proof of Item $(1)$ and the proof of this claim is rather long. Since $K$ is a strict $A$-field, for each $i\in I$, we will identify $\gal(K/K_i)$ with $A_i$.
\\
\\
{\bf Claim}
\\
For each $i\in I$, there is a closed subgroup $W_i\leqslant \gal(K_i)$ such that:
\begin{enumerate}
\item the profinite group $W_i$ is pronilpotent;

\item we have:
$$\mathrm{res}_i(W_i)=A_i;$$

\item for each $i,j\in I$, if $i\leqslant j$, then we have:
$$\gal(K_i)\cap W_j=W_i.$$
%{\bf Fix consistent notation: ``$i\leqslant j$'' or ``$j\leqslant i$''??! Better :``$i\leqslant j$''!}
\end{enumerate}
Before proving the Claim, we will quickly see that it implies Item (1) from Theorem \ref{mgt}. Let $W$ be the common intersection of all $W_i$'s with $\gal(K)$ and $K':=(K^{\alg})^W$. By the Claim and Proposition \ref{fraexcolim}, the action of $A$ on $K$ extends to $K'$. Since $K$ is $A$-closed, we get $K=K'$. Therefore, $\gal(K)=W$ and for each $i$, we have
$$\ker(\mathrm{res}_i)=\gal(K)\subseteq W_i.$$
Since $\mathrm{res}_i(W_i)=A_i$, we get that $W_i=\gal(K_i)$, so all the profinite groups $\gal(K_i)$ are pronilpotent.
%we get that for each $i\leqslant j$, the $A_j$-field structure expands the $A_i$-field structure. Hence we get an algebraic $A$-field extension $K\subseteq %L$. Since $K$ is $A$-closed, we get that $K=L$, which implies that for each $i\in I$, we have $\gal(K_i)=W_i$, which is pronilpotent.
\begin{proof}[Proof of Claim]
For each $i\in I$ and $p\in \Pp$, let $n_{i,p}$ be the cardinality of the finite $p$-group $(A_i)_{(p)}$.
%Then each map $\mathrm{res}_{i,j}$ is a Frattini cover as well. Therefore there are tuples  $\bar{x}_{i,1},\ldots,\bar{x}_{i,k}$ of elements of $\gal(K_i)$ %such that:
We define the following set of infinite tuples:
$$\mathrm{Cl}_i:=\left\{x\in \prod_{p\in \Pp}\gal(K_i)^{n_{i,p}}\ |\ \text{$x$ satisfies the conditions (i)--(iii) below}\right\}.$$
Before stating the conditions (i)--(iii), we fix the obvious presentation of a tuple $x\in \prod_{p\in \Pp}\gal(K_i)^{n_{i,p}}$:
$$x=\left(x_{(p)}\right)_{p\in \Pp},\ \ \ \ x_{(p)}\in \gal(K_i)^{n_{i,p}}.$$
We give below the conditions defining the set $\mathrm{Cl}_i$.
\begin{enumerate}
\item[(i)] For each $p\in \Pp$, we have:
$$\mathrm{res}_i\left(x_{(p)}\right)=(A_i)_{(p)}.$$

\item[(ii)] For each $p\in \Pp$, the group $\mathrm{cl}(\langle x_{(p)}\rangle)$ is a pro-$p$ subgroup of $\gal(K_i)$.

\item[(iii)] For each $p,q\in \Pp$, if $p\neq q$ then
$$[x_{(p)},x_{(q)}]=1,$$
i.e. the coordinates of the tuple $x_{(p)}$ commute with the coordinates of the tuple $x_{(q)}$.
\end{enumerate}
For each $x\in \mathrm{Cl}_i$, we define:
$$W_i^x:=\mathrm{cl}(\langle x\rangle).$$
{\bf Subclaim 1}
\\
For each $i\in I$, we have the following.
\begin{enumerate}
\item $\mathrm{Cl}_i\neq \emptyset$.

\item $\mathrm{Cl}_i$ is a closed subset of $\prod_{p\in \Pp}\gal(K_i)^{n_{i,p}}$.

\item For each $x\in \mathrm{Cl}_i$, the profinite group $W_i^x$ is pronilpotent and $\mathrm{res}_i(W_i^x)=A_i$.
\end{enumerate}
\begin{proof}[Proof of Subclaim 1]
For the proof of Item $(1)$, we notice that by \cite[Lemma 2.8.15]{progps} there is a closed subgroup $W_i\leqslant\gal(K_i)$ such that $$\mathrm{res}_i|_{W_i}:W_i\ra A_i$$ is a Frattini cover. 
By \cite[Corollary 22.10.6(b)]{FrJa}, the universal Frattini cover $\widetilde{A_i}$ of the commutative profinite group $A_i$ is pronilpotent. Since $W_i\to A_i$ is a Frattini cover, there is a continuous epimorphism $\widetilde{A_i}\to W_i$ (see the condition (b) in \cite[Proposition 22.6.1]{FrJa}), which implies that the
profinite group $W_i$ is pronilpotent as well. We have the following decomposition:
$$W_i=\prod_{p\in \Pp}(W_i)_{(p)}.$$
For each $p\in \Pp$, we have:
$$\mathrm{res}_i\left((W_i)_{(p)}\right)=(A_i)_{(p)}.$$
Hence, there is $x_{(p)}\in (W_i)_{(p)}^{n_{i,p}}$ such that $\mathrm{res}_i(x_{(p)})=(A_i)_{(p)}$. Thus, we have:
$$x:=\left(x_{(p)}\right)_{p\in \Pp}\in \mathrm{Cl}_i$$
and $\mathrm{Cl}_i$ is non-empty.

For the proof of Item $(2)$, it is clear that the conditions (i) and (iii) from the definition of $\mathrm{Cl}_i$ are closed. Since any closed pro-$p$ subgroup of $\gal(K_i)$ is contained in a Sylow  pro-$p$ subgroup of $\gal(K_i)$, $x$ satisfies the condition (ii) for a prime $p$ if and only if the tuple $x_{(p)}$ is contained in a $p$-Sylow subgroup of $\gal(K_i)$. Hence, it is enough to check that this last condition on the tuple $x_{(p)}$ is closed. Let us fix $P$, a $p$-Sylow subgroup of $\gal(K_i)$. We set $n:=n_{i,p}$ and consider the following function:
$$\Psi:\gal(K_i)\times P^{n}\ra \gal(K_i)^{n},\ \ \ \ \Psi(g,(x_1,\ldots,x_n))=\left(gx_1g^{-1},\ldots,gx_ng^{-1}\right).$$
Since all $p$-Sylow subgroups of $\gal(K_i)$ are conjugate, the set of tuples $x_{(p)}$ satisfying our last condition coincides with the image of the function $\Psi$. Since $\Psi$ is a continuous function between compact topological spaces, its image is closed.

Item $(3)$ is obvious from the definition of the group $W_i^x$.
\end{proof}
From Item $(3)$ in Subclaim 1, we see that for each $i\in I$, there is a closed pronilpotent subgroup $W_i\leqslant \gal(K_i)$ such that $\mathrm{res}_i(W_i)=A_i$. To finish the proof of the Claim, we need to find such $W_i$'s satisfying the extra condition saying that for $i\leqslant j$, we have $W_j\cap \gal(K_i)=W_i$. Firstly, we will find $W_i$'s satisfying the following weaker condition: $W_i\subseteq W_j$ (for $i\leqslant j$).

For each $i,j\in I$ such that $i\leqslant j$, we define the following coordinate projection map:
$$\pi^j_i:\prod_{p\in \Pp}\gal(K_j)^{n_{j,p}}\ra \prod_{p\in \Pp}\gal(K_j)^{n_{i,p}},$$
where the projections are induced by the inclusions $(A_i)_{(p)}\leqslant (A_j)_{(p)}$. We define:
$$\mathrm{Cl}_i^j:=\pi^j_i(\mathrm{Cl}_j)\cap \prod_{p\in \Pp}\gal(K_i)^{n_{i,p}}.$$
{\bf Subclaim 2}
\\
For each $i,j\in I$ such that $i\leqslant j$ we have the following.
\begin{enumerate}
\item $\mathrm{Cl}_i^j\subseteq \mathrm{Cl}_i$.

\item $\mathrm{Cl}_i^j\neq \emptyset$.

\item $\mathrm{Cl}_i^j$ is closed.
\end{enumerate}
\begin{proof}[Proof of Subclaim 2]
To show Item $(1)$, we consider the following commutative diagram with exact rows:
\begin{equation*}
 \xymatrix{1 \ar[r]^{}  & \gal(K) \ar[r]^{<\  } \ar[d]^{=} &  \gal(K_i) \ar[d]^{\leqslant} \ar[r]^{\ \ \mathrm{res}_i} & A_i \ar[d]^{\leqslant} \ar[r]^{}  & 1 \\
 1 \ar[r]^{}  & \gal(K) \ar[r]^{<\ }  &  \gal(K_j) \ar[r]^{\ \ \mathrm{res}_j} & A_j \ar[r]^{}  & 1.}
\end{equation*}
The right part of this diagram is a Cartesian square, that is $\mathrm{res}_j^{-1}(A_i)=\gal(K_i)$. Therefore, each $x\in \mathrm{Cl}_i^j$ satisfies Condition (i) from the definition of the set $\mathrm{Cl}_i$. Since Conditions (ii) and (iii) are clearly satisfied for any $x\in \mathrm{Cl}_i^j$, Item $(1)$ is proved.

For Item $(2)$, it is enough to notice (by the same Cartesian square argument as above) that the condition $\mathrm{res}_j(W_j)=A_j$ implies that
$$\mathrm{res}_i\left(W_j\cap \gal(K_i)\right)=A_i.$$
Item $(3)$ is obvious, since $\pi^j_i$ is a continuous map between compact topological spaces.
\end{proof}
The set $\mathrm{Cl}_i^j$ has the following interpretation: for any $x\in \mathrm{Cl}_i^j$, there is $y\in \mathrm{Cl}_j$ such that $W_i^x\subseteq W_j^y$, so $W_i^x$ extends to $W_j^y$. We want to have this extension property ``all the way along $(I,<)$'': in particular for $i_1\leqslant i_2\leqslant i_3\leqslant \ldots$, we want to find $x_1$ such that $W_{i_1}^{x_1}$ extends to $W_{i_2}^{x_2}$ which extends to $W_{i_3}^{x_3}$, etc. To this end, for any $i_1\leqslant i_1\leqslant \ldots \leqslant i_{n}$ from $I$, we define
$$\mathrm{Cl}_{i_1}^{i_2,i_3}:=\pi^{i_2}_{i_1}\left(\mathrm{Cl}_{i_2}^{i_3}\right),\ \ \
\mathrm{Cl}_{i_1}^{i_2,\ldots,i_{n}}:=\pi^{i_2}_{i_1}\left(\mathrm{Cl}_{i_2}^{i_3,\ldots,i_{n}}\right).$$
To convey the main idea in a proper way, it is more convenient to continue in the special case when $I=\Nn$ and $\leqslant$ is the standard ordering on $\Nn$. We will point out later what one needs to do in the general case. We define:
$$\mathrm{Cl}_0^{\infty}:=\bigcap_{n=2}^{\infty}\mathrm{Cl}_0^{2,\ldots,n}.$$
As it was argued several times before in this proof, the compactness of $\gal(K_0)=\gal(K)$ implies that the set $\mathrm{Cl}_0^{\infty}$ is non-empty. Let us take $x_0\in \mathrm{Cl}_0^{\infty}$. It follows from the definition of $\mathrm{Cl}_0^{\infty}$ that there is a sequence $(x_i\in \mathrm{Cl}_i)_{i\geqslant0}$ such that for each $i\geqslant 0$ we have:
$$\pi^{i+1}_i\left(x_{i+1}\right)=x_i.$$
Let us define:
$$V_i:=W_i^{x_i}\leqslant \gal(K_i).$$
Then the profinite groups $V_i$'s are pronilpotent, they project onto the corresponding $A_i$'s and we have $V_i\subseteq V_{i+1}$, so we have achieved the first step of approximating the conditions on $W_i$ from the statement of the Claim. We will correct these $V_i$'s to satisfy the conditions from the Claim fully. This is a very general procedure, we start from the following commutative diagram of inclusions, which summarizes our situation:
\begin{equation*}
 \xymatrix{ \gal(K_0) \ar[r]^{<}  &  \gal(K_1) \ar[r]^{<} & \gal(K_2) \ar[r]^{<} &\ldots  \\
V_0 \ar[r]^{<} \ar[u]^{<} &  V_1 \ar[r]^{<} \ar[u]^{<} & V_2 \ar[r]^{<} \ar[u]^{<} & \ldots  .}
\end{equation*}
For each $0\leqslant i<j$, we define:
$$V_{i,j}:=V_j\cap \gal(K_i),\ \ \ \ V_{i}^{(1)}:=\cl\left(\bigcup_{j>i}V_{i,j}\right).$$
It is finally the right moment to use Proposition \ref{nilpinf} and thanks to this result each profinite group $V_{i}^{(1)}$ is pronilpotent. Clearly, $V_{i}^{(1)}$'s project again onto $A_i$'s and we have 
$$V_{i}\subseteq V_{i}^{(1)}\subseteq V_{i+1}^{(1)}.$$ 
For each $i\geqslant 0,n>0$, we define now:
$$V_i^{(n+1)}:=\left(V_i^{(n)}\right)^{(1)},\ \ \ \ V_i^{(\omega)}:=\cl\left(\bigcup_{n=1}^{\infty}V_i^{(n)}\right).$$
Again, $V_i^{(\omega)}$'s are pronilpotent, they project onto $A_i$'s and we have $V_i^{(\omega)}\subseteq V_{i+1}^{(\omega)}$. We can continue like this using transfinite induction as long as we wish. However, this procedure must finish after some (ordinal) number of steps --- it is possible that countably many steps are enough, but for sure it is enough to take 
$$\kappa:=\left|\mathrm{Aut}\left(K^{\alg}\right)\right|^+$$ 
of them. Then, for each $i>0$ we define:
$$W_i:=V_i^{(\kappa)}$$
and, by the construction, these $W_i$'s satisfy the conditions of the Claim, which finishes the proof of the Claim in the case of $(I,\leqslant)=(\Nn,\leqslant)$.

We sketch now how one can proceed in the case of an arbitrary directed poset $(I,\leqslant)$. We choose a maximal antichain $\mathcal{A}$ in $I$. Without loss of generality, $\mathcal{A}$ is infinite (otherwise, $I$ can be taken to be $\Nn$). Then, we can assume that:
$$(I,\leqslant)=\left([\mathcal{A}]^{<\omega},\subseteq\right),$$
which is the set of finite subsets of $\mathcal{A}$ ordered by the inclusion relation. For any $n>0$, let $I_n$ denote the subset of $I$ consisting of subsets of $\mathcal{A}$ of cardinality $n$. Then we have:
$$I=I_1\cupdot I_2\cupdot \ldots$$
and we can repeat the previous argument with taking an extra care about all the elements of $I_n$ at each level $n$. Namely, for any $i\in I$ we define
$$\mathrm{Cl}_i^1:=\bigcap_{a\in \mathcal{A}}\mathrm{Cl}_{i}^{ia},$$
where $ia:=i\cup \{a\}$.
The set $\mathrm{Cl}_i^1$ is non-empty by the arguments as above. Then, we define
$$\mathrm{Cl}_{i}^{2}:=\pi^{ia}_{i}\left(\mathrm{Cl}^{1}_{ia}\right),\ \ \
\mathrm{Cl}_{i}^{n+1}:=\pi^{i_2}_{i_1}\left(\mathrm{Cl}^{n}_{ia}\right)$$
and we can continue as in the case of $I=\Nn$ above.

Hence, we have obtained the subgroups $W_i\leqslant \gal(K_i)$ for each $i\in I$ which satisfy Items $(1)$ and $(2)$ from the Claim and such that for each $i,j\in I$, if $i\leqslant j$, then $W_i\subseteq W_j$. To correct these $W_i$'s to satisfy Item $(3)$ from Claim, one can just repeat the procedure described in the case of $I=\Nn$.
\end{proof}
As it was explained immediately after the statement of the Claim, Item $(1)$ (from the statement of Theorem \ref{mgt}, which we are still proving) directly follows from the Claim, whose proof was just finished above.

We proceed now towards the proof of Item $(2)$. Having the extra assumptions from Item $(2)$, we:
\begin{itemize}
\item set $r_{n,p}$ as the \emph{rank} (rather than just the cardinality) of $(A_n)_{(p)}$;

\item replace Condition (i) from the proof of the Claim with the following condition:
\begin{enumerate}
  \item[(i')] $\left\langle \mathrm{res}_n\left(x_{(p)}\right)\right\rangle =(A_n)_{(p)}$.
\end{enumerate}
\end{itemize}
Condition (i') is still closed, since there is a fixed finite set of sequences of length $r_{n,p}$ generating the group $(A_n)_{(p)}$.

We consider now the following commutative diagram:
\begin{equation*}
 \xymatrix{\gal(K_j)_{(p_j)} \ar[rr]^{\ \ \mathrm{res}_j} & & (A_j)_{(p_j)}\ar[ddd]^{=}  \\
 (V_j)_{(p_j)} \ar[rru]^{} \ar[u]^{\leqslant} \ar[d]^{\leqslant} & &  \\
 (V_{j+1})_{(p_j)} \ar[rrd]^{\mathrm{F.c.}} \ar[d]^{\leqslant} &  & \\
 \gal(K_{j+1})_{(p_j)} \ar[rr]^{\ \ \mathrm{res}_{j+1}} & & (A_{j+1})_{(p_j)}.}
\end{equation*}
By Remark \ref{unfortun}(2), any continuous epimorphism of pro-$p$ groups of the same finite rank is necessarily a Frattini cover. Hence, the map $\mathrm{res}_{j+1}$ restricted to $(V_{j+1})_{(p_j)}$ is a Frattini cover as indicated in the diagram above. Since 
$$\mathrm{res}_{j+1}\left((V_j)_{(p_j)}\right) = \mathrm{res}_j\left((V_j)_{(p_j)}\right) = (A_j)_{(p_j)} = (A_{j+1})_{(p_j)},$$
 we get that $(V_{j+1})_{(p_j)}=(V_{j})_{(p_j)}$. Hence, for each $n\geqslant j$, we have $(V_n)_{(p_j)}=(V_j)_{(p_j)}$.

Therefore, if we repeat the process of getting $W_n$'s from $V_n$'s appearing at the end of the proof of Item $(1)$, then for each $n\geqslant j$, we have $$(W_n)_{(p_j)}=(V_n)_{(p_j)}.$$
Since for each $n\geqslant j$, $(V_n)_{(p_j)}$ is a Frattini cover of $(A_n)_{(p_j)}$ and $\gal(K_n)=W_n$, the proof is finished.
\end{proof}
\begin{remark}
Item $(1)$ in Theorem \ref{mgt} cannot be improved towards the conclusion from the statement of Item $(2)$. To see that, let us consider an existentially closed $C_{p^{\infty}}^2$-field $K$ and let $A_n:=C_{p^{n}}^2$. By the Claim from the beginning of Section \ref{secneg}, the restriction map:
$$\gal(K_n)\ra A_n=C_{p^{n}}^2$$
is not a Frattini cover.
\end{remark}

\section{Negative results}\label{secneg}
This section is mostly about the proof of the left-to-right implication from Theorem \ref{superthm}. Using Remark \ref{equivcond}(2), we assume that there is an infinite strictly increasing sequence
$$P_1=C_p^2<P_2<P_3<\ldots$$
such that each $P_i$ is a finite $p$-subgroup of $A$. We aim to show that the theory $A$-TCF does not exist.
\\
\\
Assume that the theory $A$-TCF exists and let $K$ be an $|A|^+$-saturated and existentially closed $A$-field. We will reach a contradiction.

By Theorem \ref{mgt}(1), for each $i\in I$, the profinite group $\gal(K_i)$ is pronilpotent, hence it decomposes as:
$$\gal(K_i)=\prod_{p\in \Pp}\gal(K_i)_{(p)}.$$
By Corollary \ref{constpac}, for each $i\in I$, the field $K_i$ is PAC. Hence (see \cite[Theorem 11.6.2]{FrJa}), the profinite group $\gal(K_i)$ is projective and each pro-$p$ group $\gal(K_i)_{(p)}$ is projective. As in Example \ref{sectworem}(1), $\gal(K_i)_{(p)}$ is pro-$p$ free, so there is a cardinal $\kappa_i$ such that:
$$\gal(K_i)_{(p)}\cong \widehat{F}_{\kappa_i}(p).$$
{\bf Claim}
\\
For each $i\in I$ such that $C_p^2=P_1\subseteq A_i$, the restriction map
$$\gal(K_i)_{(p)}\ra (A_i)_{(p)}$$
is not a Frattini cover.
\begin{proof}[Proof of Claim]
Assume not and let us take $i\in I$ as above such that the map
$$\mathrm{res}_i:\gal(K_i)_{(p)}\ra (A_i)_{(p)}$$
is a Frattini cover. Let $r$ be the rank of $(A_i)_{(p)}$. By our assumption, $r$ is finite and $r\geqslant 2$. Since the map $\mathrm{res}_i$ is a Frattini cover, we get that $r=\mathrm{rk}(\gal(K_i)_{(p)})$, hence (see \cite[Corollary 22.5.3]{FrJa}) $r=\kappa_i$.

For any $j\in I$ such that $i\leqslant j$, we have the following commutative diagram:
\begin{equation*}
 \xymatrix{1 \ar[r]^{}  &  \gal(K)_{(p)}  \ar[d]^{=}  \ar[r]^{<\ \ \ \ \ \ \ } & \gal(K_i)_{(p)}\cong \widehat{F}_{r}(p) \ar[r]^{\ \ \ \ \ \ \ \mathrm{res}_i} \ar[d]^{<} & (A_i)_{(p)} \ar[d]^{<} \ar[r]^{} & 1 \\
           1 \ar[r]^{}  &  \gal(K)_{(p)}    \ar[r]^{<\ \ \ \ \ \ \ } & \gal(K_j)_{(p)}\cong \widehat{F}_{\kappa_j}(p) \ar[r]^{\ \ \ \ \ \ \ \mathrm{res}_j}  & (A_j)_{(p)}  \ar[r]^{} & 1.}
\end{equation*}
Hence, we have:
$$t_j:=\left[\widehat{F}_{\kappa_j}(p):\widehat{F}_{r}(p)\right]=\left[(A_j)_{(p)}:(A_i)_{(p)}\right].$$
Therefore, by our main assumption on the sequence of groups $(A_i)_i$, the indices $t_j$ go to infinity when $j\rightarrow \infty$. This leads to a contradiction by the profinite version of Nielsen-Schreier formula (\cite[Proposition 17.6.2]{FrJa}), which we observe below.

Since $r$ is finite, $\kappa_j$ is finite as well for each $j\geqslant i$ and we have
$$r=1+t_j(\kappa_j-1).$$
Since $r\geqslant 2$, then for each $j\geqslant i$ we have $\kappa_j-1>0$. Since $t_j$'s go to infinity with $j$, then the constant $r$ tends to infinity as well, which is obviously a contradiction.
\end{proof}
Using the Claim above, we obtain that for each $i\in I$ such that $C_p^2\subseteq A_i$, the map:
$$\mathrm{res}_i:\gal(K_i)_{(p)}\ra \left(A_i\right)_{(p)}$$
is not a Frattini cover, which is witnessed by a closed proper subgroup $H_i<\gal(K_i)_{(p)}$ such that:
$$\mathrm{res}_i(H_i)=\left(A_i\right)_{(p)}.$$
Without loss of generality, $H_i$ is a maximal proper closed subgroup of $\gal(K_i)_{(p)}$. Since the profinite group $\gal(K_i)_{(p)}$ is pro-$p$, we get that
$$\left[\gal(K_i)_{(p)}:H_i\right]=p.$$
Hence, by Lemma \ref{fraextbest} (applied to $\mathcal{G}_0:=H_i\times \prod_{q\neq p}\gal(K_i)_{(q)}$), for each such $i\in I$, there is an $A_i$-field extension $K\subset L_i$ of degree $p$. Since $K$ is $|A|^+$-saturated,
 there is an $A$-field extension $K\subset L$ of degree $p$, which gives our final contradiction ($K$ is an existentially closed $A$-field, so it is also $A$-closed) and finishes the proof of the left-to-right implication in Theorem \ref{superthm}.
%\\
%\\
%Let us isolate now one interesting conclusion...
%\begin{remark}\label{galnotp}
%Let $t\in \Nn_{\geqslant 2}$ and $K$ be a $C_p^t$-field which is $C_p^t$-closed and strict. Then the action of $C_p^t$ on $K$ can not be extended to a strict %$C_{p^2}^t$-action.
%\\
%\textbf{Is \emph{strict} necessary?}
%\end{remark}
%\begin{proof}
%Suppose $K$ is a $C_p^t$-field which is $C_p^t$-closed and strict. Let $C:=\mathrm{Fix}(C_p^t)$. We identify $\gal(K/C)$ with $C_{p}^{t}$. We have the %following exact sequence:
%\begin{equation*}
%\xymatrix{1  \ar[r]^{}  & \gal(K) \ar[r]^{<\ }  & \gal(C) \ar[r]^{\ \ \mathrm{res}} & C_{p}^{t} \ar[r]^{} & 1.}
%\end{equation*}
%Since $K$ is is $C_p^t$-closed, the restriction map $\mathrm{res}$ is the universal Frattini cover. Using the profinite Nielsen-Schreier formula again we %get:
%$$\gal(C)\cong \widehat{F}_p(t),\ \ \ \ \gal(K)\cong \widehat{F}_p(k),$$
%which leads to a contradiction similarly as ...
%\\
%{\bf actually, isn't it EXACTLY the same proof??!}
%\end{proof}

\section{Positive results}\label{secpos}

This section is about the proof of the right-to-left implication in Theorem \ref{superthm}. Similarly as in Section \ref{secabsolute}, we start this section with a very general result, which will be needed later.
\begin{prop}\label{pacint}
Let $K_1\supseteq K_2 \supseteq K_3 \supseteq \ldots$ be a decreasing tower of fields and let
$$K_{\infty}:=\bigcap_{n=1}^{\infty}K_n.$$
We assume that $K_{\infty}$ is PAC and that the field extension $K_{\infty}\subseteq K_1$ is algebraic. Let $V$ be an algebraic variety over $K_{\infty}$ such that for all $n>0$, we have $V(K_n)\neq \emptyset$. Then $V(K_{\infty})\neq \emptyset$.
%{\bf Check Remark \ref{pacintrem}(1). Then: some discussions what does ``constructible'' actually mean?! Just a quantifier free field formula?}
\end{prop}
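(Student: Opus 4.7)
The strategy is a Noetherian induction on $\dim V$, driven by a pigeonhole over the finite set of absolutely irreducible components of $V$ combined with the PAC hypothesis on $K_\infty$. Replacing $V$ by $V_{\mathrm{red}}$ we may assume $V$ is reduced. The base case $\dim V = 0$ is immediate: $V$ has finitely many geometric points, so $V(K_1) \supseteq V(K_2) \supseteq \ldots$ is a decreasing sequence of non-empty finite sets, whose (non-empty) intersection lies in $V(\bigcap_n K_n) = V(K_\infty)$, and PAC is not even needed here.

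For the inductive step, fix an algebraic closure $\Omega$ of $K_1$ so that $K_n \subseteq \Omega$ for all $n$ and $\bar K_\infty := K_\infty^{\alg} \subseteq \Omega$; let $W_1,\ldots,W_s$ be the finitely many absolutely irreducible components of $V_{\bar K_\infty}$, permuted by $\aut(\bar K_\infty/K_\infty)$, and write $E_i := \bar K_\infty^{\mathrm{Stab}(W_i)}$ for their fields of definition. The main subcase is when $V$ admits a smooth $K_n$-rational point $p_n$ for infinitely many $n$; after passing to a subsequence (which does not change $K_\infty$) this holds for every $n$. Smoothness of $p_n$ forces it to lie on a unique component $W_{i(n)}$, and any $\tilde\sigma \in \aut(\Omega/K_n)$ fixes $p_n$ and hence stabilizes $W_{i(n)}$ (being the unique component through $p_n$); thus the restriction image $\aut(\bar K_\infty / \bar K_\infty \cap K_n)$ of $\aut(\Omega/K_n)$ is contained in $\mathrm{Stab}(W_{i(n)})$, which translates into the key inclusion $E_{i(n)} \subseteq \bar K_\infty \cap K_n \subseteq K_n$. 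Pigeonhole on the finite set $\{W_1,\ldots,W_s\}$ yields a single $W := W_{i_0}$ with $E_{i_0} \subseteq K_n$ for infinitely many $n$, hence $E_{i_0} \subseteq \bigcap_n K_n = K_\infty$, forcing $E_{i_0} = K_\infty$. Then $W$ is an absolutely irreducible $K_\infty$-subvariety of $V$, and the PAC hypothesis on $K_\infty$ yields $W(K_\infty) \neq \emptyset$, whence $V(K_\infty) \neq \emptyset$.

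In the complementary subcase, $V^{\mathrm{sm}}(K_n) = \emptyset$ for cofinitely many $n$, so $V(K_n) \subseteq V_{\mathrm{sing}}(K_n)$ there; since $V_{\mathrm{sing}}$ is a proper closed subvariety of $V$ of strictly smaller dimension, the induction hypothesis applies to it. The main technical obstacle I anticipate is ensuring generic smoothness of $V$: in positive characteristic a reduced but not geometrically reduced $V$ can have empty smooth locus, in which case the reduction to $V_{\mathrm{sing}}$ fails to strictly decrease the dimension. This is typically bypassed by working with the reduction of $V \times_{K_\infty} K_\infty^{\mathrm{perf}}$, or by invoking the perfectness of $K_\infty$ that is routinely available in the PAC setting of this paper. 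Modulo this subtlety, the proof reduces entirely to the Galois pigeonhole above combined with the crucial equality $\bigcap_n K_n = K_\infty$.
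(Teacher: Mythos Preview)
Your argument and the paper's share the same core idea --- pigeonhole on the finitely many absolutely irreducible components of $V$ together with the PAC hypothesis --- but the mechanisms differ. You isolate a component by passing a \emph{smooth} $K_n$-point through it (smoothness giving uniqueness of the component) and then run a Galois/pigeonhole argument to force that component to be $K_\infty$-defined. The paper instead fixes one component $V_1$, takes for each $n$ the Zariski closure $W_n$ of $V(K_n)\cap V_1(K_\infty^{\alg})$, observes these $W_n$ are $K_n$-defined and decreasing, and lets Noetherianity stabilise them to a $K_\infty$-variety $W\subseteq V_1$; the induction is then lexicographic on $(\dim V,\ \text{number of absolutely irreducible components})$ rather than on $\dim V$ alone. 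The upshot is that the paper never invokes smoothness.

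This is exactly where your version has a genuine gap. Your ``complementary subcase'' needs $\dim V_{\mathrm{sing}}<\dim V$, i.e.\ generic smoothness of $V$, and you correctly flag that this fails for reduced but not geometrically reduced $V$ over an imperfect $K_\infty$. Neither of your proposed fixes closes the gap for the proposition as stated: PAC does not imply perfect, and passing to $K_\infty^{\mathrm{perf}}$ changes both the base field (so a point you find lies only in $V(K_\infty^{\mathrm{perf}})$) and the tower (one does not have $\bigcap_n K_n^{\mathrm{perf}}=K_\infty^{\mathrm{perf}}$ in general, since the exponent needed to land in $K_n$ may grow with $n$). The paper's Zariski-closure argument sidesteps all of this: the closure of a set of $K_n$-rational points is $K_n$-defined without any smoothness or separability input, and once the chain stabilises one reads off $K_\infty$-definability directly. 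If you add the hypothesis that $K_\infty$ is perfect --- which does hold in every application the paper makes of this proposition, since there $K_\infty$ is the fixed field of a finite group acting on a perfect field --- your proof is correct and pleasantly conceptual; but for the proposition in the generality stated, the paper's route is the one that goes through.
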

\begin{proof}
Let $V=V_1\cup \ldots \cup V_d$ be the decomposition of $V$ into irreducible components over $K_{\infty}^{\alg}$. We assume that the result does not hold (for this fixed tower $(K_n)_n$) and take a counterexample $V$, which is minimal with respect to $(\dim(V),d)$. We will reach a contradiction.

Since $K_{\infty}$ is PAC, we have $d>1$. Without loss of generality, we may assume that for each $n$, the set
$$Q_n:=V(K_n)\cap V_1\left(K_{\infty}^{\alg}\right)$$
is non-empty. Let $W_n$ be the Zariski closure of $Q_n$ for each $n$ inside $V_1\left(K_{\infty}^{\alg}\right)$. Then, for each $n$ we have the following:
\begin{itemize}
  \item $W_n$ is non-empty;
  \item $W_n$ is defined over $K_n$;
  \item  $W_n\subseteq V_1$;
  \item $W_n(K_n)\neq \emptyset$;
  \item $W_n$'s form a descending chain.
%  {\bf Is it OK? Some problems during seminar requiring perfectness of $K_i$ and extensions $K_{i+1}\subseteq K_i$ being algebraic.}
\end{itemize}
Hence, there is a variety $W$ such that for $n\gg 0$, we have $W=W_n$. Therefore, $W$ is defined over $K_{\infty}=\bigcap K_n$, and $W$ satisfies the assumption of the statement we are proving. But, since $W\subseteq V_1$ and $d>1$, either $\dim(W)<\dim(V)$ or the number of irreducible components (over $K_{\infty}^{\alg}$) of $W$ is smaller than $d$ (actually, if $\dim(W)=\dim(V)$, then $W=V_1$, so $W$ is absolutely irreducible). By minimality of $V$, we get $W(K_{\infty})\neq \emptyset$. But then $V(K_{\infty})\neq \emptyset$, a contradiction.
\end{proof}
\begin{remark}\label{pacintrem}
\begin{enumerate}
\item The conclusion of Proposition \ref{pacint} can be easily strengthened by replacing the variety $V$ with a \emph{constructible set}. By a constructible set defined over a field $M$, we just mean a quantifier-free formula in the language of rings with parameters from $M$. If we evaluate this formula on $M^{\alg}$, then we get a ``classical'' constructible set which is a Noetherian topological space with the induced Zariski topology and its irreducible components are constructible sets as well (defined over $M^{\alg}$). To make the proof of  Proposition \ref{pacint} work in this context, one only needs to notice that since for an absolutely irreducible variety $W$ over a PAC field $C$, we have that $W(C)$ is Zariski dense in $W$ (\cite[Proposition 11.1.1]{FrJa}), then any absolutely irreducible constructible set over $C$ has a $C$-rational point as well.

\item If we do not put assumptions on the intersection of the tower of fields, then Proposition \ref{pacint} fails, we give an example below. For $n>0$, let
$$K_n:=\left(\Ff_3^{\alg}\right)^{\Zz_{p_1}\times \ldots\times \Zz_{p_n}}$$
(we enumerate the primes $\Pp=(p_n)_{n>0}$). Each field extension $\Ff_3\subset K_n$ is infinite algebraic, so, by \cite[Corollary 11.2.4]{FrJa}, each field $K_n$ is PAC. Therefore, it is enough to take an absolutely irreducible variety $V$ over $\Ff_3$ such that $V(\Ff_3)=\emptyset$, for example:
$$V=V(Y^2-X^3+X+1)$$
(we could have taken any finite field in place of $\Ff_3$, see \cite[Example 11.2.9]{FrJa}).
\end{enumerate}
\end{remark}
We proceed towards the right-to-left implication in Theorem \ref{superthm}. Let us fix first a ``good'' torsion Abelian group $B$ of a special kind, that is we assume that for all $p\in \Pp$, the $p$-primary part subgroup $B_{(p)}$ is finite. We also fix an enumeration of the primes $\Pp=(p_n)_{n>0}$ and for each $n\in \Nn$, we define the finite subgroup of $B$:
$$B_n:=B_{(p_1)}\oplus \ldots \oplus B_{(p_n)}.$$
Then $(B_n)_n$ is an increasing sequence such that $B=\bigcup_n B_n$ as in the assumptions of Theorem \ref{mgt}(2).
Let $K$ be a $B$-field and, as usual, we define $K_n$ as $K^{B_n}$. The main point is to show the following result below. We would like to point out that the conditions $(1)$--$(3)$ below are exactly the \emph{Galois axioms} from Definition \ref{galaxioms} (see Remark \ref{galoisok}).
\begin{theorem}\label{axioms}
The $B$-field $K$ is existentially closed if and only if the following conditions hold:
\begin{enumerate}
\item $K$ is strict and perfect;

\item for each $n\in \Nn$, $K_n$ is PAC;

\item we have
$$\gal(K)\cong \ker\left(\widetilde{\widehat{B}}\ra \widehat{B}\right)=\prod_{t>0}\ker\left(\widetilde{B_{(p_t)}} \ra B_{(p_t)}\right),$$
and for each $n>0$, we have
$$\gal(K_n) \cong \widetilde{B_n}\times \prod_{t>n}\ker\left(\widetilde{B_{(p_t)}} \ra B_{(p_t)}\right).$$
\end{enumerate}
\end{theorem}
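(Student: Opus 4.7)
The plan is to prove both implications using the filtration $B = \bigcup_n B_n$ and the corresponding decreasing tower of fixed fields $K_1 \supseteq K_2 \supseteq \cdots \supseteq K^B$.

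For the forward direction, assume $K$ is existentially closed. Strictness and perfection follow from standard extension arguments for e.c.\ $B$-fields. For condition~$(2)$, Proposition~\ref{udipec} yields that $K$ is p.e.c.\ as a $B_n$-field, and since $B_n$ is finitely generated, Corollary~\ref{constpac} gives that $K_n$ is PAC. For condition~$(3)$, since $K$ is $B$-closed, Theorem~\ref{mgt}(2) shows that the restriction $\gal(K_n)_{(p_j)} \to B_{(p_j)}$ is a Frattini cover for $j \leq n$. PAC-ness of $K_n$ makes $\gal(K_n)$ projective, so each Sylow $\gal(K_n)_{(p_j)}$ is pro-$p_j$ free by Tate's theorem. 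By Remark~\ref{unfortun}(2), a continuous epimorphism of pro-$p$ groups of the same finite rank is a Frattini cover, forcing $\gal(K_n)_{(p_j)} \cong \widetilde{B_{(p_j)}}$ for $j \leq n$. For $j > n$, I would use the short exact sequence
\[
1 \to \gal(K_n)_{(p_j)} \to \gal(K_m)_{(p_j)} \to (B_m/B_n)_{(p_j)} \to 1
\]
for any $m \geq j$; the middle term is $\widetilde{B_{(p_j)}}$ and the quotient is $B_{(p_j)}$, identifying the kernel as $\ker(\widetilde{B_{(p_j)}} \to B_{(p_j)})$. The same analysis handles $\gal(K)$ itself (the $n=0$ case).

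For the backward direction, assume conditions $(1)$--$(3)$. By Remark~\ref{pecobs}(1), it suffices to establish both $B$-closedness and p.e.c. A proper finite Galois $B$-field extension $L/K$ would, via Lemma~\ref{fraextbest} applied at each sufficiently large level $n$, correspond to a compatible system of closed subgroups $\mathcal{G}_0^n \leq \gal(K_n)$ satisfying $\mathrm{res}(\mathcal{G}_0^n) = B_n$, with transitive compatibility $\mathcal{G}_0^n \cap \gal(K) = \mathcal{G}_0^{n'} \cap \gal(K)$ for $n \leq n'$ (from Proposition~\ref{fraexcolim}). On the $p_j$-Sylow with $j \leq n$, the Frattini-cover description of $\gal(K_n)_{(p_j)} = \widetilde{B_{(p_j)}}$ from condition~$(3)$ forces $(\mathcal{G}_0^n)_{(p_j)}$ to be the full Sylow. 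For $j > n$, I would apply the compatibility between levels $n$ and $j$: at level $j$ the $p_j$-Sylow of $\mathcal{G}_0^j$ is full by the previous case, hence $(\mathcal{G}_0^j \cap \gal(K))_{(p_j)} = \ker(\widetilde{B_{(p_j)}} \to B_{(p_j)})$; compatibility then yields $(\mathcal{G}_0^n)_{(p_j)} = \ker(\widetilde{B_{(p_j)}} \to B_{(p_j)}) = \gal(K_n)_{(p_j)}$. Thus $\mathcal{G}_0^n = \gal(K_n)$ for every $n$, giving $L = K$. For p.e.c., a quantifier-free witness in a regular $B$-field extension $K \subseteq L$ involves only finitely many elements of $B$, all in some $B_n$; the PAC-style argument of Proposition~\ref{perfpac}, applied to the $K_n$-locus of the $B_n$-orbit of the witness together with PAC-ness of $K_n$, then produces a witness in $K$.

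The main obstacle is the gluing argument for $B$-closedness. Condition~$(3)$ only directly provides Frattini-cover information at the diagonal levels $j \leq n$, and the nontrivial step is showing that the transitive compatibility from Proposition~\ref{fraexcolim} propagates this information upward through the tower, forcing the witnessing subgroups to be full on the off-diagonal Sylows as well. The p.e.c.\ step is by contrast a fairly direct adaptation of the PAC argument from Proposition~\ref{perfpac}, though one must take care in setting up the correct varieties over $K_n$ from the $B_n$-reduct of the given regular extension.
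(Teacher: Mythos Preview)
Your forward direction and the $B$-closedness half of the backward direction track the paper's proof closely (the paper's Claim~1 is exactly your compatibility argument, phrased as: all the subgroups $\mathcal{G}_n\cap\gal(K)$ coincide with a single $\mathcal{H}$, and the Frattini property at level $n$ on the $p_n$-Sylow gives $\mathcal{H}_{(p_n)}=\gal(K)_{(p_n)}$ for every $n$).

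Your p.e.c.\ argument, however, diverges from the paper, and as literally written it has a gap. The phrase ``the $K_n$-locus of the $B_n$-orbit of the witness'' does not work: the extension $K_n\subseteq L$ is \emph{not} regular (the intermediate field $K$ is algebraic over $K_n$), so that locus need not be absolutely irreducible and a bare PAC appeal fails. What does work is to pass through the bi-interpretation explicitly. Fix a $K_n$-basis of $K$; since $K\cap L^{B_n}=K_n$ and $[L:L^{B_n}]=|B_n|$, one has $L\cong K\otimes_{K_n}L^{B_n}$, so the \emph{same} basis interprets the $B_n$-field $L$ over $L^{B_n}$. The formula $\varphi$ then translates to a single ring formula $\psi$ over $K_n$, and the witness in $L$ yields a witness for $\psi$ in $L^{B_n}$. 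Now $K_n\subseteq L^{B_n}$ \emph{is} regular (algebraic closedness from regularity of $K\subseteq L$, separability from perfection of $K_n$), and PAC fields are existentially closed in regular extensions, so $K_n\models\exists y\,\psi(y)$ and hence $K\models\exists x\,\varphi(x)$.

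The paper does something considerably more elaborate for this step: it constructs an auxiliary descending tower of algebraic extensions $K'_{(n)}:= (K^{\alg})^{\widetilde{B_n}}\cdot K$ of $K$, shows each $K'_{(n)}$ is an e.c.\ $B_n$-field (invoking the finite-group case), proves $\bigcap_n K'_{(n)}=K$, and then uses a separate lemma (Proposition~\ref{pacint}) about rational points over the intersection of a tower whose limit is PAC to pull a witness down to $K$. Once the bi-interpretation is set up as above, your route bypasses this tower and Proposition~\ref{pacint} entirely; the paper's approach has the side benefit of exhibiting e.c.\ $B$-fields as limits of e.c.\ $B_n$-fields, but for the bare equivalence your argument is more economical.
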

\begin{proof} For the implication ``$\Rightarrow$'', we notice first that any e.c. $B$-field is strict and perfect (see e.g. \cite[Lemma 3.1]{HK3} and \cite[Lemma 3.4]{HK3}, which hold for an arbitrary group), so we get Item $(1)$. Item $(2)$ follows from Corollary \ref{constpac}. We proceed to show Item $(3)$. Let us fix $n\in \Nn$. By Theorem \ref{mgt}(1), the profinite group $\gal(K_n)$ is pronilpotent, hence we have:
$$\gal(K_n)=\prod_{p\in \Pp}\gal(K_n)_{(p)}.$$
We need to show that for any $j>0$, we have:
\begin{enumerate}
\item[(i)] if $j\leqslant n$, then
$$\gal(K_n)_{(p_j)}\cong \widetilde{B_{(p_j)}};$$

\item[(ii)] if $j> n$, then
$$\gal(K_n)_{(p_j)}\cong \ker\left(\widetilde{B_{(p_j)}}\ra B_{(p_j)}\right).$$
\end{enumerate}
Since $K_n$ is PAC, we get that $\gal(K_n)$ is projective as well as $\gal(K_n)_{(p)}$ for each $p\in \Pp$. In the situation of Item (i), we get what we want directly from Theorem \ref{mgt}(2).

In the situation of Item (ii), we consider the following short exact sequence:
\begin{equation*}
 \xymatrix{1 \ar[r]^{}  &  \gal(K_n)  \ar[r]^{} & \gal(K_j)\ar[r]^{\mathrm{res}\ \ \ \ }  & \gal(K_n/K_j) \ar[r]^{} & 1.}
\end{equation*}
Since $K_j=K^{B_j}$ and $K_n=K^{B_n}$, we get (using that $K$ is a strict $B$-field) the following:
$$\gal(K_n/K_j)\cong B_j/B_n\cong B_{(p_{n+1})}\oplus \ldots \oplus B_{(p_j)}.$$
By the isomorphism above and Theorem \ref{mgt}(2), we get the following  short exact sequence:
\begin{equation*}
 \xymatrix{1 \ar[r]^{}  &  \gal(K_n)_{(p_j)}  \ar[r]^{} & \gal(K_j)_{(p_j)}\cong \widetilde{B_{(p_j)}} \ar[r]^{\ \ \ \ \ \ \ \ \mathrm{res}}  & B_{(p_j)} \ar[r]^{} & 1,}
\end{equation*}
which gives the desired description of $\gal(K_n)_{(p_j)}$.
\\
\\
For the implication ``$\Leftarrow$'', let us assume that $K$ is a $B$-field satisfying the conditions $(1)$--$(3)$ above. We need the following conclusion of the Galois axioms in this case.
\\
\\
{\bf Claim 1}
\\
$K$ is $B$-closed.
\begin{proof}[Proof of Claim 1]
Let $K\subseteq K'$ be an algebraic $B$-field extension. We aim to show that $K'=K$. For each $n>0$, $K\subseteq K'$ is an algebraic $B_n$-field extension. Let $K_n':=(K')^{B_n}$ and $\mathcal{G}_n\leqslant \gal(K_n)$ be a closed subgroup such that
$$K_n'=\left(K^{\alg}\right)^{\mathcal{G}_n}.$$
Let
$$\mathrm{res}_n:\gal(K_n)\ra B_n=\gal(K/K_n)$$
be the restriction map. By Lemma \ref{fraextbest}, we obtain that
$$\mathrm{res}_n(\mathcal{G}_n)=B_n.$$
By our assumption, the profinite group $\gal(K_n)$ is pronilpotent and for each $t\leqslant n$ the map
$$\mathrm{res}_n:\gal(K_n)_{(p_t)}\ra B_{(p_t)}=\gal(K/K_n)_{(p_t)}$$
is a (necessarily universal) Frattini cover. Hence, for each $t\leqslant n$, we obtain:
$$(\mathcal{G}_n)_{(p_t)}=\widetilde{B_{(p_t)}}=\gal(K_n)_{(p_t)}.$$
In particular, for any $n>0$ we obtain (by taking $t=n$):
\begin{equation}
(\mathcal{G}_n\cap \gal(K))_{(p_n)}=\widetilde{B_{(p_n)}}\cap \gal(K)=\ker\left(\widetilde{B_{(p_n)}} \ra B_{(p_n)}\right)=\gal(K)_{(p_n)}.\tag{$*$}
\end{equation}
However, for each $n>0$, we have (see Lemma \ref{fraextbest}):
$$\left(K^{\alg}\right)^{\mathcal{G}_n\cap \gal(K)}=K'.$$
Hence, there is a closed subgroup $\mathcal{H}\leqslant \gal(K)$ such that for every $n>0$ we have:
$$\mathcal{H}=\mathcal{G}_n\cap \gal(K),\ \ \ \ K'=\left(K^{\alg}\right)^{\mathcal{H}}.$$
By $(*)$, we get that for each $n>0$:
$$\mathcal{H}_{(p_n)}=\gal(K)_{(p_n)}.$$
Therefore, $\mathcal{H}=\gal(K)$ and $K=K'$, which we needed to show.
\end{proof}
The next claim is just a restatement of the Galois axioms.
\\
\\
{\bf Claim 2}
\\
For any $n>0$, we have the following commutative diagram with exact rows (where, for clarity, we skip the trivial groups):
\begin{equation*}
 \xymatrix{\prod_{t>0}\ker\left(\widetilde{B_{(p_t)}} \to B_{(p_t)}\right)  \ar[r]^{<\ \ \ } &
 \widetilde{B_n}\times \prod_{t>n}\ker\left(\widetilde{B_{(p_t)}} \to B_{(p_t)}\right) \ar[r]^{}   & B_{n}   \\
        \gal(K)  \ar[u]_{\cong}  \ar[r]^{<\ \ \ } & \gal(K_{n}) \ar[r]^{} \ar[u]_{\cong} & \gal(K/K_n) \ar[u]_{\cong} .}
\end{equation*}
From now on, we identify all the isomorphic objects appearing in Claim 2. For $n>0$, we define:
$$K_{(n)}:=\left(K^{\alg}\right)^{\widetilde{B_n}}.$$
Since we have:
$$\widetilde{B_n} \cdot  \prod_{t>0}\ker\left(\widetilde{B_{(p_t)}}\to B_{(p_t)}\right)=
\widetilde{B_n}\times \prod_{t>n}\ker\left(\widetilde{B_{(p_t)}} \to B_{(p_t)}\right),$$
we get by Claim 2 that:
$$\gal\left(K_{(n)}\right) \cdot \gal(K) =\gal(K_n),$$
hence we obtain:
$$K_{(n)}\cap K=K_n.$$
Since $K_n\subseteq K$ is a finite Galois extension, we obtain that $K_{(n)}$ is linearly disjoint from $K$ over $K_n$ (see \cite[Corollary 2.5.2]{FrJa} and the discussion below its proof). We define now:
$$K_{(n)}':=K_{(n)}K\cong K_{(n)}\otimes_{K_n}K.$$
From the isomorphism above, $K_{(n)}'$ is naturally a $B_n$-field extension of $K$ and we also note that $K_{(n)}'$ is strict and perfect.
\\
\\
{\bf Claim 3}
\\
The fields $K_{(n)}'$ form a decreasing tower and we have the following:
$$\bigcap_{n=1}^{\infty}K_{(n)}'=K.$$
\begin{proof}[Proof of Claim 3]
From the definition of the field $K_{(n)}'$ and Claim 2, we obtain that:
$$\gal\left(K_{(n)}'\right)=\gal(K)\cap \gal(K_{(n)})=\ker\left(\widetilde{B_n}\to B_n\right).$$
Hence, we get (``cl'' below denotes the topological closure inside the profinite group $\gal(K)$):
\begin{IEEEeqnarray*}{rCl}
\mathrm{cl}\left(\bigcup_{n=1}^{\infty} \gal\left(K_{(n)}'\right)\right) & = & \mathrm{cl}\left(\bigcup_{n=1}^{\infty} \ker\left(\widetilde{B_n}\to B_n\right)\right) \\
 &= & \ker\left( \widetilde{\widehat{B}} \to \widehat{B} \right)\\
 &= & \gal(K),
\end{IEEEeqnarray*}
which yields the claim by the Galois theory.
\end{proof}
By Claim 3, we see that the fields $K_{(n)}'$ approximate our field $K$. The next claim says that they also ``logically approximate'' $K$, in the sense that these fields have better and better model-theoretic properties.
\\
\\
{\bf Claim 4}
\\
For each $n>0$, we have:
$$K_{(n)}'\models B_n-\mathrm{TCF}.$$
\begin{proof}[Proof of Claim 4]
From the definition of the $B_n$-field $K_{(n)}'$, it follows that:
$$\left(K_{(n)}'\right)^{B_n}=K_{(n)}.$$
Since $K_n\subseteq K_{(n)}$ is an algebraic field extension and $K_n$ is PAC, we get that $K_{(n)}$ is PAC as well. By the definition of $K_{(n)}$, we get that
$$\gal\left(K_{(n)}\right)\cong \widetilde{B_n},$$
hence, by Proposition \ref{finitecr}, we obtain that the $B_n$-field $K_{(n)}'$ is existentially closed.
\end{proof}
We are ready to show that $K$ is an existentially closed $B$-field. Let us take a quantifier-free $L_{B}$-formula $\varphi(x)$ over $K$ and a $B$-field extension $K\subseteq K'$ such that:
$$K'\models \exists x\varphi(x).$$
We aim to show that $K\models \exists x\varphi(x)$.

Let $N>0$ be such that $\varphi(x)\in L_{B_N}$. Since $K$ is $B$-closed (Claim 1), the field extension $K\subseteq K'$ is regular. Let us take an arbitrary $n\geqslant N$. Since the field extension $K\subseteq K'_{(n)}$ is algebraic, $K'$ is linearly disjoint from $K'_{(n)}$ over $K$ (by the definition of regular extensions). Therefore, we have
$$K'K'_{(n)}\cong K'\otimes_KK'_{(n)}$$
and the field $K'K'_{(n)}$ has a natural $B_n$-field structure extending those on $K'$ and $K'_{(n)}$ over $K$. Since the formula $\varphi(x)$ is quantifier-free, we have $K'K'_{(n)}\models \exists x\varphi(x)$. Since $K_{(n)}'$ is an existentially closed $B_n$-field (Claim 4), we have $K_{(n)}'\models \exists x\varphi(x)$.

For each $n\geqslant N$, the $B_N$-field $K'_{(n)}$ is bi-interpretable with the pure field $(K'_{(n)})^{B_N}$ (see \cite[Remark 2.3]{HK3}). To proceed, we need the following claim. The notion of ``uniform bi-interpretability'' from this claim will be explained in the beginning of its proof.
% and this explanation will also make sure that the (crucial!) moreover part of the claim holds as well.
In this claim, we also set $K_{(\infty)}':=K$ and $K_{(\infty)}:=K_N$.
\\
\\
{\bf Claim 5}
\\
The bi-interpretability between the $B_N$-field $K'_{(n)}$ and the pure field $(K'_{(n)})^{B_N}$ is uniform with respect to $n\in \{N,N+1,\ldots,\infty\}$. In particular, there is a quantifier-free formula $\psi(y)$ in the language of fields with parameters from $K_N$ such that for all $n\in \{N,N+1,\ldots,\infty\}$ we have:
$$K_{(n)}'\models \exists x\varphi(x)\ \ \ \ \Leftrightarrow\ \ \ \ \left(K_{(n)}'\right)^{B_N}\models \exists y\psi(y).$$
\begin{proof}[Proof of Claim 5]
If $G$ is a finite group of order $e$, $F$ is a $G$-field, and $M:=F^G$, then (see \cite[Remark 2.3]{HK3}) there are $M$-bilinear maps
$$m,a:M^e\times M^e\ra M^e$$
such that $(M^e,m,a)$ is naturally bi-interpretable with the field $F$. Similarly, in the case of the $G$-action, there are $M$-linear maps
$$g_1,\ldots,g_e:M^e\ra M^e$$
such that $(M^e,m,a,g_1,\ldots,g_e)$ is bi-interpretable with the $G$-field $F$.

To prove our claim, it is enough to show that there are fixed $K_N$-bilinear maps, which give $K'_{(n)}$ the $B_N$-field structure for each $n\in \{N,N+1,\ldots,\infty\}$ (we use the above observation for a fixed $G=B_N$ and where $F=K'_{(n)}$ and $M=(K'_{(n)})^{B_N}$ vary with $n$). To this end, it is enough to show that for all $n\in \{N,N+1,\ldots,\infty\}$ we have:
\begin{equation}
K'_{(n)}\cong \left(K'_{(n)}\right)^{B_N}\otimes_{K_N}K.\tag{$\dagger$}
\end{equation}
We have the following commutative diagram\footnote{We thank Junguk Lee for drawing a version of this diagram for us.} of field extensions, where the arrows are the inclusions and the Galois groups are indicated over some of the arrows:
\begin{equation*}
 \xymatrix{ & K_{(N)}' & &    \\
 & K_{(n)}' \ar[u]^{} & &   \\
K \ar[ru]^{\prod_{t> n}\ker} &  & \left(K_{(n)}'\right)^{B_N} \ar[lu]_{B_N} &  \\
& K_N \ar[ru]_{\prod_{t> n}\ker} \ar[lu]^{B_N} . }
%&  & \left(K_{(n)}'\right)^{B_n}=K_{(n)} \ar[lu]^{}
\end{equation*}
Thanks to the diagram above, we see that:
\begin{itemize}
\item the field $K'_{(n)}$ is the compositum of the fields $\left(K'_{(n)}\right)^{B_N}$ and $K$;

\item the fields $\left(K'_{(n)}\right)^{B_N}$ and $K$ are linearly disjoint over $K_N$.
\end{itemize}
Hence, we get the isomorphism from $(\dagger)$ above.
\end{proof}
Let us take the quantifier-free formula $\psi(y)$ in the language of fields from Claim 5. This formula corresponds to (or even: ``this formula \emph{is}'', see Remark \ref{pacintrem}(1)) a constructible set $V$ defined over $K_N$. By Claim 5, it is enough to show that $V(K_N)\neq \emptyset$. By Claim 5 again, we get that for each $n\geqslant N$, we have 
$$V\left(\left(K'_{(n)}\right)^{B_N}\right)\neq \emptyset.$$ 
By Claim 3, we obtain that:
$$\bigcap_{n=N}^{\infty}\left(K'_{(n)}\right)^{B_N}=K_N.$$
Therefore, Remark \ref{pacintrem}(1) implies that $V(K_N)\neq \emptyset$, which finishes the proof thanks to Claim 5. (One could also arrange the original formula $\varphi(x)$ in such a way that the resulting formula $\psi(x)$ defines a variety. Then, using Proposition \ref{pacint} (rather than Remark \ref{pacintrem}(1)) would be enough.)
\end{proof}
\begin{remark}\label{galoisok}
As noted in the Introduction (below Definition \ref{galaxioms}), to see that the Galois axioms from Theorem \ref{axioms} are first-order, it is enough to show that all the absolute Galois groups $\gal(K_i)$ appearing there are small. It is clear that if the profinite group $\mathcal{G}$ is the product of its $p$-Sylow subgroups $\mathcal{G}_{(p)}$, then $\mathcal{G}$ is small if and only if each $\mathcal{G}_{(p)}$ is small. By Theorem \ref{axioms}, the profinite groups $\gal(K_i)_{(p)}$ are small, because they are topologically finitely generated.

Therefore, for a torsion Abelian group $B$ such that for all $p\in \Pp$, the $p$-power torsion subgroup $B_{(p)}$ is finite, we get that the theory $B$-TCF exists and it is axiomatised by Galois axioms from the statement of Theorem \ref{axioms}.
\end{remark}
We can conclude now the proof of our main result.
\begin{proof}[Proof of Theorem \ref{superthm}]
Since the left-to-right implication was proved in Section \ref{secneg}, it is enough to show the right-to-left implication and the moreover part of Theorem \ref{superthm}.

For the right-to-left implication, let us assume that for each prime $p$, the $p$-primary part of $A$ is either finite or it is the Pr\"{u}fer $p$-group. We decompose $A$ as:
$$A=A_f\oplus A_{\infty},$$
where for each $p\in \Pp$, we have that $(A_f)_{(p)}$ is finite, and $(A_{\infty})_{(p)}=C_{p^{\infty}}$ or $(A_{\infty})_{(p)}$ is trivial. Let us set:
$$P_{\infty}:=\{p\in \Pp\ |\ (A_{\infty})_{(p)}=C_{p^{\infty}}\}=(p_i)_{i>0},$$
$$P_{0}:=\{q\in \Pp\ |\ (A_f)_{(q)}\neq 0\}=(q_i)_{i>0},$$
and for any $m\in \Nn$ and $n>0$ we define:
$$A^{(m)}:=A_f\oplus\bigoplus_{p\in P_{\infty}}C_{p^{m}},\ \ \ \ \     \left(A_f\right)_n:=\bigoplus_{k=1}^n\left(A_f\right)_{(q_k)},\ \ \ \ \ \left(A^{(m)}\right)_n:=\left(A_f\right)_n\oplus\bigoplus_{k=1}^nC_{p_k^{m}}.$$
Then $A$ is the increasing union of the subgroups $A^{(m)}$ and each $A^{(m)}$ satisfies the assumptions on the group $B$ in the statement of Theorem \ref{axioms}. By Theorem \ref{axioms} and Remark \ref{galoisok}, for each $m\in \Nn$ the theory $A^{(m)}$-TCF exists and it is axiomatized by the Galois axioms. By Fact \ref{inter1} and Proposition \ref{mcfact}, it is enough to show that if $K\models A^{(m+1)}-\mathrm{TCF}$, then $K|_{L_{A^{(m)}}}\models A^{(m)}-\mathrm{TCF}$. The proof of this last assertion does not differ  much from the proof appearing in Example \ref{cpinftyex}. Let us take $n\in \Nn$ and let us set $P_n:=p_1\ldots p_n$. Then, we have:
$$\left(A^{(m)}\right)_n=P_n\left(A^{(m+1)}\right)_n.$$
Let us also denote:
$$K_n:=K^{\left(A^{(m+1)}\right)_n},\ \ \ \ \ K_n':=K^{\left(A^{(m)}\right)_n}.$$
By Theorem \ref{axioms}(3), we get:
\begin{IEEEeqnarray*}{rCl}
\gal(K_n) & \cong & \widetilde{\left(A^{(m+1)}\right)_n}\times \prod_{t=n+1}^{\infty}\ker\left(\widetilde{\left(A^{(m+1)}\right)_{(p_t)}} \ra (A^{(m)})_{(p_t)}\right) \\
 &\cong & \widetilde{\left(A_f\right)_n} \times \prod_{t=n+1}^{\infty}\ker\left(\widetilde{(A_f)_{(q_t)}}\ra (A_f)_{(q_t)} \right) 
 \times \Zz_{P_n}\times \prod_{t=n+1}^{\infty}p_t^{m+1}\Zz_{p_t} \\
 &\cong & \widetilde{\left(A_f\right)_n} \times \prod_{t=n+1}^{\infty}\ker\left(\widetilde{(A_f)_{(q_t)}}\ra (A_f)_{(q_t)} \right)
 \times \prod_{t=1}^{\infty}\Zz_{p_t},
\end{IEEEeqnarray*}
where $\Zz_{P_n}$ denotes $\Zz_{p_1}\times \ldots \times \Zz_{p_n}$ (by \cite[Corollary 22.7.8]{FrJa}, for each $p\in \Pp$ we have $\widetilde{C_p}=\Zz_p$, hence $\widetilde{C_{P_n}}=\Zz_{P_n}$). We have the following commutative diagram with exact rows (generalizing the one from Example \ref{cpinftyex}):
\begin{equation*}
 \xymatrix{1 \ar[r]^{}  &  \gal(K) \ar[rr]^{<} & &   \gal(K_n)  \ar[rr]^{\mathrm{res}\ \ \ \ }  &  &  \left(A^{(m+1)}\right)_n \ar[r]^{} & 1 \\
1 \ar[r]^{}  &  \gal(K) \ar[u]_{=}\ar[rr]^{<} & &  \gal(K_n') \ar[u]_{<} \ar[rr]^{\mathrm{res}\ \ \ \ } &  &  P_n\left(A^{(m+1)}\right)_n  \ar[u]_{<}  \ar[r]^{}  & 1.}
\end{equation*}
Therefore, we obtain:
\begin{IEEEeqnarray*}{rCl}
\gal\left(K_n'\right) & = & \mathrm{res}^{-1}\left(P_n(A^{(m+1)})_n\right) \\
 &\cong & \widetilde{\left(A_f\right)_n}\times \prod_{t=n+1}^{\infty}\ker\left(\widetilde{(A_f)_{(q_t)}}\ra (A_f)_{(q_t)} \right) \times P_n\Zz_{P_n}\times \prod_{t=n+1}^{\infty}p_t^{m+1}\Zz_{p_t}\\
 &\cong & \widetilde{\left(A_f\right)_n} \times \prod_{t=n+1}^{\infty}\ker\left(\widetilde{(A_f)_{(q_t)}}\ra (A_f)_{(q_t)} \right)
 \times \prod_{t=1}^{\infty}\Zz_{p_t}\\
  &\cong &     \widetilde{\left(A^{(m)}\right)_n}\times \prod_{t>n}\ker\left(\widetilde{\left(A^{(m)}\right)_{(p_t)}} \ra (A^{(m)})_{(p_t)}\right).
\end{IEEEeqnarray*}
By Theorem \ref{axioms}(3), we get that $K|_{L_{A^{(m)}}}\models A^{(m)}-\mathrm{TCF}$.

For the moreover part, we need to show that the theory $A-\mathrm{TCF}$ is strictly simple (that is: simple, not stable, and not supersimple) for $A$ infinite. For simplicity, by Proposition \ref{mcfact} it is enough to show that each theory $A^{(m)}$-TCF is simple. We use \cite[Corollary 4.31]{Hoff3}, which (very conveniently for us) says that for any group $G$, if the theory $G-\mathrm{TCF}$ exists, then it is simple if and only if the underlying fields of its models are bounded. Let us take $K\models A^{(m)}-\mathrm{TCF}$. By Theorem \ref{axioms}, we have:
$$\gal(K)\cong \prod_{p\in \Pp}\ker\left(\widetilde{\left(A^{(m)}\right)_{(p)}} \ra \left(A^{(m)}\right)_{(p)}\right).$$
Each universal Frattini cover above is a small profinite group being finitely generated. Hence each kernel above is small as well being an open subgroup of a small profinite group. Therefore, $\gal(K)$ is small, since all its pro-$p$ components are small. As a result, the field $K$ is bounded and the theory $A-\mathrm{TCF}$ is simple. Since $K$ is PAC and not separably closed (if $A\neq 0$), by \cite[Fact 2.6.7]{Kimsim} the theory of the pure field $K$ is not stable, so the theory $A-\mathrm{TCF}$ is also not stable. To see that $A-\mathrm{TCF}$ is not supersimple (if $A$ is infinite), it is enough to look at any strictly increasing sequence of finite subgroups of $A$ and consider the corresponding strictly decreasing tower of definable subfields of invariants of $K$.
\end{proof}
\begin{remark}
\begin{enumerate}
\item  Example \ref{cpinftyex} can be generalized in the following way. Let us take $A$ satisfying the equivalent conditions from Remark \ref{equivcond} and let $K\models A-\mathrm{TCF}$. From the above proof of the right-to-left implication in Theorem \ref{superthm}, we have that:
    $$A-\mathrm{TCF} = \bigcup_{m=1}^{\infty} A^{(m)}-\mathrm{TCF}.$$
In particular, $K\models A^{(1)}-\mathrm{TCF}$ and the description of $\gal(K)$ comes from Theorem \ref{axioms}(3), that is:
    $$\gal(K)\cong \ker\left(\widetilde{\widehat{B}}\ra \widehat{B}\right),$$
    where we have:
    $$B:=A_f\oplus \bigoplus_{p\in P_{\infty}}C_p$$
    for $A_f$ and $P_{\infty}$ defined as in the beginning of the proof of Theorem \ref{superthm} in this section.

\item By expressing $A$ as
$$A=\coli_{m,n}\left(A^{(m)}\right)_n,$$
where again the finite subgroups $\left(A^{(m)}\right)_n$ come from the beginning of the proof of Theorem \ref{superthm} in this section, we see that
the theory $A$-TCF is axiomatised by Galois axioms in the sense of Definition \ref{galaxioms}.
\end{enumerate}
\end{remark}
\begin{example}\label{extraq}
We can give now several examples of existentially closed $A$-fields. The ones from Items $(2)$ and $(3)$ below are in the spirit of (but, of course, much easier than) Hrushovski's ``non-standard Frobenius'' from \cite{HrFro}. Let us define:
$$C_{\Pp}:=\bigoplus_{p\in \Pp}C_p,\ \ \ \ \Pp=(p_i)_{i>0},\ \ \ \ P_n:=p_1\ldots p_n.$$
\begin{enumerate}
\item A $C_{\Pp}$-field is a field $K$ with a collection of automorphisms $(\sigma_p)_{p\in \Pp}$ such that for all $p,q\in \Pp$, we have $\sigma_p\sigma_q=\sigma_q\sigma_p$; and for each $p\in \Pp$, we have $\sigma_p^p=\id$. A $C_{\Pp}$-field is strict if and only if for all $p\in \Pp$, we have $\sigma_p\neq \id$. By Theorem \ref{axioms}, it is easy to see that if $K$ is a strict and perfect $C_{\Pp}$-field, then $K$ is e.c. if and only if all the fields of constants
    $$K_n:=\mathrm{Fix}\left(\sigma_{p_1}\right)\cap \ldots \cap \mathrm{Fix}\left(\sigma_{p_n}\right)$$
    are pseudofinite.

  \item Let $q\in \Pp$ and for each $n>0$, we define the following $C_{\Pp}$-field:
  $$\mathbf{K}_{q,n}:=\left(\Ff_{q^{P_n}};\fr^{P_n/p_1},\ldots,\fr^{P_n/p_n},\id,\id,\ldots\right).$$
  Note that each $C_{\Pp}$-field $\mathbf{K}_{q,n}$ is bi-interpretable with the difference field $(\Ff_{q^{P_n}},\fr)$. Let $\mathcal{U}$ be a non-principal ultrafilter on the set of positive integers and
we define the following $\Cc_{\Pp}$-field:
  $$\mathbf{K}_{q}:=\left. \prod_{n>0}\mathbf{K}_{q,n} \middle/  \mathcal{U}. \right.$$
 By {\L}o\'{s} Theorem and Item $(1)$, $\mathbf{K}_{q}$ is an existentially closed $C_{\Pp}$-field of characteristic $q$.

\item Let us define:
      $$\mathbf{K}_{0}:=\left. \prod_{n>0}\mathbf{K}_{p_n,n} \middle/  \mathcal{U}, \right.$$
      where each $C_{\Pp}$-field $\mathbf{K}_{p_n,n}$ comes from Item $(2)$ above.
Similarly as in Item $(2)$, $\mathbf{K}_{0}$ is an existentially closed $C_{\Pp}$-field of characteristic $0$.

\item Let us take $q\in \Pp$ and define:
$$\mathcal{H}:=\prod_{p\in \Pp}p\Zz_p<\widehat{\Zz}\cong \gal(\Ff_q),\ \ \ \ K:=\left(\Ff_q^{\alg}\right)^{\mathcal{H}}.$$
Then, we have:
$$\gal(K/\Ff_q)\cong \widehat{\Zz}/\mathcal{H}\cong \prod_{p\in \Pp}C_p \cong \widehat{C_{\Pp}}.$$
Hence, $K$ becomes naturally a $C_{\Pp}$-field. It is clear from Item $(1)$ that $K$ is an e.c. $C_{\Pp}$-field.
\end{enumerate}
\end{example}

\begin{remark}
\begin{enumerate}
\item We note that if we consider in Example \ref{extraq}(2) the bi-interp\-retable difference field $(\Ff_{q^{P_n}},\fr)$ and take the ultraproduct in the language of difference fields, then the result is completely different: we would obtain a pseudofinite field of characteristic $q$ with the Frobenius automorphism.

\item However, if we do the same in Example \ref{extraq}(3), then we get a pseudofinite field of characteristic $0$ with an automorphism of an infinite order, which should be \emph{generic} in some sense. More precisely, we consider the following difference fields:
$$\mathbf{K}_n:=\left(\Ff_{q_n},\fr\right),\ \ \ \ q_n:=p_n^{P_n}=p_n^{p_1\ldots p_n}$$
and their non-principal ultraproduct.
%Such ultraproducts seem to be worth studying. Difference fields with underlying fields being pseudofinite
Very similar difference fields were considered in \cite{Zou}, that is: the difference fields from \cite{Zou} are also ultraproducts of finite Frobenius difference fields, but the order of growth of the cardinality of the finite fields in \cite{Zou} seems to be much faster than in our case.
%\cite{Rytenphd} and ..., but we the above situation was not taken into account in these works (models of the theories $\mathrm{PSF}_{(m,n,p)}$ are studied %in \cite{Rytenphd}, but they are of positive characteristic and without transcendence difference elements).
%Such difference fields were also studied in \url{https://www.worldscientific.com/doi/10.1142/S0219061319930012} \textbf{retracted!}
%\\
%but we could not find this particular difference field considered...
%\\
%{\bf Check again!!}
%\item Regarding the Galois axioms of the theory $C_{\Pp}-\mathrm{TCF}$ discussed in Example \ref{extraq}(1), it may be enough to assume that the ground %field $K$ is pseudofinite, since then the field of constants $K_n$ are possibly automatically pseudofinite as well. This is why we ask the following %question below.
\end{enumerate}
\end{remark}
%\begin{question}
%Suppose that $K\subseteq L$ is a finite Galois extension and $L$ is a pseudofinite field. Is $K$ a pseudofinite field as well?
%\end{question}

\section{Miscellaneous results}\label{secmisc}
In this section, we collect some results about model theory of group actions on fields, which did not fit to the course of the proof of the main result of this paper (Theorem \ref{superthm}). More precisely, we:
\begin{itemize}
\item provide another argument for the non-existence of the theory $A$-TCF for certain torsion Abelian groups $A$,

\item describe what we are able to prove for groups containing $\Zz\times \Zz$,

\item discuss briefly the case of non-torsion Abelian groups.
\end{itemize}

\subsection{Another negative argument}
In this subsection, we present briefly a different negative argument in the special case of $A=C_p^{(\omega)}$. This was our original argument and we think that it may have an independent interest. Intuitively, the crucial property implying the non-existence of the theory $A-\mathrm{TCF}$ here is some kind of an \emph{auto-duplication} of $A$ inside $A$, i.e. $A$ has a proper subgroup, which is isomorphic to $A$.

We present $C_p^{(\omega)}$ as $\coli C_p^{n}$ and for a $C_p^{(\omega)}$-field $K$, we set as usual $K_n:=K^{C_p^{n}}$.
Let $C=\bigcap_n K_n$ be the field of absolute constants of $K$. By a compactness argument and the Galois theory, we obtain the following.
\begin{lemma}\label{algai}
Suppose that $K$ is a strict $C_p^{(\omega)}$-field, which is $\aleph_0$-saturated. Then, there are $a_1,a_2,\ldots \in K$ such that for each $n$, we have:
\begin{itemize}
\item $K=K_n(a_1,\ldots,a_n)$;
  %\item $K=C(a_1,a_2,\ldots)$
  %\\
  %{\bf this one I am not sure about, but may be not necessary!};
\item $[C(a_n):C]=p$;

\item the extension $C\subseteq C(a_1,\ldots,a_n)$ is Galois and
$$\gal(C(a_1,\ldots,a_n)/C)\cong C_p^n.$$
\end{itemize}
\end{lemma}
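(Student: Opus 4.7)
The plan is to construct $a_n$ by forcing its $C_p^{(\omega)}$-orbit to be the ``right'' one. Write $\sigma_i$ for the generator of the $i$-th summand of $C_p^{(\omega)}$ and $H_n := \{g \in C_p^{(\omega)} : g_n = 0\}$ for the hyperplane, so $C_p^{(\omega)}/H_n \cong C_p$ is generated by the image of $\sigma_n$. I would produce $a_n \in K$ with stabiliser exactly $H_n$ and then let Artin's theorem do the work. To get $a_n$, apply $\aleph_0$-saturation to the (countable) partial type $\{\sigma_i(x) = x : i \ne n\} \cup \{\sigma_n(x) \ne x\}$: for any finite $I \not\ni n$, strictness makes the finite group $\langle \sigma_i : i \in I \cup \{n\}\rangle$ act faithfully on $K$, so by Artin $[K^{G_I}:K^{G_{I\cup\{n\}}}]=p$, hence $\sigma_n$ acts non-trivially on $K^{G_I}$ and any element of $K^{G_I} \setminus K^{G_{I\cup\{n\}}}$ realises the finite fragment. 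Saturation then yields $a_n$, and since $H_n$ is the unique proper subgroup of $C_p^{(\omega)}$ strictly containing $\{g : \sigma_i(a_n)=a_n \text{ for the chosen } i\ne n\}$ and not containing $\sigma_n$, the stabiliser of $a_n$ is exactly $H_n$.

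Now verify the three bullets. The orbit of $a_n$ is $\{a_n,\sigma_n(a_n),\ldots,\sigma_n^{p-1}(a_n)\}$, a set of $p$ distinct elements; the field $F_n := C(a_n,\sigma_n(a_n),\ldots,\sigma_n^{p-1}(a_n))$ is stable under $C_p^{(\omega)}$, which acts through $C_p^{(\omega)}/H_n \cong C_p$ faithfully, and whose fixed field is $F_n \cap K^{C_p^{(\omega)}} = F_n \cap C = C$. Artin gives $[F_n:C] = p$, so $F_n = C(a_n)$ and $C(a_n)/C$ is Galois of group $C_p$. For the joint Galois statement, the kernel of the $C_p^{(\omega)}$-action on $C(a_1,\ldots,a_n)$ is $\bigcap_{i\le n} H_i$, with quotient $C_p^n$ (projection onto the first $n$ coordinates); the same Artin argument yields $[C(a_1,\ldots,a_n):C] = p^n$ and Galois group $C_p^n$.

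Finally, the subgroup $\langle \sigma_1,\ldots,\sigma_n\rangle \hookrightarrow C_p^{(\omega)}$ maps isomorphically onto the quotient $C_p^{(\omega)}/\bigcap_{i\le n} H_i$, so its fixed field on $C(a_1,\ldots,a_n)$ is $C$, i.e.\ $C(a_1,\ldots,a_n) \cap K_n = C$. Because $C(a_1,\ldots,a_n)/C$ is Galois, this meeting-in-$C$ condition gives linear disjointness of $C(a_1,\ldots,a_n)$ and $K_n$ over $C$, so $[K_n(a_1,\ldots,a_n):K_n] = p^n$; since $[K:K_n] = p^n$ (strictness plus Artin applied to $\langle \sigma_1,\ldots,\sigma_n\rangle$), we obtain $K = K_n(a_1,\ldots,a_n)$. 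The only genuinely delicate step is the first one, namely identifying the stabiliser of the realised type with $H_n$ and then confirming that the orbit field is already $C(a_n)$ rather than a larger extension; once this is in hand, the rest is bookkeeping with the subgroup lattice of $C_p^{(\omega)}$ and repeated appeals to Artin.
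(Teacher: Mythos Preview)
Your argument is correct and is precisely the ``compactness argument and Galois theory'' that the paper invokes without giving details: saturation produces an $a_n$ fixed by the hyperplane $H_n$ but moved by $\sigma_n$, maximality of $H_n$ pins down the stabiliser, and Artin's theorem handles all the degree computations. The only place where your write-up is a bit opaque is the sentence identifying the stabiliser of $a_n$; it suffices to say that the stabiliser contains $H_n$ by construction, does not contain $\sigma_n$, and $H_n$ is a maximal proper subgroup of $C_p^{(\omega)}$, so equality is forced.
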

%From that we would get (universal Frattini cover argument) that:
%$$\gal(K/C)=C_p^{\times \omega},\ \ \ \ \gal(C)=\widehat{F_{\omega}(p)}.$$
%This should help to show the following.
Using Lemma \ref{algai}, we can show the following improvement of Lemma \ref{contepi} in this case (note that the profinite completion  $ C_p^{\times \omega}=\widehat{C_p^{(\omega)}}$ is not small).
\begin{prop}\label{faithgal}
Suppose that $K$ is an $\aleph_0$-saturated and strict $C_p^{(\omega)}$-field. Consider the following commutative diagram from Lemma \ref{contepi}:
\begin{equation*}
 \xymatrix{C_p^{\times \omega} \ar[rr]^{\alpha\ \ \ \ \ \ }  &  &  \gal(C^{\alg}\cap K/C) \\
C_p^{(\omega)} \ar[u]_{\iota} \ar[rr]^{\varphi\ \ \ \ \ \ } &  &   \aut(K/C) \ar[u]_{\mathrm{res}}.}
\end{equation*}
Then the map $\alpha$ is an isomorphism of profinite groups.
\end{prop}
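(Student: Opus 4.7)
The plan is to prove injectivity of $\alpha$; surjectivity is already given by Lemma~\ref{contepi}. The strategy is Pontryagin duality: since $C_p^{\times\omega}$ is a profinite abelian group, $\alpha$ is injective if and only if the dual map on continuous characters is surjective, i.e.\ every continuous character $C_p^{\times\omega}\to C_p$ factors through $\alpha$. Any such character has open kernel, hence factors through a finite coordinate projection, so the continuous character group of $C_p^{\times\omega}$ is the $\Ff_p$-vector space with basis $(\varepsilon_i)_{i\geq 1}$, where $\varepsilon_i$ is projection to the $i$-th coordinate. It therefore suffices to construct, for each $i$, a continuous character $\chi_i:\gal(C^{\alg}\cap K/C)\to C_p$ with $\chi_i\circ\alpha=\varepsilon_i$.

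To produce $\chi_i$, let $\sigma_j$ denote the action on $K$ of the $j$-th standard generator of $C_p^{(\omega)}$, fix $i$, and consider the partial type over $\emptyset$
$$p_i(x)\ :=\ \{\sigma_j(x)=x:j\in\Nn,\ j\neq i\}\ \cup\ \{\sigma_i(x)\neq x\}.$$
The key point is that $p_i$ is finitely satisfiable in $K$: for any finite $F\subseteq\Nn\setminus\{i\}$, the finite subgroup $H:=\langle\sigma_j:j\in F\rangle\cong C_p^{|F|}$ of $\aut(K)$ acts faithfully by strictness, so $K/K^H$ is Galois with group $H$; since $\sigma_i$ commutes with $H$ it stabilises $K^H$, and since $\sigma_i\notin H$ (the generators being $\Ff_p$-linearly independent in $C_p^{(\omega)}$) its restriction to $K^H$ is non-trivial, which yields a witness. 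By $\aleph_0$-saturation of $K$, the type $p_i$ is realised by some $b_i\in K$.

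The $C_p^{(\omega)}$-orbit of $b_i$ is then exactly $\{\sigma_i^{k}(b_i):0\leq k<p\}$, of size $p$, so the orbit polynomial $\prod_{k=0}^{p-1}(T-\sigma_i^{k}(b_i))$ is a separable polynomial of degree $p$ with coefficients in $C$. Hence $C(b_i)\subseteq C^{\alg}\cap K$, the extension $C(b_i)/C$ is Galois of degree $p$, and $\gal(C(b_i)/C)=\langle\sigma_i|_{C(b_i)}\rangle\cong C_p$. Taking $\chi_i$ to be the restriction map $\gal(C^{\alg}\cap K/C)\twoheadrightarrow\gal(C(b_i)/C)\cong C_p$, the composition $\chi_i\circ\alpha$ sends $\sigma_j$ to $0$ for $j\neq i$ and sends $\sigma_i$ to a generator of $C_p$; therefore $\chi_i\circ\alpha=\varepsilon_i$ on the dense subgroup $C_p^{(\omega)}\subset C_p^{\times\omega}$, and by continuity on all of $C_p^{\times\omega}$. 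Consequently, if $g\in\ker(\alpha)$ then $\varepsilon_i(g)=\chi_i(\alpha(g))=0$ for every $i$, forcing $g=0$.

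The main technical point I anticipate is the finite-satisfiability verification: one must carefully separate the abstract subgroup $H\leq C_p^{(\omega)}$ from its faithful image in $\aut(K)$ and apply finite Galois theory to the finite extension $K/K^H$ sitting inside the (likely transcendental) extension $K/C$. Beyond that, the argument is a clean dual-sided exploitation of Lemma~\ref{contepi}: one builds independent degree-$p$ subextensions of $C\subseteq C^{\alg}\cap K$ witnessing that every coordinate functional on the completion factors through~$\alpha$.
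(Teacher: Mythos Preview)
Your proof is correct. Both approaches hinge on the same idea: use $\aleph_0$-saturation to locate, for each coordinate $i$, an element in $K$ that is fixed by $\sigma_j$ for $j\neq i$ but moved by $\sigma_i$, thereby producing a degree-$p$ Galois subextension of $C\subseteq C^{\alg}\cap K$ detecting the $i$-th coordinate. The paper does this via Lemma~\ref{algai}, which is a slightly stronger compactness statement yielding $a_1,a_2,\ldots$ simultaneously with $\gal(C(a_1,\ldots,a_n)/C)\cong C_p^n$, and then assembles an explicit inverse $\varphi=\varprojlim_n\varphi_n$ to $\alpha$. You instead realise a separate type for each $i$ and conclude by a separation-of-points (equivalently, Pontryagin duality) argument. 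Your route is marginally more economical, since you do not need the linear independence of the extensions $C(b_i)/C$ that Lemma~\ref{algai} provides; the paper's route gives a concrete inverse map. One presentational point: the step from ``the orbit polynomial is separable of degree $p$ over $C$'' to ``$C(b_i)/C$ is Galois of degree $p$'' deserves one extra line---apply Artin's lemma to the faithful action of $\langle\sigma_i\rangle$ on $L_i:=K^{\langle\sigma_j:j\neq i\rangle}$ to get $[L_i:C]=p$, whence $C(b_i)=L_i$.
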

\begin{proof}
We take $a_1,a_2,\ldots \in K$ given by Lemma \ref{algai}. Then, for each $n>0$ the extension $C\subset C(a_1,\ldots,a_n)$ is Galois with Galois group being  naturally isomorphic to $C_p^n$. Hence, we obtain the following commutative diagram originating from Lemma \ref{contepi}:
\begin{equation*}
 \xymatrix{  &  & C_p^{\times \omega} \ar[lldd]_{s} \ar[rr]^{\alpha\ \ \ \ \ \ }  &  &  \gal(C^{\alg}\cap K/C) \ar[rrdd]^{\mathrm{res}} &  &  \\
  &  &  C_p^{(\omega)} \ar[u]_{\iota} \ar[rr]^{\varphi\ \ \ \ \ \ } &  &   \aut(K/C) \ar[u]_{\mathrm{res}}  \ar[rrd]^{\mathrm{res}} &  &  \\
  C_p^n  \ar[rrrrrr]^{t_n}  \ar[urr]_{\subset} &  &   &  &   &  & \gal(C(a_1,\ldots,a_n)/C)
  ,}
\end{equation*}
where $s$ is the section of the inclusion map $C_p^n\to C_p^{\times \omega}$ and $t_n$ is an isomorphism. Let $\varphi_n$
denote the following composition map:
\begin{equation*}
 \xymatrix{\gal(C^{\alg}\cap K/C) \ar[rr]^{\mathrm{res}\ \ } &  &   \gal(C(a_1,\ldots,a_n)/C) \ar[rr]^{\ \ \ \ \ \ \ \ \ \ \ \ \ \ t_n^{-1}} &  &  C_p^n.}
\end{equation*}
Then the map:
$$\li_n(\varphi_n):\gal(C^{\alg}\cap K/C)\ra C_p^{\times \omega}$$
is the inverse map to the map $\alpha$ from Lemma \ref{contepi}.
\end{proof}
We need the following general result, which is rather obvious.
\begin{lemma}\label{isoth}
Let $\Phi:G\to H$ be an isomorphism of groups and assume that $G-\mathrm{TCF}$ exists. Then $H-\mathrm{TCF}$ exists and we have:
$$H-\mathrm{TCF}=L_{\Phi}(G-\mathrm{TCF}),$$
where $L_{\Phi}$ is the natural bijection (induced by $\Phi:G\to H$) between the set of all the $L_G$-sentences and the set of all the $L_H$-sentences.
\end{lemma}
The next result uses the ``auto-duplication'' idea alluded to in the beginning of this subsection.
\begin{prop}\label{ecmax}
Suppose that $C_p^{(\omega)}-\mathrm{TCF}$ exists. Then, for any existentially closed $C_p^{(\omega+1)}$-field, its obvious reduct is an existentially closed $C_p^{(\omega)}$-field.
\end{prop}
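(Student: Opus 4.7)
By Remark \ref{pecobs}(1), showing that the reduct $K|$ is e.c.\ as a $C_p^{(\omega)}$-field reduces to verifying two properties: that $K|$ is pseudo-existentially closed, and that $K|$ is $C_p^{(\omega)}$-closed. The first is immediate from Proposition \ref{udipec} applied to the inclusion $C_p^{(\omega)}\leqslant C_p^{(\omega+1)}$, since $K$, being e.c., is in particular p.e.c.\ as a $C_p^{(\omega+1)}$-field. So the real content of the proposition is the second condition.

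For $C_p^{(\omega)}$-closedness, suppose that $K\subseteq L$ is a non-trivial algebraic $C_p^{(\omega)}$-field extension of the reduct; the plan is to lift $L$ to a non-trivial algebraic $C_p^{(\omega+1)}$-field extension of $K$, contradicting $C_p^{(\omega+1)}$-closedness of $K$. Without loss of generality (passing to a Galois subextension inside a Galois closure), I may assume $L/K$ is finite Galois as pure fields. Denote $\mathcal{H}:=\gal(K^{\mathrm{sep}}/L)$, which is normal of finite index in $\gal(K)$, and let $\tau=\sigma_0$ be the extra generator of $C_p^{(\omega+1)}$. Fixing a lift $\tilde\tau\in\aut(K^{\mathrm{sep}})$ of $\tau$, I form the ``$\tau$-symmetrization''
\[
\tilde{\mathcal{H}}:=\bigcap_{i=0}^{p-1}\tilde\tau^{i}\,\mathcal{H}\,\tilde\tau^{-i},
\]
a $\tilde\tau$-stable normal finite-index subgroup of $\gal(K)$. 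The commutativity of $C_p^{(\omega+1)}$ together with the normality of $\mathcal{H}$ in $\gal(K)$ will imply that $\tilde{\mathcal{H}}$ is also stable under conjugation by the lifted $C_p^{(\omega)}$-generators, so $\tilde L:=(K^{\mathrm{sep}})^{\tilde{\mathcal{H}}}$ carries a natural algebraic $C_p^{(\omega+1)}$-field extension structure over $K$ via Lemma \ref{fraextbest} and Proposition \ref{fraexcolim}. Then $C_p^{(\omega+1)}$-closedness of $K$ forces $\tilde L=K$, hence $\tilde{\mathcal{H}}=\gal(K)$, and in turn $\mathcal{H}=\gal(K)$, so $L=K$.

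The main obstacle is to turn the ``invariance of $\tilde{\mathcal{H}}$ under the commuting lifts'' into an actual $C_p^{(\omega+1)}$-field structure on $\tilde L$. One has to choose the lift $\tilde\tau$ together with lifts of the $C_p^{(\omega)}$-generators so that they pairwise commute on $\tilde L$ and have the correct orders (in particular, $\tilde\tau^{p}\in\tilde{\mathcal{H}}$). These splittings are available because $K^{\tau}$ is PAC by Corollary \ref{constpac} (applied to the finitely generated subgroup $\langle\tau\rangle\leqslant C_p^{(\omega+1)}$), so $\gal(K^{\tau})$ is projective; combining this projectivity with the Frattini-cover machinery of Lemma \ref{fraextbest} and Proposition \ref{fraexcolim} produces the desired $C_p^{(\omega+1)}$-action on $\tilde L$ compatible with the given $C_p^{(\omega)}$-action on $L$.
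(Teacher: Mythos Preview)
Your approach is quite different from the paper's, and the second half has genuine gaps.

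The paper's proof does not attempt to verify $C_p^{(\omega)}$-closedness of the reduct at all. Instead it exploits the ``auto-duplication'' isomorphism $C_p^{(\omega)}\cong C_p^{(\omega+1)}$: by Lemma~\ref{isoth} the hypothesis gives that $C_p^{(\omega+1)}\text{-TCF}$ exists, and then a short language-permutation trick (cyclically relabelling finitely many of the $\sigma_i$) shows that every sentence of $C_p^{(\omega)}\text{-TCF}$ already belongs to $C_p^{(\omega+1)}\text{-TCF}$. Hence any e.c.\ $C_p^{(\omega+1)}$-field models $C_p^{(\omega)}\text{-TCF}$, and is therefore an e.c.\ $C_p^{(\omega)}$-field. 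The hypothesis is used essentially.

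Your p.e.c.\ step via Proposition~\ref{udipec} is fine, but the $C_p^{(\omega)}$-closedness argument breaks down in several places. First, the reduction ``WLOG $L/K$ is finite Galois'' is not justified: a finite Galois subextension of a $C_p^{(\omega)}$-extension need not be $C_p^{(\omega)}$-stable (the orbit of a single element under the infinite group $C_p^{(\omega)}$ can be infinite), and your subsequent computation that $\tilde\sigma_i$ normalizes $\tilde{\mathcal H}$ uses $\tilde\sigma_i\mathcal H\tilde\sigma_i^{-1}=\mathcal H$, which is exactly the statement that $\sigma_i$ extends to $L$. Second, even granting all this, normalizing $\tilde{\mathcal H}$ only says that the lifts restrict to automorphisms of $\tilde L$; it does \emph{not} give the relations of $C_p^{(\omega+1)}$. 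Concretely you need $\tilde\tau^{p}\in\tilde{\mathcal H}$, $\tilde\sigma_i^{p}\in\tilde{\mathcal H}$, and all commutators $[\tilde\tau,\tilde\sigma_i],\,[\tilde\sigma_i,\tilde\sigma_j]\in\tilde{\mathcal H}$, whereas the data only give these elements in $\mathcal H$ (or in $\gal(K)$), which is strictly larger. The appeal to projectivity of $\gal(K^{\tau})$ does not close this gap: projectivity lifts homomorphisms \emph{out of} $\gal(K^{\tau})$, but what you need is a section of $1\to\gal(K)/\tilde{\mathcal H}\to\gal(K^{\tau})/\tilde{\mathcal H}\to C_p\to 1$ together with infinitely many further commutation constraints; neither Lemma~\ref{fraextbest} nor Proposition~\ref{fraexcolim} provides this, since both concern a \emph{single finite} Galois extension $C\subseteq K$. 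It is also telling that your closedness argument never invokes the hypothesis that $C_p^{(\omega)}\text{-TCF}$ exists --- in the paper this hypothesis is what makes the proof go through.
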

\begin{proof}
For each $i>0$, let $L_i:=L_{\mathrm{rings}}\cup \{\sigma_1,\ldots,\sigma_i\}$ be the language of $C_p^i$-fields. Let $L_{\omega}:=\bigcup_iL_i$ be the language of $C_p^{(\omega)}$-fields, and $L_{\omega+1}:=L_{\omega}\cup \{\sigma_0\}$ be the language of $C_p^{(\omega+1)}$-fields.

For each $n>0$, we have the following group isomorphisms:
$$s:C_p^{(\omega)}\ra C_p^{(\omega)}\times C_p,\ \ s(\sigma_i)=\sigma_{i-1};$$
$$c_n:C_p^{(\omega)}\ra C_p^{(\omega)},\ \ \sigma_0\mapsto \sigma_1,\sigma_1\mapsto \sigma_2,\ldots,\sigma_n\mapsto \sigma_{n+1},\sigma_{n+1}\mapsto \sigma_{0},\sigma_{>n+1}\mapsto \sigma_{>n+1}.$$
There are the corresponding bijections of languages:
$$L_s:L_{C_p^{(\omega)}}\ra L_{C_p^{(\omega)}\times C_p},\ \ \ \ \ \ \ \ L_{c_n}:L_{C_p^{(\omega)}}\ra L_{C_p^{(\omega)}}.$$

By Lemma \ref{isoth} and our assumption, the theory $C_p^{(\omega+1)}-\mathrm{TCF}$ exists. It is enough to show that:
$$C_p^{(\omega)}-\mathrm{TCF}\subset \left(C_p^{(\omega)}\times C_p\right)-\mathrm{TCF}.$$

Take $\psi\in C_p^{(\omega)}-\mathrm{TCF}$. Then, there is $n>0$ such that $\psi\in L_n$. By Lemma \ref{isoth}, $L_s(\psi)\in C_p^{(\omega+1)}-\mathrm{TCF}$. By Lemma \ref{isoth} again, $L_{c_n}(L_s(\psi))\in C_p^{(\omega+1)}-\mathrm{TCF}$. Since $L_{c_n}(L_s(\psi))=\psi$, the result follows.
\end{proof}
\begin{theorem}\label{nomc}
The theory $C_p^{(\omega)}-\mathrm{TCF}$ does not exist.
\end{theorem}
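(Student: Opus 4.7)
I would argue by contradiction, assuming $C_p^{(\omega)}-\mathrm{TCF}$ exists. Combining Lemma \ref{isoth} with the isomorphism $C_p^{(\omega)} \cong C_p^{(\omega+1)}$, the theory $C_p^{(\omega+1)}-\mathrm{TCF}$ also exists, so I can fix an $\aleph_0$-saturated model $\tilde K$ of it. By Proposition \ref{ecmax}, the reduct $K := \tilde K|_{L_{C_p^{(\omega)}}}$ (forgetting $\sigma_0$) is an e.c.\ $C_p^{(\omega)}$-field. Both $\tilde K$ and $K$ are strict and $\aleph_0$-saturated, so Proposition \ref{faithgal} applies to each.

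Writing $G = C_p^{(\omega+1)}$, $G' = \langle \sigma_1, \sigma_2, \ldots\rangle \leqslant G$, $C = \tilde K^{G}$, $C' = \tilde K^{G'}$, and $L = C^{\alg} \cap \tilde K$, Proposition \ref{faithgal} gives isomorphisms
\[
\alpha \colon \widehat{G} \xrightarrow{\ \sim\ } \gal(L/C), \qquad
\alpha' \colon \widehat{G'} \xrightarrow{\ \sim\ } \gal(L/C'),
\]
where we use $(C')^{\alg} = C^{\alg}$ because $C'/C$ is algebraic (its degree being controlled by the index $[\widehat{G}:\widehat{G'}] = p$). Compatibility of $\alpha$ and $\alpha'$ with the natural inclusion $G' \hookrightarrow G$ and its profinite-completion counterpart $\widehat{G'} \hookrightarrow \widehat{G}$ (whose cokernel is $C_p$) forces $[C':C] = p$, so $C' = C(b_0)$ for an element $b_0$ provided by Lemma \ref{algai} applied to $\tilde K$.

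I would then iterate: by Lemma \ref{isoth}, the reduct $K$ is again an e.c.\ $C_p^{(\omega+1)}$-field after a reindexing, so Proposition \ref{ecmax} strips off another generator, etc. This yields, for every $n \geqslant 0$, an e.c.\ $C_p^{(\omega)}$-field structure on the same underlying field whose field of constants is $C_{(n)} := \tilde K^{\langle \sigma_n, \sigma_{n+1}, \ldots\rangle}$, and an ascending tower of degree-$p$ extensions
\[
C = C_{(0)} \subsetneq C_{(1)} \subsetneq C_{(2)} \subsetneq \cdots \subseteq L,
\]
with $\gal(L/C_{(n)}) \cong C_p^{\times \omega}$ at every stage. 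Galois correspondence applied in $\widehat{G}$ via $\alpha$ shows that $\alpha^{-1}\!\bigl(\gal(L/C_{(n)})\bigr) = \{0\}^n \times C_p^{\times \omega}$, whose intersection over $n$ is trivial, so $\bigcup_n C_{(n)} = L$; equivalently, every element of $L$ is moved by only finitely many of the $\sigma_i$.

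The main obstacle, and the hardest step of the proof, is to convert this rigidity into an outright contradiction. The plan is to exploit the fact that $C_{(n)}$ carries a faithful $C_p^n$-action over $C$ via the forgotten generators $\sigma_0, \ldots, \sigma_{n-1}$, so that $C_{(n)}/C$ is forced by the full $\tilde K$-picture into a very specific Galois-theoretic type — incompatible with the purely $C_p^{(\omega)}$-theoretic description of $C_{(n)}$ provided by Proposition \ref{faithgal} applied at the $n$-th level; alternatively, one can try to use $\aleph_0$-saturation to realize an element of $\tilde K$ whose orbit structure under $G$ is inconsistent with the degree- and Frattini-cover constraints forced simultaneously by $\alpha$ at the full level and by each $\alpha'_n$ at the reduced levels.
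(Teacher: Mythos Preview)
Your proposal sets up the right machinery (Proposition~\ref{ecmax}, Proposition~\ref{faithgal}) but never reaches a contradiction: you yourself flag that ``the hardest step'' remains undone, and the vague plan in your final paragraph is not a proof. The tower $C_{(0)} \subsetneq C_{(1)} \subsetneq \cdots$ with $\bigcup_n C_{(n)} = L$ is a true statement, but it is not inconsistent with anything; knowing that every element of $L$ is moved by only finitely many $\sigma_i$ does not contradict any property of an e.c.\ $C_p^{(\omega)}$-field. Your iteration is also unnecessary: the paper works with a single extra coordinate and derives the contradiction already at the level $C_p^{(\omega)} < C_p^{(\omega+1)}$.

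The missing ingredient is Proposition~\ref{fracov}. Everything you extract from Proposition~\ref{faithgal} lives in the \emph{quotients} $\gal(L/C)$ and $\gal(L/C')$, and at that level there is no tension: $C_p^{\times\omega}$ happily sits as an index-$p$ subgroup of $C_p^{\times(\omega+1)}$. The paper instead passes to the \emph{absolute} Galois groups. Since both $\tilde K$ and its $C_p^{(\omega)}$-reduct are e.c.\ (hence closed), Proposition~\ref{fracov} applies twice and gives that the restriction maps
\[
\gal(C)\longrightarrow \gal(L/C)\cong C_p^{\times(\omega+1)},\qquad
\gal(C')\longrightarrow \gal(L/C')\cong C_p^{\times\omega}
\]
are Frattini covers (in your notation, with $C\subseteq C'$). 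Identifying these with the universal Frattini covers $\widehat{F}_{\omega+1}(p)$ and $\widehat{F}_{\omega}(p)$, one obtains $\gal(C')$ sitting inside $\gal(C)$ as the subgroup $\widehat{F}_{\omega}(p)\leqslant \widehat{F}_{\omega+1}(p)$, whose index is infinite --- contradicting $[\gal(C):\gal(C')]=[C':C]=p$. This passage from $\gal(L/-)$ to $\gal(-)$ via the Frattini cover is precisely the step you are missing.
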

\begin{proof}
Suppose that the theory $C_p^{(\omega)}-\mathrm{TCF}$ exists and we will reach a contradiction.
By Proposition \ref{ecmax}, there is a $C_p^{(\omega+1)}$-field $K$ such that
$$K\models C_p^{(\omega+1)}-\mathrm{TCF},\ \ \ \ \ K|_{L_{\omega}}\models C_p^{(\omega)}-\mathrm{TCF}.$$
Let us set:
$$C:=K^{C_p^{(\omega)}},\ \ \ \ \ C'=K^{C_p^{(\omega+1)}}.$$
Then, we have $\gal(C/C')=C_p$ and:
$$[\gal(C'):\gal(C)]=p=[\gal(C^{\alg}\cap K/C'):\gal(C^{\alg}\cap K/C)].$$
By Proposition \ref{faithgal}, we also have:
$$\gal(C^{\alg}\cap K/C')\cong C_p^{\times (\omega+1)}.$$
Hence $\gal(C^{\alg}\cap K/C)$ is a codimension one $\Ff_p$-subspace of $C_p^{\times (\omega+1)}$ and it may be identified with $C_p^{\times \omega}$.

By Lemma \ref{fracov}, we have the following isomorphism:
$$\Psi:\gal(C')\ra \widehat{F_{\omega+1}(p)},$$
and $\Psi\left(\gal(C)\right)=\widehat{F_{\omega}(p)}$, which is a contradiction, since the index $[\widehat{F_{\omega+1}(p)}:\widehat{F_{\omega}(p)}]$ is infinite.
\end{proof}

\subsection{Groups containing $\Zz\times \Zz$}\label{secudi}
Let $G$ be an arbitrary group. The notion of a p.e.c. $G$-field, which we discussed in Section \ref{pecsec}, was pointed out to us by Udi Hrushovski in an attempt to show the following.
\begin{conj}\label{zzconj}
If $G$ has a subgroup isomorphic to $\Zz\times \Zz$, then the theory $G-\mathrm{TCF}$ does not exist.
\end{conj}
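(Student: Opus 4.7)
The plan is to reduce Conjecture~\ref{zzconj} to the Hrushovski--Kikyo non-existence result for $(\Zz\times\Zz)$-TCF from \cite{Kikyo1}, by arguing that the existence of $G$-TCF would force the existence of $H$-TCF for $H=\Zz\times\Zz$. Concretely, suppose for contradiction that $G$-TCF exists and fix $M\models G\text{-TCF}$. Then $M$ is in particular a p.e.c. $G$-field, and Proposition~\ref{udipec} hands us that $M|_H$ is a p.e.c. $H$-field. The conjecture would follow if one can show that this passage to the $H$-reduct produces (or canonically embeds into) an e.c. $H$-field, in a uniform enough fashion to axiomatize $H$-TCF.

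The main technical step is to convert the p.e.c. $H$-field $M|_H$ into an e.c. $H$-field. By Remark~\ref{pecobs}(1), an e.c. $H$-field is precisely a p.e.c. $H$-field that is also $H$-closed. The natural move is therefore to apply Zorn's lemma to produce an $H$-closure $\widetilde{M}$ of $M|_H$ (an algebraic $H$-extension that is $H$-closed), which takes care of the closure condition for free. The real content is to show that $\widetilde{M}$ remains p.e.c.: given any $H$-field extension $\widetilde{M}\subseteq L$ with $L/\widetilde{M}$ a regular extension of pure fields, one needs $\widetilde{M}$ to be e.c. in $L$ for quantifier-free $L_H$-formulas. Since $\widetilde{M}/M$ is algebraic, the composite $M\subseteq L$ need not be regular, so the p.e.c.ness of $M|_H$ does not directly transfer; the plan is to factor through the separable algebraic part of $\widetilde{M}/M$ and exploit the explicit tensor-product construction from the proof of Proposition~\ref{udipec} to manufacture a regular $G$-extension of $M$ that encodes $L$, then pull back the existential witness.

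If this enlargement step succeeds, one then has to promote it to a first-order axiomatization: the $L_H$-theory $T^\ast$ of $\widetilde{M}$ would be satisfied by an e.c. $H$-field, and one would aim to show $T^\ast$ is model-complete and that every $H$-field embeds into a model of $T^\ast$, yielding $T^\ast = H\text{-TCF}$ and thus contradicting \cite{Kikyo1}. To bypass the subtlety of directly axiomatizing p.e.c., I would work with ultraproducts: the p.e.c.\ property is preserved under ultrapowers by a straightforward \L o\'s-type argument applied to the tensor-product witness of Proposition~\ref{udipec}, so one can saturate freely and use the standard criterion for model-companionability via amalgamation inside the elementary class generated by $M|_H$.

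The hard part will be the preservation of p.e.c.\ under the $H$-closure $M|_H \rightsquigarrow \widetilde{M}$, because the tensor-product construction in Proposition~\ref{udipec} uses regularity of the base extension in a crucial way, and algebraic extensions obstruct this. A fallback route, avoiding the axiomatizability issue entirely, is to import Kikyo's combinatorial obstruction directly: one identifies a specific existential $L_H$-formula $\varphi(x)$ and a canonical regular $H$-field extension $M|_H \subseteq L$ in which $\varphi$ is witnessed but whose witness cannot lie in $M|_H$ itself, thereby contradicting the p.e.c.ness of $M|_H$. This second route is conceptually cleaner but requires carefully unpacking the obstruction from \cite{Kikyo1} and checking that it can be reproduced inside an extension satisfying the regularity hypothesis needed to invoke p.e.c.
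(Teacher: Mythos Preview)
The statement you are trying to prove is labelled a \emph{conjecture} in the paper; the authors do not prove it. Immediately after stating it they explain that the natural strategy --- restrict an e.c.\ $G$-field to $H=\Zz\times\Zz$ via Proposition~\ref{udipec} and then contradict Hrushovski's result --- does not go through, and they prove only the conditional Proposition~\ref{indexpr}. Your proposal is essentially that same strategy, and it has the same gap.

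On your first route, the passage from the p.e.c.\ $H$-field $M|_H$ to an e.c.\ $H$-field $\widetilde{M}$ is not the real obstacle; the obstacle is the axiomatization step. You write that ``one would aim to show $T^\ast$ is model-complete and that every $H$-field embeds into a model of $T^\ast$'', but that sentence is a restatement of ``$H$-TCF exists'', not an argument for it. Existentially closed $H$-fields always exist; the content of Hrushovski's result is precisely that their class is not elementary, so nothing special about $\widetilde{M}$ coming from a $G$-TCF model will make $\theo(\widetilde{M})$ a model companion. Your ultrapower remark does not help: preservation of p.e.c.\ under ultrapowers (even granting it) plus amalgamation does not yield model-companionability. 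Indeed, the paper records in Remark~\ref{pecobs}(5) that whether $G$-TCF implies $H$-TCF is open, with Conjecture~\ref{zzconj} a special case. On your fallback route, the paper explains the failure explicitly: Hrushovski's obstruction (Theorem~\ref{hrthm}) is not a blanket non-existence of p.e.c.\ $(\Zz\times\Zz)$-fields; it requires the action on $\zeta_3$ to be nontrivial in a prescribed way. The $H$-action inherited from $G$ need not satisfy this --- for $G=\Qq\times\Qq$ it \emph{never} does, since any homomorphism from a $2$-divisible group to $\gal(\Qq(\zeta_3)/\Qq)\cong C_2$ is trivial --- so you cannot ``reproduce the obstruction inside a regular extension'' when the obstruction simply does not apply to the action at hand.
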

We discuss below a strategy for a proof of Conjecture \ref{zzconj}, explain where is the problem with this strategy, and give a weaker statement, which can be still proved using this strategy.

Firstly, Hrushovski's proof of the non-existence of $(\Zz\times \Zz)$-TCF (see \cite{Kikyo1} and \cite[Section 5.1]{BK}) gives a stronger result, which we formulate below.
\begin{theorem}[Hrushovski]\label{hrthm}
There is no $\aleph_0$-saturated and p.e.c $(\Zz\times \Zz)$-field $K$ such that:
\begin{itemize}
\item the primitive third root of unity $\zeta_3$ belongs to $K$;

\item we have:
$$(1,0)\cdot \zeta_3=\zeta_3^2,\ \ \ \ \ (0,1)\cdot \zeta_3=\zeta_3^2.$$
\end{itemize}
\end{theorem}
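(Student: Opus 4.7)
The plan is to derive a contradiction from the hypothesis that such a $K$ exists by using p.e.c. together with $\aleph_0$-saturation to realise algebraic witnesses inside $K$ whose coexistence is ultimately forbidden by the commutativity of $\sigma$ and $\tau$ combined with the prescribed action on $\zeta_3$.

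First, I would set $C := K^{\langle\sigma,\tau\rangle}$ and $\rho := \sigma\tau^{-1}$, and note the computation
$\rho(\zeta_3) = \sigma(\tau^{-1}(\zeta_3)) = \sigma(\zeta_3^2) = \zeta_3^4 = \zeta_3$,
so $\{\rho,\tau\}$ is an alternative $\Zz$-basis of $\langle\sigma,\tau\rangle \cong \Zz\times\Zz$ in which $\rho$ fixes $\zeta_3$ while $\tau$ does not. By Proposition \ref{udipec} and Corollary \ref{constpac} the subfield $F := K^{\sigma}$ is PAC, and $\tau$ restricts to an automorphism of $F$ (since $\tau\sigma\tau^{-1}=\sigma$). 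These observations give two complementary viewpoints---a basis change of $\langle\sigma,\tau\rangle$ that isolates the $\zeta_3$-fixing generator, and restriction to a PAC subfield on which the other generator still acts---which the rest of the argument will exploit.

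Next, the main existential construction. Consider the purely transcendental extension $K \subseteq K(x)$ equipped with the commuting action $\sigma(x) = \tau(x) = \zeta_3 x$; commutativity is verified directly by $\sigma\tau(x) = \zeta_3^2\cdot\zeta_3 x = x = \tau\sigma(x)$. Since the pure field extension is regular, p.e.c. applied to $\exists\,y\,(y\neq 0 \wedge \sigma(y) = \zeta_3 y \wedge \tau(y) = \zeta_3 y)$ produces some $a \in K^\times$ with $\sigma(a) = \tau(a) = \zeta_3 a$ and hence $a^3 \in C$. The strategy is to iterate: at each stage use p.e.c. to adjoin, via a carefully engineered regular $(\Zz\times\Zz)$-extension, a new witness whose $\sigma, \tau$-eigenvalues lie in $\mu_{3^{n+1}}$ and satisfy a recurrence dictated by commutativity and by the base data $\sigma|_{\mu_3} = \tau|_{\mu_3}$. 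By $\aleph_0$-saturation this yields increasingly rigid algebraic configurations inside $K$.

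The main obstacle is showing that this tower cannot actually be sustained. Naive Kummer iteration is blocked because an extension such as $K(\zeta_{3^n})/K$ is algebraic and hence not regular, so p.e.c. cannot itself inject higher roots of unity into $K$. The probable route around this is to design a regular $(\Zz\times\Zz)$-extension of $K$ that packages a genuine transcendental together with the algebraic data needed for a cube-root tower (for example, using the function field of a curve with $\mu_3$-action such as $y^2 = x^3 - t$ with $t$ transcendental of prescribed eigenvalue), arranged so that the composite extension is regular over $K$ while the rational points it forces into $K$ produce a parameterised family of $(\sigma,\tau)$-eigenvalue data whose required commutativity clashes with $\sigma(\zeta_3) = \tau(\zeta_3) = \zeta_3^2$. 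An alternative and possibly cleaner avenue is the Galois-theoretic one: use that $F = K^\sigma$ is PAC with $\tau \curvearrowright F$, exploit the $F$-isomorphism class of the unique quadratic extension $F(\zeta_3)/F$ arising from Lemma \ref{contepi} applied to $(K,\tau|_F)$, and show that the genericity of $\tau$ extracted from p.e.c.\ is incompatible with the prescribed swap $\tau(\zeta_3) = \zeta_3^2$. Making either route precise---combinatorial control of the Kummer tower or Galois-theoretic control of $\tau$ on the PAC field $F$---will be the technical heart of the proof.
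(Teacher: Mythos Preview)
The paper does not give its own proof of this theorem: it is stated as Hrushovski's result and the reader is referred to \cite{Kikyo1} and \cite[Section 5.1]{BK}. So there is no in-paper argument to compare against; what matters is whether your proposal actually closes.

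It does not. Your first paragraph and the construction of $a\in K^\times$ with $\sigma(a)=\tau(a)=\zeta_3 a$ via p.e.c.\ are correct and are indeed the natural opening move. But everything after that is a wish list rather than an argument, and you say so yourself (``making either route precise \ldots\ will be the technical heart of the proof''). Two concrete obstructions to the routes you sketch:
\begin{itemize}
\item The Kummer-tower idea runs into exactly the wall you identify and does not get around it. If $x_1\in K$ satisfies $x_1^3=a$, then $\sigma(x_1)/x_1$ is a cube root of $\zeta_3$, hence a primitive ninth root of unity lying in $K$. So either the tower stalls immediately (no cube root of $a$ in $K$) or $\zeta_9\in K$; neither alternative is a contradiction by itself, and your ``regular extension packaging a transcendental together with the algebraic data'' does not produce $\zeta_9$ for free, because any $(\Zz\times\Zz)$-extension in which the needed eigenvalue $\omega$ with $\omega^3=\zeta_3$ appears is already non-regular over $K$ unless $\zeta_9\in K$ to begin with.
\item The ``Galois-theoretic'' alternative via $F=K^{\sigma}$ is too vague to evaluate. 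Lemma \ref{contepi} gives information about $\gal(C^{\alg}\cap K/C)$ for $C=K^{G}$ with $G$ finitely generated, but it does not by itself produce any incompatibility with $\tau(\zeta_3)=\zeta_3^2$; you would need a much more specific mechanism, and none is supplied.
\end{itemize}

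The argument in the cited references is of a different flavour: one writes down an explicit pair of polynomial difference equations (schematically $\sigma(x)=p(x)$, $\tau(x)=q(x)$) over parameters in $K$, observes that the commutativity constraint $\sigma\tau=\tau\sigma$ forces an algebraic relation on $x$, and then shows that the solvability of the resulting system in $K$ depends on data that p.e.c.\ plus $\aleph_0$-saturation would have to decide uniformly but cannot. The prescribed action on $\zeta_3$ is what makes a specific such pair available over $K$. If you want to complete a proof, you should consult \cite[Section 5.1]{BK} (or \cite{Kikyo1}) and reproduce that computation rather than trying to push the Kummer tower.
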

The hope was that if (in the situation of Conjecture \ref{zzconj}) $K$ is an $\aleph_0$-saturated and e.c (even just p.e.c.) $G$-field, then its reduct to $\Zz\times \Zz$ would contradict Theorem \ref{hrthm} using Proposition \ref{udipec}. However, the problem is that the induced action of $\Zz\times \Zz$ on $K$ need not satisfy the conditions from Theorem \ref{hrthm}. Actually, if $G=\Qq\times \Qq$, then this induced action \emph{never} satisfies the conditions from Theorem \ref{hrthm}.

It is easy to give algebraic conditions on the group $G$ yielding the conditions from Theorem \ref{hrthm}, which we do below.
\begin{prop}\label{indexpr}
Suppose that:
\begin{itemize}
  \item there is $H\leqslant G$ such that $H\cong \Zz\times \Zz$,
  \item there is $N<G$ of index $2$ such that $H\nsubseteq N$.
\end{itemize}
Then, the theory $G-\mathrm{TCF}$ does not exist.
\end{prop}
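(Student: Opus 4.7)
The plan is to reduce to Hrushovski's Theorem \ref{hrthm} via Proposition \ref{udipec}. Assume towards contradiction that $G$-TCF exists. Under the index-$2$ hypothesis, the restriction of the quotient $G\twoheadrightarrow G/N\cong C_2$ to $H$ is a surjection $\chi:H\to C_2$, because $H\nsubseteq N$ and $G/N$ has only two elements. First I would pick, by elementary group theory on $\Zz\times\Zz$, a $\Zz$-basis $(a,b)$ of $H$ with $\chi(a),\chi(b)\neq 0$: starting from any basis $(v,w)$, if $\chi(w)=0$ replace $w$ by $v+w$, which is still a basis. I then identify $H\cong\Zz\times\Zz$ via $a\leftrightarrow(1,0)$ and $b\leftrightarrow(0,1)$.

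Next I would build a seed $G$-field. Take $F_0:=\Qq(\zeta_3)$ equipped with the $G$-action obtained as the composition
\[
G\twoheadrightarrow G/N\xrightarrow{\cong}\gal(\Qq(\zeta_3)/\Qq),
\]
so that elements of $N$ fix $\zeta_3$ and elements of $G\setminus N$ send $\zeta_3\mapsto\zeta_3^2$. Because the model companion $G$-TCF exists, the inductive theory of $G$-fields admits an existentially closed model into which $F_0$ embeds as a $G$-field. After passing to a sufficiently saturated elementary extension, this produces an $\aleph_0$-saturated e.c. $G$-field $K\supseteq F_0$.

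Finally I would invoke Proposition \ref{udipec} to conclude that $K|_{L_H}$ is a p.e.c. $H$-field; it is still $\aleph_0$-saturated, since saturation passes to reducts. By the basis choice, both $a,b\in G\setminus N$, so $(1,0)\cdot\zeta_3=a\cdot\zeta_3=\zeta_3^2$ and $(0,1)\cdot\zeta_3=b\cdot\zeta_3=\zeta_3^2$ inside $K$. Thus $K|_{L_H}$ satisfies all hypotheses of Theorem \ref{hrthm}, contradicting that theorem.

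I do not anticipate a serious obstacle in executing this plan: the basis-selection step in $\Zz\times\Zz$ is entirely elementary, and the remainder is a direct assembly of Proposition \ref{udipec} and Theorem \ref{hrthm}. The conceptual role of the subgroup $N$ is exactly to furnish, through the map $G\to G/N\cong\gal(\Qq(\zeta_3)/\Qq)$, the $\zeta_3$-twisting pattern demanded by Theorem \ref{hrthm}; the index-$2$ hypothesis guarantees that this twist can be pulled back to both generators of $H$ simultaneously.
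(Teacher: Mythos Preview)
Your proposal is correct and follows essentially the same route as the paper: assume $G$-TCF exists, use the quotient $G\to G/N\cong\gal(\Qq(\zeta_3)/\Qq)$ to make $\Qq(\zeta_3)$ a $G$-field, extend to an $\aleph_0$-saturated e.c.\ $G$-field, and then invoke Proposition~\ref{udipec} together with Theorem~\ref{hrthm} on the $H$-reduct. The only difference is that you make explicit the basis-selection step in $\Zz\times\Zz$ ensuring both generators land outside $N$, which the paper's proof glosses over when it asserts that ``the reduct of this structure to $H$ satisfies the conditions from Theorem~\ref{hrthm}''; your version is slightly more complete in this respect.
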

\begin{proof}
Assume that the theory $G-\mathrm{TCF}$ exists and we will reach a contradiction. By our assumptions, there is a group homomorphism $\varphi:G\to C_2$ such that $\varphi(H)=C_2$. Since we have
$$\gal(\Qq(\zeta_3)/\Qq)\cong C_2,$$
the above homomorphism $\varphi$ gives $\Qq(\zeta_3)$ the $G$-field structure such that the reduct of this structure to $H$ satisfies the conditions from Theorem \ref{hrthm}. Since the theory $G-\mathrm{TCF}$ exists, we can extend the $G$-field $\Qq(\zeta_3)$ to an existentially closed and $\aleph_0$-saturated $G$-field $K$, and in this case the strategy described above works giving a contradiction with Theorem \ref{hrthm}.
\end{proof}
We give below an explicit (although, looking a bit strange) statement, which easily follows from Proposition \ref{indexpr}.
%One can easily produce another statements, which are similar to Corollary \ref{gc2} below.
\begin{cor}\label{gc2}
If $\Zz\times \Zz$ embeds into $G$, then the theory $(G\times C_2)-\mathrm{TCF}$ does not exist.
\end{cor}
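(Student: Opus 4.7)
The plan is to directly apply Proposition \ref{indexpr} to the group $G\times C_2$. Thus I need to exhibit, inside $G\times C_2$, a subgroup $H'\cong \Zz\times \Zz$ together with an index-$2$ subgroup $N\leqslant G\times C_2$ such that $H'\nsubseteq N$. The obvious candidate for $N$ is $G\times\{0\}$, which clearly has index $2$ in $G\times C_2$, so the real content is in choosing the embedding of $\Zz\times \Zz$ cleverly enough that it ``sees'' the $C_2$-factor.

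The naive embedding $(a,b)\mapsto (\iota(a,b),0)$ (where $\iota:\Zz\times \Zz\hookrightarrow G$ is the given embedding) lands inside $N$, which is useless. Instead, I would use the twist
\[
\phi:\Zz\times \Zz \ra G\times C_2,\qquad \phi(a,b):=\bigl(\iota(a,b),\,a\bmod 2\bigr).
\]
This is a group homomorphism because the second coordinate is the composition of the projection $\Zz\times \Zz\to \Zz$ on the first factor with the reduction $\Zz\to C_2$. It is injective because if $\phi(a,b)=e$ then already $\iota(a,b)=e_G$ forces $(a,b)=(0,0)$ by injectivity of $\iota$. Finally, $\phi(1,0)=(\iota(1,0),1)\notin G\times \{0\}=N$, so the image $H':=\phi(\Zz\times \Zz)$ is a copy of $\Zz\times \Zz$ inside $G\times C_2$ not contained in $N$.

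Having arranged this, the hypotheses of Proposition \ref{indexpr} are satisfied for the group $G\times C_2$, and the conclusion that $(G\times C_2)$-TCF does not exist follows immediately. There is no real obstacle here beyond spotting the twisted embedding; the main point of the corollary is simply to record that adjoining an external $C_2$-factor suffices to generate an index-$2$ subgroup not containing any prescribed copy of $\Zz\times \Zz$, which is exactly what Proposition \ref{indexpr} demands in order to invoke Hrushovski's theorem (Theorem \ref{hrthm}) via the route explained just above.
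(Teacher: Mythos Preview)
Your proof is correct and follows essentially the same approach as the paper: both take $N=G\times\{0\}$ and embed $\Zz\times\Zz$ into $G\times C_2$ via a twist in the $C_2$-coordinate so that the image is not contained in $N$, then invoke Proposition~\ref{indexpr}. The only cosmetic difference is that the paper sends both generators $(1,0)$ and $(0,1)$ to elements with nontrivial $C_2$-component (so the second coordinate is $(a+b)\bmod 2$), whereas you twist only the first generator (second coordinate $a\bmod 2$); either choice works.
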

\begin{proof}
We take $G$ to play the role of $N$ from Proposition \ref{indexpr} and we also set:
$$H:=\langle ((1,0),a),((0,1),a)\rangle,$$
where we identify $\Zz\times \Zz$ with its image in $G$ and take $a$ as the generator of $C_2$.
\end{proof}

\subsection{Arbitrary Abelian groups}
It is tempting to extend Theorem \ref{superthm} to the case of an arbitrary Abelian group. However, there are the following problems.
\begin{enumerate}
\item On the negative argument side, it is not clear even how to show that the theory $(\Qq\times \Qq)-\mathrm{TCF}$ does not exist, as was pointed out in Section \ref{secudi}.

\item Regarding the positive argument side, it may be also unclear how to deal e.g. with the case of $C_{p^{\infty}}\times \Qq$. We can present this group as:
    $$C_{p^{\infty}}\times \Qq=\coli_n C_{p^n}\times \frac{1}{n!}\Zz,$$
    but, since the group $C_{p^n} \times \Zz$ is not finite, we can not use only the Galois axioms and we should also consider direct limits with respect to the Bass-Serre theory (see \cite{BK}). This may be doable, but such methods thematically do not fit to this paper and this circle of topics will be picked up elsewhere.
\end{enumerate}
%{\bf Can we say anything about \emph{locally finite nilpotent} groups??}
\emph{Acknowledgements}.

\vspace{2mm}

We are very thankful to the referee for a careful and useful report.

We would like to thank Daniel Hoffmann, Udi Hrushovski, and Nick Ramsey for their comments. The second author would like to thank the members of the model theory group in Wroc{\l}aw for their constructive remarks during model theory seminars at Wroc{\l}aw University.

 This work was partially done in the Nesin Mathematics Village (\c{S}irince,
Izmir, Turkey), and we would like to thank the village for its hospitality.
\bibliographystyle{plain}
\bibliography{harvard}

\end{document}